\DeclareMathOperator{\Aut}{Aut}
\DeclareMathOperator{\Char}{char}
\DeclareMathOperator{\cd}{cd}
\DeclareMathOperator{\Frat}{Frat}
\DeclareMathOperator{\Gal}{Gal}
\DeclareMathOperator{\Img}{Im}
\DeclareMathOperator{\Inf}{Inf}
\DeclareMathOperator{\invlim}{\varprojlim}
\DeclareMathOperator{\Ker}{Ker}
\DeclareMathOperator{\Res}{Res}
\DeclareMathOperator{\trg}{trg}
\DeclareFontFamily{U}{wncy}{}
\DeclareFontShape{U}{wncy}{m}{n}{<->wncyr10}{}
\DeclareSymbolFont{mcy}{U}{wncy}{m}{n}
\DeclareMathSymbol{\Sha}{\mathord}{mcy}{"58}
\DeclareMathSymbol{\sha}{\mathord}{mcy}{"78}
\begin{document}

\newtheorem{thm}{Theorem}[section]
\newtheorem{cor}[thm]{Corollary}
\newtheorem{lem}[thm]{Lemma}
\newtheorem{prop}[thm]{Proposition}
\newtheorem{defin}[thm]{Definition}
\newtheorem{exam}[thm]{Example}
\newtheorem{rem}[thm]{Remark}
\newtheorem{case}{\sl Case}
\newtheorem{claim}{Claim}
\newtheorem{prt}{Part}
\newtheorem*{mainthm}{Main Theorem}
\newtheorem*{thmA}{Theorem A}
\newtheorem*{thmB}{Theorem B}
\newtheorem*{thmC}{Theorem C}
\newtheorem*{thmD}{Theorem D}
\newtheorem{question}[thm]{Question}
\newtheorem*{notation}{Notation}
\newtheorem*{acknowledgment}{Acknowledgment}
\swapnumbers
\newtheorem{rems}[thm]{Remarks}
\newtheorem{examples}[thm]{Examples}

\newtheorem{questions}[thm]{Questions}
\numberwithin{equation}{section}

\newcommand{\Bock}{\mathrm{Bock}}
\newcommand{\cupdot}{\mathbin{\mathaccent\cdot\cup}}
\newcommand{\dec}{\mathrm{dec}}
\newcommand{\diam}{\mathrm{diam}}
\newcommand{\dirlim}{\varinjlim}
\newcommand{\discup}{\ \ensuremath{\mathaccent\cdot\cup}}
\newcommand{\divis}{\mathrm{div}}
\newcommand{\gr}{\mathrm{gr}}
\newcommand{\nek}{,\ldots,}
\newcommand{\ind}{\hbox{ind}}
\newcommand{\inv}{^{-1}}
\newcommand{\isom}{\cong}
\newcommand{\Massey}{\mathrm{Massey}}
\newcommand{\ndiv}{\hbox{$\,\not|\,$}}
\newcommand{\nil}{\mathrm{nil}}
\newcommand{\pr}{\mathrm{pr}}
\newcommand{\sep}{\mathrm{sep}}
\newcommand{\tagg}{^{''}}
\newcommand{\tensor}{\otimes}
\newcommand{\alp}{\alpha}
\newcommand{\gam}{\gamma}
\newcommand{\Gam}{\Gamma}
\newcommand{\del}{\delta}
\newcommand{\Del}{\Delta}
\newcommand{\eps}{\epsilon}
\newcommand{\lam}{\lambda}
\newcommand{\Lam}{\Lambda}
\newcommand{\sig}{\sigma}
\newcommand{\Sig}{\Sigma}
\newcommand{\bfA}{\mathbf{A}}
\newcommand{\bfB}{\mathbf{B}}
\newcommand{\bfC}{\mathbf{C}}
\newcommand{\bfF}{\mathbf{F}}
\newcommand{\bfP}{\mathbf{P}}
\newcommand{\bfQ}{\mathbf{Q}}
\newcommand{\bfR}{\mathbf{R}}
\newcommand{\bfS}{\mathbf{S}}
\newcommand{\bfT}{\mathbf{T}}
\newcommand{\bfZ}{\mathbf{Z}}
\newcommand{\dbA}{\mathbb{A}}
\newcommand{\dbC}{\mathbb{C}}
\newcommand{\dbF}{\mathbb{F}}
\newcommand{\dbN}{\mathbb{N}}
\newcommand{\dbQ}{\mathbb{Q}}
\newcommand{\dbR}{\mathbb{R}}
\newcommand{\dbU}{\mathbb{U}}
\newcommand{\dbZ}{\mathbb{Z}}
\newcommand{\grf}{\mathfrak{f}}
\newcommand{\gra}{\mathfrak{a}}
\newcommand{\grA}{\mathfrak{A}}
\newcommand{\grB}{\mathfrak{B}}
\newcommand{\grh}{\mathfrak{h}}
\newcommand{\grI}{\mathfrak{I}}
\newcommand{\grL}{\mathfrak{L}}
\newcommand{\grm}{\mathfrak{m}}
\newcommand{\grp}{\mathfrak{p}}
\newcommand{\grq}{\mathfrak{q}}
\newcommand{\grR}{\mathfrak{R}}
\newcommand{\calA}{\mathcal{A}}
\newcommand{\calB}{\mathcal{B}}
\newcommand{\calC}{\mathcal{C}}
\newcommand{\calE}{\mathcal{E}}
\newcommand{\calG}{\mathcal{G}}
\newcommand{\calH}{\mathcal{H}}
\newcommand{\calK}{\mathcal{K}}
\newcommand{\calL}{\mathcal{L}}
\newcommand{\calS}{\mathcal{S}}
\newcommand{\calW}{\mathcal{W}}
\newcommand{\calV}{\mathcal{V}}

\title[The Kummerian property]{The Kummerian Property and Maximal Pro-$p$ Galois Groups}
\author{Ido Efrat}
\author{Claudio Quadrelli}
\address{Department of Mathematics, Ben-Gurion University of the Negev, P.O.\  Box 653, Be'er-Sheva 84105, Israel}
\email{efrat@math.bgu.ac.il}
\address{Department of Mathematics and Applications, University of Milano-Bicocca,
Via R. Cozzi 55 -- U5, 20125 Milan, Italy}
\email{claudio.quadrelli@unimib.it}

\subjclass{Primary 12F10, Secondary 20E18, 12E30, 12G05}

\maketitle


\begin{abstract}
For a  prime number $p$, we give a new restriction on pro-$p$ groups $G$ which are realizable as the maximal pro-$p$ Galois group $G_F(p)$ for a field $F$ containing a root of unity of order $p$.
This restriction arises from Kummer Theory and the structure of the maximal $p$-radical extension of  $F$.
We study it in the abstract context of pro-$p$ groups $G$  with a continuous homomorphism $\theta\colon G\to1+p\dbZ_p$, and characterize it cohomologically, and in terms of 1-cocycles on $G$.
This is used to produce new examples of pro-$p$ groups which do not occur as maximal pro-$p$ Galois groups of fields as above.
\end{abstract}

\section{Introduction}
A major open problem in modern Galois theory is to characterize the profinite groups which are realizable as absolute Galois groups of fields.
A seemingly more approachable problem is to characterize, for a prime number $p$, the pro-$p$ groups $G$ which are realizable as the maximal pro-$p$ Galois group $G_F(p)$ of some field $F$ containing a root of unity of order $p$.

Among the known group-theoretic restrictions on pro-$p$ groups $G$ realizable as $G_F(p)$, with $F$ as above, one has Becker's  pro-$p$ version of the classical Artin--Schreier theorem:
Its finite subgroups can only be trivial or of order $2$ \cite{Becker74}.

Next, it follows from the deep results of Voevodsky and Rost
that the cohomology ring $H^*(G,\dbZ/p)$ is {\sl quadratic}, i.e., it is generated by degree $1$ elements and its relations originate from the degree $2$ part; see \S\ref{sec:not Galois}.

A more recent restriction concerns the {\sl external} cohomological structure of $G$.
Namely, for every $\varphi_1,\varphi_2,\varphi_3\in H^1(G,\dbZ/p)$, the 3-fold Massey product $\langle\varphi_1,\varphi_2,\varphi_3\rangle$ is not essential (\cite{Matzri14}, \cite{EfratMatzri16}, \cite{MinacTan16b}; see \S\ref{section on Massey products} for these notions).

In the present paper we give a new restriction on pro-$p$ groups $G$ which are realizable as $G_F(p)$, for a field $F$ containing a root of unity of order $p$.
This restriction arises from Kummer theory, specifically, from the structure of the maximal $p$-radical extension $F(\root{p^\infty}\of F)$ of $F$.
We apply this property to give new explicit examples of groups which are not realizable in this way.

More specifically, we study a property of pairs $\calG=(G,\theta)$, where $\theta$ is a continuous homomorphism from $G$ to the group $1+p\dbZ_p$ of the 1-units in $\dbZ_p$.
We assume for the moment that $\Img(\theta)$ is a torsion free subgroup of $1+p\dbZ_p$, which is always the case for $p$ odd.
For such a pair $\calG$ we define a certain canonical closed normal subgroup $K(\calG)$ of $G$, and observe that the quotient $G/K(\calG)$ decomposes as a semi-direct product $A\rtimes\bar G$, where $A$ is an abelian pro-$p$ group, $\bar G$ is isomorphic to either $\dbZ_p$ or $\{1\}$, and the action is by exponentiation by $\theta$ (see Proposition \ref{prop: calG/K}(a)).
We call $\calG$ {\sl Kummerian} if, moreover, $A$ is a free abelian pro-$p$ group.
The motivating example for this notion is Galois theoretic: For a field $F$ as above,  we let $\theta=\theta_{F,p}\colon G=G_F(p)\to1+p\dbZ_p$ be the pro-$p$ cyclotomic character.
Then Kummer theory implies that $\calG_F=(G,\theta)$ is Kummerian  (see \S\ref{sec:The Galois case} for details).
Note that in this situation the assumption that $\Img(\theta_{F,p})$ is torsion free just means that $\sqrt{-1}\in F$ if $p=2$.

The Kummerian property is intimately related to {\sl 1-cocycles}.
For example, assuming that $G$ is finitely generated, the pair $\calG=(G,\theta)$ is Kummerian if and only if $K(\calG)=\bigcap_cc\inv(\{0\})$, where the intersection is over all 1-cocycles $c\colon G\to\dbZ_p(1)_\theta$, and $\dbZ_p(1)_\theta$ is the $G$-module with underlying group $\dbZ_p$ and $G$-action induced by $\theta$ (see Theorem \ref{thm:K intersection}).
Moreover, it follows from results of Labute \cite{Labute67a} that $\calG$ is Kummerian if and only if every map $\alp\colon X\to\dbZ_p$, where $X$ is a minimal generating set of $G$, extends to a 1-cocycle $c\colon G\to\dbZ_p(1)_\theta$.
It is this latter lifting property which we use in \S\ref{sec:not Galois} to rule out various pro-$p$ groups $G$ from being maximal pro-$p$ Galois groups $G_F(p)$ as above.

Our new restriction implies the Artin--Schreier/Becker restriction on the finite subgroups of $G_F(p)$ (Remark \ref{rem:positselski}(3)), but is independent of the cohomological quadraticness restriction, as well as the  restriction on 3-fold Massey products (see \S\ref{section on Massey products}, as well as Examples \ref{exam:commutators1} and \ref{exam:commutators2}).

We further show that the class of Kummerian cyclotomic pro-$p$ pairs is closed under basic operations:
free products, and extensions by a free abelian pro-$p$ group (see Propositions \ref{prop:freeprod} and \ref{prop:semidirectprod}).

\medskip

Some of the results of this paper were presented at the BIRS workshop on ``Nilpotent Fundamental Groups" on June 2017.
Following it we were informed by J\'an Min\'a\v c about the (then) forthcoming work \cite{MinacRogelstadTan18}  on the structure of relations in $G_F(p)$, which is related to these results.

\medskip

We warmly thank Nguyen Duy T\^an for pointing out to the second-named author the possible importance of \cite{Labute67a}*{Prop.\ 6} for results as in our \S\ref{sec:not Galois}.
We thank J\'an Min\'a\v c and Thomas Weigel (the former thesis advisors of the second-named author) for other inspiring discussions on related topics.
This research was supported by the Israel Science Foundation (grant No.\ 152/13).
The second-named author was also partially supported by the BGU Center for Advanced Studies in Mathematics.


\section{Cyclotomic pro-$p$ pairs}
\label{section on cyclotomic pairs}

Let $p$ be a fixed prime number.
Let $1+p\dbZ_p$ be the group of 1-units in $\dbZ_p$.
Thus $1+p\dbZ_p\isom\dbZ_p$ for $p\neq2$, and $1+2\dbZ_2\isom(\dbZ/2)\oplus\dbZ_2$.
Following \cite{Efrat95} and \cite{Efrat98}, we define a {\sl cyclotomic pro-$p$ pair} to be a pair $\calG=(G,\theta)$ consisting of a pro-$p$ group $G$ and a continuous homomorphism $\theta\colon G\to1+p\dbZ_p$.
We say that $\calG$ is {\sl finitely generated} if $G$ is finitely generated as a pro-$p$ group.
We say that $\calG$ is {\it torsion free} if $\Img(\theta)$ is a torsion free subgroup of $1+p\dbZ_p$.
Note that this is always the case when $p\neq2$.

A {\sl morphism} $\varphi\colon (G_1,\theta_1)\to(G_2,\theta_2)$ of cyclotomic pro-$p$ pairs is a continuous homomorphism $\varphi\colon G_1\to G_2$ of the pro-$p$ groups such that $\theta_1=\theta_2\circ\varphi$.
We say that $\varphi$ is a {\sl cover} if $\varphi$ induces an isomorphism $G_1/\Frat(G_1)\to G_2/\Frat(G_2)$, where $\Frat(G_i)=G_i^p[G_i,G_i]$ denotes the Frattini subgroup of $G_i$.

\smallskip

We list some basic constructions in the category of cyclotomic pro-$p$ pairs.

\medskip

(1) \quad
For a cyclotomic pro-$p$ pair $\calG=(G,\theta)$ and a closed normal subgroup $N$ of $G$ contained in $\Ker(\theta)$ we define the {\sl quotient} $\calG/N$ to be the pair $(G/N,\bar\theta)$, where $\bar\theta\colon G/N\to1+p\dbZ_p$ is the homomorphism induced by $\theta$.

\medskip

(2) \quad
Given cyclotomic pro-$p$ pairs $\calG_1=(G_1,\theta_1)$ and $\calG_2=(G_2,\theta_2)$, the {\sl free product} $\calG_1*\calG_2$ is the pair $(G,\theta)$, where $G=G_1*_pG_2$ is the free product of $G_1,G_2$ in the category of pro-$p$ groups, and $\theta\colon G\to1+p\dbZ_p$ is the unique continuous homomorphism extending $\theta_1$ and $\theta_2$, given by the universal property of $G$.

\medskip

(3) \quad
Consider a cyclotomic pro-$p$ pair $\bar\calG=(\bar G,\bar\theta)$ and an abelian pro-$p$ group $A$, written multiplicatively.
We define the {\sl extension} $A\rtimes\bar\calG$ to be the pair $(G,\theta)$, where $G=A\rtimes\bar G$ with the action given by ${}^{\bar g}h=h^{\bar\theta(\bar g)}$ for $h\in A$ and $\bar g\in\bar G$, and where $\theta\colon A\rtimes\bar G\to1+p\dbZ_p$ is the composition of the projection $G\to\bar G$ and $\bar\theta$.
The Frattini quotient of $G=A\rtimes\bar G$ is
\begin{equation}
\label{eq:Frat quotient of extension}
G/\Frat(G)=(A/A^p)\times(\bar G/\Frat(\bar G)).
\end{equation}

\begin{lem}
\label{lem:finitely generated extension}
In the above setup,  $G$ is finitely generated if and only if both $A$ and $\bar G$ are finitely generated (as pro-$p$ groups).
\end{lem}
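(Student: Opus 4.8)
The plan is to reduce the statement entirely to Frattini quotients, via the pro-$p$ analogue of Burnside's basis theorem: a pro-$p$ group $H$ is topologically finitely generated if and only if the elementary abelian pro-$p$ group $H/\Frat(H)$ is finite, in which case $\dim_{\dbF_p}(H/\Frat(H))$ equals the minimal number of generators of $H$. First I would record the two special cases needed here. Since $A$ is abelian, $\Frat(A)=A^p$, so $A$ is finitely generated if and only if $A/A^p$ is finite; and directly from the theorem, $\bar G$ is finitely generated if and only if $\bar G/\Frat(\bar G)$ is finite.

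Next I would invoke the identity \eqref{eq:Frat quotient of extension}, namely $G/\Frat(G)=(A/A^p)\times(\bar G/\Frat(\bar G))$. A direct product of two $\dbF_p$-vector spaces is finite precisely when both factors are. Chaining the equivalences then gives exactly what is wanted: $G$ is finitely generated $\iff$ $G/\Frat(G)$ is finite $\iff$ both $A/A^p$ and $\bar G/\Frat(\bar G)$ are finite $\iff$ both $A$ and $\bar G$ are finitely generated. If one prefers an explicit argument for the easy direction, note that for topological generating sets $Y$ of $A$ and $X$ of $\bar G$, choosing lifts $\tilde X\subseteq G$ of $X$ one checks that $Y\cup\tilde X$ topologically generates $G=A\rtimes\bar G$, since the closed subgroup it generates surjects onto $\bar G$ and contains $A$; but the Frattini argument handles both directions uniformly.

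I do not expect a genuine obstacle here. The one subtlety worth flagging is in the converse direction ($G$ finitely generated $\Rightarrow$ $A$ finitely generated): one cannot argue merely that ``$A$ is a closed subgroup of a finitely generated pro-$p$ group'', because closed (even normal) subgroups of finitely generated pro-$p$ groups need not be finitely generated. It is precisely the semidirect-product structure, as encoded in the splitting \eqref{eq:Frat quotient of extension} of the Frattini quotient, that exhibits $A/A^p$ as a direct factor, hence a subquotient, of the finite group $G/\Frat(G)$, forcing $A/A^p$ to be finite.
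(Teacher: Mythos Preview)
Your proposal is correct and follows essentially the same approach as the paper: the paper's proof simply cites equation~\eqref{eq:Frat quotient of extension} together with the Frattini argument (Burnside basis theorem for pro-$p$ groups), which is exactly the reduction you carry out. Your additional remarks on the easy direction and on why one cannot merely invoke ``closed subgroup of finitely generated'' are accurate but not needed for the argument.
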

\begin{proof}
This follows from (\ref{eq:Frat quotient of extension}) and the Frattini argument \cite{NeukirchSchmidtWingberg}*{Prop.\ 3.9.1}.
\end{proof}

Note that $A'\rtimes(A\rtimes\bar\calG)\isom(A\times A')\rtimes\bar\calG$ for any abelian pro-$p$ groups $A,A'$.

Given a morphism $\bar\varphi\colon\bar\calG'\to\bar\calG$ of cyclotomic pro-$p$ pairs and a continuous homomorphism $A'\to A$ of pro-$p$ abelian groups, there is an induced morphism $\varphi\colon A'\rtimes\bar\calG'\to A\rtimes\bar\calG$.
Using (\ref{eq:Frat quotient of extension}) we obtain:

\begin{lem}
\label{lem:covers of extensions}
In the above setup, $\varphi$ is a cover if and only if $\bar\varphi$ is a cover and the induced map $A'/(A')^p\to A/A^p$ is an isomorphism.
\end{lem}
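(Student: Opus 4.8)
The plan is to deduce Lemma \ref{lem:covers of extensions} directly from the explicit description (\ref{eq:Frat quotient of extension}) of the Frattini quotient of an extension, together with the naturality of that description. First I would recall that for the two extensions $G' = A'\rtimes\bar G'$ and $G = A\rtimes\bar G$ one has, by (\ref{eq:Frat quotient of extension}),
\[
G'/\Frat(G') = (A'/(A')^p)\times(\bar G'/\Frat(\bar G')), \qquad
G/\Frat(G) = (A/A^p)\times(\bar G/\Frat(\bar G)),
\]
and that the induced morphism $\varphi\colon G'\to G$, coming from the homomorphism $A'\to A$ and the morphism $\bar\varphi\colon\bar G'\to\bar G$, respects the semidirect product structure, hence respects these direct sum decompositions. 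Concretely, $\varphi$ carries the first factor $A'/(A')^p$ into $A/A^p$ via the map induced by $A'\to A$, and the second factor $\bar G'/\Frat(\bar G')$ into $\bar G/\Frat(\bar G)$ via the map induced by $\bar\varphi$ on Frattini quotients. I would phrase this as: the square relating the Frattini quotients of $G'$ and $G$ to those of $A', A$ and $\bar G', \bar G$ commutes and is compatible with the projections onto the two direct summands.

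Granting that, the lemma becomes a triviality about maps of finite elementary abelian $p$-groups (equivalently $\dbF_p$-vector spaces) that are block-diagonal with respect to a direct sum decomposition: such a map is an isomorphism if and only if each of its two diagonal blocks is an isomorphism. The block on the $A$-summand is precisely the map $A'/(A')^p\to A/A^p$; the block on the $\bar G$-summand is the map $\bar G'/\Frat(\bar G')\to\bar G/\Frat(\bar G)$ induced by $\bar\varphi$, which by definition is an isomorphism exactly when $\bar\varphi$ is a cover. Hence $\varphi$ induces an isomorphism on Frattini quotients --- i.e.\ $\varphi$ is a cover --- if and only if $\bar\varphi$ is a cover and $A'/(A')^p\to A/A^p$ is an isomorphism, which is the claim.

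The only genuinely substantive point --- and thus the step I expect to require the most care --- is verifying that the isomorphism (\ref{eq:Frat quotient of extension}) is natural in the pair $(A,\bar\calG)$, i.e.\ that the induced map $\varphi$ really is the ``product'' of the map on $A$-parts and the map on $\bar G$-parts under these identifications. This is where one must be slightly careful about how the semidirect product action (exponentiation by $\bar\theta$) interacts with passing to the Frattini quotient: one checks that in $G/\Frat(G)$ the conjugation action of $\bar G$ on $A/A^p$ becomes trivial, since $\bar\theta(\bar g)\in 1+p\dbZ_p$ acts on $A/A^p$ as raising to a power congruent to $1$ modulo $p$, hence as the identity; this is exactly why the semidirect product degenerates to a direct product on Frattini quotients, and it is the same computation that underlies (\ref{eq:Frat quotient of extension}) itself. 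Once this compatibility is in place, the remainder is the linear-algebra observation above and requires no further work.
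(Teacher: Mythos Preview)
Your proposal is correct and follows exactly the approach the paper takes: the paper simply writes ``Using (\ref{eq:Frat quotient of extension}) we obtain'' and states the lemma, so your argument is a fleshed-out version of that one-line justification. The care you take in checking naturality of (\ref{eq:Frat quotient of extension}) is appropriate and is precisely what the paper is implicitly invoking.
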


Finally, we will make repeated use of the following observation on abelian groups.

\begin{lem}
\label{lem:Frattini quotients and isomorphisms}
Let $\varphi\colon B\to A$ be a continuous homomorphism of abelian pro-$p$ groups.
Suppose that $\varphi$ induces an isomorphism $B/B^p\to A/A^p$ of the Frattini quotients, and that $A$ is a free abelian pro-$p$ group.
Then $\varphi$ is an isomorphism.
\end{lem}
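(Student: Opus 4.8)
The plan is to reduce the statement to the two standard facts about free abelian pro-$p$ groups: that a subset of a free abelian pro-$p$ group $A$ which maps to a basis of the Frattini quotient $A/A^p$ is itself a basis (a pro-$p$ Nakayama/Burnside-basis argument), and that a continuous surjection between free abelian pro-$p$ groups of the same finite-or-infinite rank, sending basis to basis, is an isomorphism.

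First I would show $\varphi$ is surjective. Since $A$ is a free abelian pro-$p$ group, it is isomorphic to $\prod_{i\in I}\dbZ_p$ for a suitable index set $I$ (or the corresponding restricted product, depending on the topology convention in use), and $A/A^p\isom\bigoplus_{i\in I}\dbZ/p$ with the images of the standard generators forming a basis. The hypothesis that $B/B^p\to A/A^p$ is an isomorphism means $\Img(\varphi)$ surjects onto $A/A^p$, i.e.\ $\Img(\varphi)\cdot A^p=A$. Since $\varphi$ is continuous and $B$ is compact, $\Img(\varphi)$ is a closed subgroup of $A$; an open-closed subgroup containing a subgroup $H$ with $HA^p=A$ is all of $A$ by the pro-$p$ Nakayama lemma (the Frattini subgroup $A^p$ of the pro-$p$ group $A$ consists of non-generators). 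Hence $\varphi$ is surjective.

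Next I would show $\varphi$ is injective. Lift a basis: choose elements $b_i\in B$ ($i\in I$) whose images in $B/B^p$ form the basis corresponding, via the given isomorphism, to the standard basis of $A/A^p$; then $\varphi(b_i)$ generate $A$ topologically and their images in $A/A^p$ form a basis. Because $A$ is free abelian pro-$p$, the $\varphi(b_i)$ in fact form a $\dbZ_p$-basis of $A$, so the continuous homomorphism $\dbZ_p\langle I\rangle\to A$ sending the $i$-th standard generator to $\varphi(b_i)$ is an isomorphism; this factors as $\dbZ_p\langle I\rangle\to B\to A$, the first map sending generators to the $b_i$. It follows that $\varphi$ has a continuous section $s\colon A\to B$ with $s(\varphi(b_i))=b_i$, and the composite $s\circ\varphi\colon B\to B$ fixes a topological generating set, hence is the identity by continuity and the fact that the $b_i$ generate $B$ topologically (they do, by Nakayama again, since they generate $B/B^p$). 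Therefore $\varphi$ is injective as well, and thus an isomorphism.

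The main obstacle is the bookkeeping around possibly infinite rank and the precise meaning of ``free abelian pro-$p$ group'' in this paper's conventions; once one fixes that a basis lifts the Frattini basis and that $A\isom\dbZ_p\langle I\rangle$ on that basis, everything reduces to the universal property of the free abelian pro-$p$ group on a set together with two applications of the pro-$p$ Nakayama lemma (one to get surjectivity of $\varphi$, one to see that the lifted elements $b_i$ topologically generate $B$). I would therefore phrase the proof so that the appeal to Nakayama/Frattini and to the universal property of $\dbZ_p\langle I\rangle$ is explicit, citing \cite{NeukirchSchmidtWingberg} for the pro-$p$ Nakayama lemma and the Burnside basis theorem, and keep the rank-counting implicit inside the statement ``basis lifts to basis''.
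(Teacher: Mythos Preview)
The paper does not actually prove this lemma; it is stated as an ``observation'' and left to the reader. So there is no proof to compare your proposal against, only your argument to assess on its own.

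Your argument is correct. Surjectivity via the Frattini/Nakayama lemma is exactly right, and the injectivity step works: once you know the $b_i$ topologically generate $B$ and the map $\psi\colon\dbZ_p\langle I\rangle\to B$, $e_i\mapsto b_i$, exists, you have $\varphi\circ\psi$ an isomorphism, so $\psi$ is injective (hence an isomorphism), and therefore $\varphi=(\varphi\circ\psi)\circ\psi^{-1}$ is an isomorphism. Your detour through the section $s$ and the identity $s\circ\varphi=\id_B$ is fine but unnecessary; the one-line version just given is cleaner.

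The only place your write-up would benefit from tightening is the infinite-rank bookkeeping you flag yourself: to define $\psi$ you need the family $(b_i)$ to converge to $1$ in $B$, and you should say explicitly that a basis of the profinite $\dbF_p$-space $B/B^p$ can be chosen to converge to $0$ and then lifted to a convergent family in $B$. An alternative that sidesteps this entirely is to use projectivity: once $\varphi$ is surjective, the free abelian pro-$p$ group $A$ is projective in the category of abelian pro-$p$ groups, so $\varphi$ splits and $B\isom\Ker(\varphi)\times A$; then $B/B^p\isom(\Ker(\varphi)/\Ker(\varphi)^p)\times(A/A^p)$, and the hypothesis that $B/B^p\to A/A^p$ is an isomorphism forces $\Ker(\varphi)/\Ker(\varphi)^p=0$, whence $\Ker(\varphi)=1$ by Nakayama. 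This second route is probably closer in spirit to what the authors have in mind when they call the lemma an ``observation''.
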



\section{The subgroup $K(\calG)$}
\label{sec:thetacomm}

Given a cyclotomic pro-$p$ pair $\calG=(G,\theta)$, let
\[
\label{eq:thetacomm}
 K(\calG)=\Bigl\langle h^{-\theta(g)}ghg\inv \Bigm| g\in G, \ h\in\Ker(\theta)\Bigr\rangle.
\]
Thus $K(\calG)$ is the closed subgroup of $G$ with generators as stated.
Note that $K(\calG)$ is a normal subgroup of $G$, and $K(\calG)\leq\Ker(\theta)$ with $\Ker(\theta)/K(\calG)$ abelian.
In particular, we have the quotient $\calG/K(\calG)$, in the sense of \S\ref{section on cyclotomic pairs}.
If $\theta=1$, then $K(\calG)=[G,G]$ is the commutator (closed) subgroup of $G$.

Since $\Img(\theta)\subseteq 1+p\dbZ_p$, the generators $ h^{-\theta(g)}ghg\inv$ of $K(\calG)$ belong to $\Frat(G)=G^p[G,G]$,
so $K(\calG)\subseteq\Frat(G)$.
Therefore the morphism $\calG\to\calG/K(\calG)$ is a cover.

The map  $\calG\mapsto K(\calG)$ is a functor from the category of cyclotomic pro-$p$ pairs to the category of pro-$p$ groups.
The map $\calG\mapsto\calG/K(\calG)$ is a functor from the category of cyclotomic pro-$p$ pairs to itself.

\begin{exam}
\label{exam:K of Zp}
\rm
Let $\calG=(G,\theta)$ be a cyclotomic pro-$p$ pair with $G\isom\dbZ_p$.
Then $K(\calG)$ is trivial.
Indeed, if $\theta=1$, then $K(\calG)=[G,G]=\{1\}$, and else $\Ker(\theta)=\{1\}$, so all the generators of $K(\calG)$ are $1$.
\end{exam}

\begin{lem}
\label{lem:K of an extension}
Let $\bar\calG=(\bar G,\bar\theta)$ be a cyclotomic pro-$p$ pair, let $A$ be an abelian pro-$p$ group, and set $\calG=A\rtimes\bar\calG$.
Then $K(\calG)=\{1\}\times K(\bar\calG)$.
\end{lem}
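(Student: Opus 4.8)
The plan is to unwind the defining generators $h^{-\theta(g)}ghg^{-1}$ of $K(\calG)$ directly inside the semidirect product $G=A\rtimes\bar G$ and to check that each of them has the form $(1,\bar h^{-\bar\theta(\bar g)}\bar g\bar h\bar g^{-1})$ for suitable $\bar g\in\bar G$ and $\bar h\in\Ker(\bar\theta)$ — that is, it is a defining generator of $K(\bar\calG)$ placed in the second coordinate; conversely every such generator of $K(\bar\calG)$ will be seen to arise this way, and a short closure argument then gives the asserted equality.

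First I would fix notation: write elements of $G$ as pairs $(a,\bar g)$ with $a\in A$ and $\bar g\in\bar G$, so that $(a_1,\bar g_1)(a_2,\bar g_2)=(a_1a_2^{\bar\theta(\bar g_1)},\bar g_1\bar g_2)$ and $\theta(a,\bar g)=\bar\theta(\bar g)$; in particular $\Ker(\theta)=A\rtimes\Ker(\bar\theta)$ and $(a,\bar g)^{-1}=(a^{-\bar\theta(\bar g)^{-1}},\bar g^{-1})$. I would also record the auxiliary fact that an element $\bar h\in\Ker(\bar\theta)$ acts trivially on $A$ (since $a^{\bar\theta(\bar h)}=a$), whence $A\rtimes\overline{\langle\bar h\rangle}=A\times\overline{\langle\bar h\rangle}$ as topological groups, and therefore $(b,\bar h)^z=(b^z,\bar h^z)$ for every $z\in\dbZ_p$ — the left-hand exponentiation being the one in the pro-$p$ group $G$, and $b^z$ the $\dbZ_p$-power in the abelian pro-$p$ group $A$.

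The computation itself is the heart of the matter. Fixing $g=(a,\bar g)\in G$ and $h=(b,\bar h)$ with $\bar h\in\Ker(\bar\theta)$, one uses $\bar\theta(\bar g\bar h)=\bar\theta(\bar g)$ together with the commutativity of $A$ to get $ghg^{-1}=(b^{\bar\theta(\bar g)},\bar g\bar h\bar g^{-1})$; and, since $\bar h^{\bar\theta(\bar g)}$ again lies in $\Ker(\bar\theta)$, one gets $h^{\theta(g)}=(b,\bar h)^{\bar\theta(\bar g)}=(b^{\bar\theta(\bar g)},\bar h^{\bar\theta(\bar g)})$ and hence $h^{-\theta(g)}=(b^{-\bar\theta(\bar g)},\bar h^{-\bar\theta(\bar g)})$. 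Multiplying the two — using once more that $\bar\theta$ kills the second coordinate of $h^{-\theta(g)}$ — the $A$-components cancel and one is left with $h^{-\theta(g)}ghg^{-1}=(1,\bar h^{-\bar\theta(\bar g)}\bar g\bar h\bar g^{-1})$. This expression is independent of $a$ and $b$, and its second coordinate is precisely a defining generator of $K(\bar\calG)$; moreover all such generators are obtained, already by taking $a=b=1$. Since $\{1\}\times\bar G$ is a closed subgroup of $G$ carried isomorphically onto $\bar G$ by the projection $G\to\bar G$, passing to the closed subgroups generated by these (matching) generating sets yields $K(\calG)=\{1\}\times K(\bar\calG)$.

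I expect no genuine obstacle here; the only points requiring care are the bookkeeping in the non-abelian group $G$ and the treatment of the $p$-adic exponent $\bar\theta(\bar g)\in 1+p\dbZ_p$, which is legitimate exactly because each of $\bar h$, $\bar h^{\bar\theta(\bar g)}$ and $\bar g\bar h\bar g^{-1}$ lies in $\Ker(\bar\theta)$ and so acts trivially on the abelian pro-$p$ group $A$.
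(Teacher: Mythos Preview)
Your proof is correct and takes essentially the same approach as the paper: both compute the generator $h^{-\theta(g)}ghg^{-1}$ directly in $G=A\rtimes\bar G$ and show it equals $\bar h^{-\bar\theta(\bar g)}\bar g\bar h\bar g^{-1}$ (in your notation, $(1,\bar h^{-\bar\theta(\bar g)}\bar g\bar h\bar g^{-1})$), so that the generating sets of $K(\calG)$ and $\{1\}\times K(\bar\calG)$ coincide. The only difference is notational --- the paper identifies $A$ and $\bar G$ with subgroups of $G$ rather than using pair notation --- and you are somewhat more explicit in justifying the $p$-adic exponentiation step $(b,\bar h)^z=(b^z,\bar h^z)$.
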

\begin{proof}
We write $A$ multiplicatively and set $\calG=(G,\theta)$.
Then $\Ker(\theta)=A\times\Ker(\bar\theta)$.

We show that the generators of $K(\calG)$ are the same as the generators of $K(\bar\calG)$, when one identifies $\bar G$ as a subgroup of $G$.
Let $g\in G$ and $h\in \Ker(\theta)$.
 We may write $g=a\bar g$ and $h=a'\bar h$ with $a,a'\in A$, $\bar g\in\bar G$ and $\bar h\in\Ker(\bar\theta)$.
Then $\theta(g)=\bar\theta(\bar g)$ and  $\bar ga'=(a')^{\bar\theta(\bar g)}\bar g$.
Also $a$ commutes with both $a'$ and $\bar g\bar h\bar g\inv$, and $a'$ commutes with $\bar h$.
We now compute:
\[
\begin{split}
h^{-\theta(g)}ghg\inv&=h^{-\theta(g)}a\bar ga'\bar h\bar g\inv a\inv
=h^{-\theta(g)}a(a')^{\bar\theta(\bar g)}(\bar g\bar h\bar g\inv)a\inv \\
&=(a'\bar h)^{-\bar\theta(\bar g)}(a')^{\bar\theta(\bar g)}\bar g\bar h\bar g\inv =\bar h^{-\bar\theta(\bar g)}\bar g\bar h\bar g\inv,
\end{split}
\]
as desired.
\end{proof}

The subgroup $K(\calG)$ is characterized by the following minimality property:

\begin{prop}
\label{prop: calG/K}
\begin{enumerate}
\item[(a)]
When $\calG$ is torsion free,
\[
\calG/K(\calG)\isom(\Ker(\theta)/K(\calG))\rtimes(\calG/\Ker(\theta)).
\]
In particular, $K(\calG/K(\calG))=\{1\}$.
\item[(b)]
Let $\bar\calG=(\bar G,\bar\theta)$ be a cyclotomic pro-$p$ pair such that $\Ker(\bar\theta)=\{1\}$, and let $A$ be an abelian pro-$p$ group.
Then every morphism $\varphi\colon\calG\to A\rtimes\bar\calG$ of cyclotomic pro-$p$ pairs factors through $\calG/K(\calG)$.
\end{enumerate}
\end{prop}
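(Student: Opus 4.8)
The plan is to prove each part separately, using the explicit generators of $K(\calG)$ and the structural properties already established.

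\medskip

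For part (a), the statement to establish is that when $\calG$ is torsion free, the short exact sequence
\[
1\to\Ker(\theta)/K(\calG)\to G/K(\calG)\to G/\Ker(\theta)\to1
\]
splits, with the action of $G/\Ker(\theta)$ on the abelian kernel being by $\bar\theta$-exponentiation. First I would note that $\Img(\theta)$, being a torsion free closed subgroup of $1+p\dbZ_p$, is either trivial or isomorphic to $\dbZ_p$; hence $G/\Ker(\theta)$ is isomorphic to $\{1\}$ or to $\dbZ_p$. In the trivial case $\theta=1$, so $K(\calG)=[G,G]$ and the claim is immediate. In the case $G/\Ker(\theta)\isom\dbZ_p$: since $\dbZ_p$ is a free pro-$p$ group, the sequence splits, so we may pick a continuous section and write $G/K(\calG)=(\Ker(\theta)/K(\calG))\rtimes\langle\sig\rangle$ with $\sig$ a lift of a topological generator of $G/\Ker(\theta)$; the abelian quotient $\Ker(\theta)/K(\calG)$ is normal. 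The only thing to check is that conjugation by $\sig$ on $h\in\Ker(\theta)/K(\calG)$ equals $h^{\bar\theta(\sig)}$ — but this is precisely the defining relation $h^{-\theta(g)}ghg\inv\in K(\calG)$ applied with $g=\sig$. Finally, $K(\calG/K(\calG))=\{1\}$ follows from Lemma \ref{lem:K of an extension} (with $A=\Ker(\theta)/K(\calG)$ and $\bar\calG=\calG/\Ker(\theta)$, noting that $\Ker(\bar\theta)$ is trivial so $K(\bar\calG)=\{1\}$ by Example \ref{exam:K of Zp}).

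\medskip

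For part (b), let $\varphi\colon\calG\to A\rtimes\bar\calG$ be a morphism of cyclotomic pro-$p$ pairs, with $\Ker(\bar\theta)=\{1\}$. Since $K(\calG)$ is generated (topologically) by the elements $h^{-\theta(g)}ghg\inv$ for $g\in G$, $h\in\Ker(\theta)$, it suffices to show each such element lies in $\Ker(\varphi)$. Because $\varphi$ is a morphism of pairs, $\theta=\theta'\circ\varphi$ where $\theta'$ is the structure map of $A\rtimes\bar\calG$, i.e.\ the projection $A\rtimes\bar G\to\bar G$ followed by $\bar\theta$; hence $\varphi(h)\in\Ker(\theta')=A\times\Ker(\bar\theta)=A\times\{1\}=A$, so $\varphi(h)$ lies in the abelian normal subgroup $A$. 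Writing $\varphi(g)$ with its $\bar G$-component equal to some $\bar g$ with $\bar\theta(\bar g)=\theta(g)$, the semidirect product action gives $\varphi(g)\varphi(h)\varphi(g)\inv=\varphi(h)^{\bar\theta(\bar g)}=\varphi(h)^{\theta(g)}$ (the $A$-component of $\varphi(g)$ commutes with $\varphi(h)\in A$). Therefore $\varphi(h^{-\theta(g)}ghg\inv)=\varphi(h)^{-\theta(g)}\varphi(h)^{\theta(g)}=1$, so $K(\calG)\le\Ker(\varphi)$ and $\varphi$ factors through $\calG/K(\calG)$.

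\medskip

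I do not anticipate a genuine obstacle here: both parts reduce to direct computation with the generating relations of $K(\calG)$ together with the already-proven Lemma \ref{lem:K of an extension} and Example \ref{exam:K of Zp}. The one point requiring a little care is the splitting in part (a), where one must invoke that a pro-$p$ quotient isomorphic to $\dbZ_p$ (a free pro-$p$ group) yields a semidirect decomposition, and then verify that the resulting action is exactly exponentiation by $\bar\theta$ rather than merely some action; this is exactly what the defining relations of $K(\calG)$ encode, so it falls out cleanly. The torsion-freeness hypothesis is used only to guarantee that $G/\Ker(\theta)$ is free pro-$p$ (trivial or $\dbZ_p$), which is what makes the extension split.
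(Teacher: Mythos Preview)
Your proof is correct and, for part (a), follows the paper's argument essentially verbatim: torsion freeness forces $G/\Ker(\theta)$ to be $\{1\}$ or $\dbZ_p$, the extension splits, and the defining generators of $K(\calG)$ show the resulting action is $\bar\theta$-exponentiation; the vanishing of $K(\calG/K(\calG))$ then comes from Lemma~\ref{lem:K of an extension}.

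For part (b) there is a small but genuine difference in packaging. The paper dispatches it in one line via functoriality: a morphism $\varphi\colon\calG\to A\rtimes\bar\calG$ sends $K(\calG)$ into $K(A\rtimes\bar\calG)$, and Lemma~\ref{lem:K of an extension} gives $K(A\rtimes\bar\calG)=\{1\}\times K(\bar\calG)=\{1\}$ since $\Ker(\bar\theta)=\{1\}$. You instead compute directly on generators, showing $\varphi(h^{-\theta(g)}ghg^{-1})=1$ by hand. Your route is self-contained and avoids appealing to the functoriality of $K$, at the cost of essentially reproving the relevant special case of Lemma~\ref{lem:K of an extension}; the paper's route is slicker and highlights that (b) is really a formal consequence of that lemma. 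Either is perfectly adequate.
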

\begin{proof}
(a) \quad
Set $\calG=(G,\theta)$.
Since it is torsion free,  $G/\Ker(\theta)\isom\Img(\theta)$ is isomorphic to either $\dbZ_p$ or $\{1\}$.
In particular, the epimorphism $G/K(\calG)\to G/\Ker(\theta)$ splits.
Hence
\[
G/K(\calG)\isom(\Ker(\theta)/K(\calG))\rtimes(G/\Ker(\theta)),
 \]
where the action is given by $\bar g\bar h\bar g\inv=\bar h^{\theta(g)}$ for a coset  $\bar h$  of $h\in\Ker(\theta)$  in $\Ker(\theta)/K(\calG)$ and a coset $\bar g$ of $g\in G$ in $G/\Ker(\theta)$.
The first assertion follows.

The second assertion of (a)  follows from the first one and from Lemma \ref{lem:K of an extension}.

\medskip

(b) \quad
By the functoriality of $K$ and by Lemma \ref{lem:K of an extension},
\[
\varphi(K(\calG))\subseteq K(A\rtimes\bar \calG)=\{1\}\times K(\bar\calG)=\{1\}.
\qedhere
\]
\end{proof}

Motivated by Theorem \ref{thm:Kummer example} below, which is a consequence of Kummer theory, we define the following key notion:

\begin{defin}
\rm
A cyclotomic pro-$p$ pair $\calG=(G,\theta)$ is {\sl Kummerian} if $\Ker(\theta)/K(\calG)$ is a free abelian pro-$p$ group.
\end{defin}

Recall that this quotient is always an abelian pro-$p$ group.

\begin{examples}
\label{exam:Kummerian with trivial theta}
\rm
(1)  \quad
Given a pro-$p$ group $G$, the cyclotomic pair $(G,1)$ is Kummerian if and only if the abelianization
$G^{\rm ab}=G/[G,G]$ is a free abelian pro-$p$ group.

\medskip

(2) \quad
In particular,  a cyclotomic pro-$p$ pair of the form $(\dbZ/p,\theta)$ cannot be Kummerian when $p\neq2$, since necessarily $\theta=1$.

\medskip

(3) \quad
By contrast, when $p=2$ the pair $\calG=(\dbZ/2,\theta)$, where $\theta$ is the nontrivial homomorphism $\dbZ/2\to1+2\dbZ/2$, is Kummerian.
Indeed, here one has $\Ker(\theta)=K(\calG)=\{1\}$.

\medskip

(4) \quad
When $p=2$, a cyclotomic pro-$p$ pair of the form $\calG=(\dbZ/4,\theta)$ cannot be Kummerian.
Indeed, when $\theta=1$ this follows from (1).
Otherwise $\Ker(\theta)=2\dbZ/4\dbZ$, and $K(\calG)=\{0\}$, so $\Ker(\theta)/K(\calG)\isom\dbZ/2$.

\medskip

(5) \quad
Still in the case where $p=2$, a cyclotomic pro-$p$ pair  $\calG=(G,\theta)$ with $G\isom(\dbZ/2)^2$ cannot be Kummerian.
Indeed,  $\theta(g)=\pm1$ for every $g\in G$, so $K(\calG)=\{1\}$.
Moreover, since $G$ does not embed in $1+2\dbZ_2$, the kernel $\Ker(\theta)$ is nontrivial.
Being finite, it is therefore not a free abelian pro-$2$ group.

\medskip

(6) \quad A cyclotomic pro-$p$ pair $\calG=(G,\theta)$ is Kummerian if and only if $\calG/K(\calG)=(G/K(\calG),\bar\theta)$ is Kummerian.
Indeed, $\Ker(\theta)/K(\calG)=\Ker(\bar\theta)$, and by Proposition \ref{prop: calG/K}(a), $K(\calG/K(\calG))=\{1\}$.
\end{examples}

\begin{prop}
\label{prop:semidirectprod}
Let $\bar\calG$ be a cyclotomic pro-$p$ pair and let $A$ be a free abelian pro-$p$ group.
Then $\calG=A\rtimes\bar\calG$ is Kummerian if and only if $\bar\calG$ is Kummerian.
\end{prop}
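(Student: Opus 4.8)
The plan is to deduce the statement directly from Lemma~\ref{lem:K of an extension} together with standard facts about free abelian pro-$p$ groups. Write $\calG=(G,\theta)$ with $G=A\rtimes\bar G$; as observed in the proof of Lemma~\ref{lem:K of an extension} one has $\Ker(\theta)=A\times\Ker(\bar\theta)$. By that lemma, $K(\calG)=\{1\}\times K(\bar\calG)$, and since $K(\bar\calG)\leq\Ker(\bar\theta)$ this subgroup lies entirely in the second factor of $A\times\Ker(\bar\theta)$. Because $\Ker(\theta)/K(\calG)$ is abelian, dividing out the second factor yields a topological isomorphism of abelian pro-$p$ groups
\[
\Ker(\theta)/K(\calG)\;\isom\;A\times\bigl(\Ker(\bar\theta)/K(\bar\calG)\bigr).
\]

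With this identification in hand, the proposition reduces to the assertion that, for abelian pro-$p$ groups $A$ (free) and $B$, the product $A\times B$ is a free abelian pro-$p$ group if and only if $B$ is. Here I would invoke the following two facts: (i) a direct product of free abelian pro-$p$ groups is again free abelian pro-$p$ (the union of bases is a basis of a free abelian pro-$p$ group realizing the product), which handles the implication ``$B$ free $\Rightarrow A\times B$ free''; and (ii) a direct summand of a free abelian pro-$p$ group is free abelian pro-$p$ --- equivalently, these are precisely the projective objects in the category of abelian pro-$p$ groups, so direct summands of such are again of the same kind. Since $B\cong(A\times B)/A$ splits off $A\times B$, fact (ii) gives the converse implication.

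Combining the two parts: $\calG$ is Kummerian, i.e.\ $\Ker(\theta)/K(\calG)$ is free abelian pro-$p$, precisely when $\Ker(\bar\theta)/K(\bar\calG)$ is free abelian pro-$p$, i.e.\ precisely when $\bar\calG$ is Kummerian. The only point that is not pure bookkeeping is the cited stability of the class of free abelian pro-$p$ groups under direct products and passage to direct summands; I expect that to be the step warranting an explicit reference, while the rest is a routine unwinding of the definitions of $K(\calG)$ and of Kummerianity together with Lemma~\ref{lem:K of an extension}.
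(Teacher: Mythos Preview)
Your proof is correct and follows essentially the same route as the paper's: both compute $\Ker(\theta)=A\times\Ker(\bar\theta)$ and invoke Lemma~\ref{lem:K of an extension} to obtain $\Ker(\theta)/K(\calG)\isom A\times(\Ker(\bar\theta)/K(\bar\calG))$, then conclude. The paper simply asserts that ``the desired equivalence follows'' from this product decomposition, whereas you spell out the needed closure properties of free abelian pro-$p$ groups under products and direct summands; this is just added detail, not a different argument.
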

\begin{proof}
Let $\bar\calG=(\bar G,\bar\theta)$ and $\calG=(A\rtimes\bar G,\theta)$.
Then $\Ker(\theta)=A\times\Ker(\bar\theta)$.
By Lemma \ref{lem:K of an extension}, $K(\calG)=\{1\}\times K(\bar\calG)$.
Hence
\[
\Ker(\theta)/K(\calG)=A\times(\Ker(\bar\theta)/K(\bar\calG)),
\]
and the desired equivalence follows.
\end{proof}

\begin{cor}
\label{cor:K trivial}
Let $\calG=(G,\theta)$ be a torsion free cyclotomic pro-$p$ pair, with $G\neq1$.
The following conditions are equivalent:
\begin{enumerate}
\item[(a)]
$\calG$ is Kummerian and $K(\calG)=\{1\}$;
\item[(b)]
$\calG\isom A\rtimes\bar\calG$ for some free abelian pro-$p$ group $A$ and some cyclotomic pro-$p$ pair $\bar\calG=(\bar G,\bar\theta)$ with $\bar G\isom\dbZ_p$.
\end{enumerate}
\end{cor}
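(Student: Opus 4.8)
The plan is to prove the equivalence of (a) and (b) in Corollary \ref{cor:K trivial} by combining Proposition \ref{prop: calG/K}(a) with Proposition \ref{prop:semidirectprod} and Example \ref{exam:K of Zp}.

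For the implication (b)$\Rightarrow$(a): Suppose $\calG\isom A\rtimes\bar\calG$ with $A$ free abelian pro-$p$ and $\bar G\isom\dbZ_p$. First I would observe that since $\bar G\isom\dbZ_p$, the pair $\bar\calG$ is Kummerian: indeed by Example \ref{exam:K of Zp} we have $K(\bar\calG)=\{1\}$, and $\Ker(\bar\theta)$ is a closed subgroup of $\dbZ_p$, hence isomorphic to $\dbZ_p$ or $\{1\}$, in either case a free abelian pro-$p$ group. Then Proposition \ref{prop:semidirectprod} gives that $\calG=A\rtimes\bar\calG$ is Kummerian. For $K(\calG)=\{1\}$, apply Lemma \ref{lem:K of an extension}: $K(\calG)=\{1\}\times K(\bar\calG)=\{1\}\times\{1\}=\{1\}$.

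For the implication (a)$\Rightarrow$(b): Assume $\calG$ is Kummerian and $K(\calG)=\{1\}$. Since $\calG$ is torsion free, Proposition \ref{prop: calG/K}(a) applied with $K(\calG)=\{1\}$ gives a decomposition $\calG\isom\Ker(\theta)\rtimes(\calG/\Ker(\theta))$, where the action on $\Ker(\theta)$ is by exponentiation through $\theta$; set $A=\Ker(\theta)$ and $\bar G=G/\Ker(\theta)\isom\Img(\theta)$, with $\bar\theta$ the induced (injective) character. By the Kummerian hypothesis, $A=\Ker(\theta)/K(\calG)$ is a free abelian pro-$p$ group, as required. It remains to check $\bar G\isom\dbZ_p$ rather than $\bar G=\{1\}$. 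The point at which I expect the only real (if minor) subtlety is here: if $\bar G=\{1\}$, then $G=A=\Ker(\theta)$ is itself free abelian pro-$p$, but then $\theta=1$ and $K(\calG)=[G,G]=\{1\}$ poses no contradiction — so one must instead rule this case out by hand, noting that the statement (b) explicitly requires $\bar G\isom\dbZ_p$. I would handle this by observing that when $\Img(\theta)=\{1\}$ and $G\neq1$ we may still exhibit the desired form: pick any direct-sum decomposition $G\isom \dbZ_p \times A'$ of the free abelian pro-$p$ group $G$ (possible since $G\neq1$), and take $\bar\calG=(\dbZ_p,1)$ and the free abelian group $A'$; then $A'\rtimes\bar\calG$ (a direct product, since the action is trivial) is isomorphic to $\calG$ as a cyclotomic pro-$p$ pair because $\theta=1$ on both sides. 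When $\Img(\theta)\neq\{1\}$ it is a nontrivial closed subgroup of the torsion free group $1+p\dbZ_p$, hence isomorphic to $\dbZ_p$, and we are done directly. This case analysis is the main (modest) obstacle; everything else is a formal consequence of the already-established lemmas.
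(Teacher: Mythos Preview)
Your proposal is correct and follows essentially the same route as the paper: for (b)$\Rightarrow$(a) you invoke Example~\ref{exam:K of Zp}, Lemma~\ref{lem:K of an extension}, and Proposition~\ref{prop:semidirectprod} exactly as the paper does, and for (a)$\Rightarrow$(b) you use Proposition~\ref{prop: calG/K}(a) and then handle the case $\theta=1$ by splitting off a $\dbZ_p$ factor from the free abelian group $G\neq1$, just as the paper does. One small wording slip: $1+p\dbZ_p$ is not torsion free when $p=2$; what you need (and what the hypothesis gives) is that $\Img(\theta)$ is torsion free, and then the injection of $\Img(\theta)$ into $\dbZ_2$ via the second projection of $1+2\dbZ_2\isom(\dbZ/2)\oplus\dbZ_2$ shows $\Img(\theta)\isom\dbZ_p$ when nontrivial.
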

\begin{proof}
(a)$\Rightarrow$(b): \quad
By Proposition \ref{prop: calG/K}(a),
$\calG\isom A\rtimes(\calG/\Ker(\theta))$ for a free abelian pro-$p$ group $A$.
As before, either $G/\Ker(\theta)\isom\Img(\theta)\isom\dbZ_p$ or else $\theta=1$.
In the first case we are done.

In the second case, as $G=A\neq1$,  we may write $A=A'\times\dbZ_p$ for some free abelian pro-$p$ group $A'$.
Then $\calG\isom A'\rtimes(\dbZ_p,1)$.

\medskip

(b)$\Rightarrow$(a): \quad
The pair $\bar\calG$ is Kummerian, so by Proposition \ref{prop:semidirectprod}, $A\rtimes\bar\calG$ is also Kummerian.
By Example \ref{exam:K of Zp} and Lemma \ref{lem:K of an extension}, $K(A\rtimes\bar\calG)=\{1\}$.
\end{proof}

\section{The Galois case}
\label{sec:The Galois case}
Throughout this section let $F$ be a field containing a root of unity of order $p$ (in particular, $\Char(F)\neq p$).
Let $F(p)$ be the compositum of all finite Galois $p$-extensions of $F$, so $G_F(p)=\Gal(F(p)/F)$ is the maximal pro-$p$ Galois group of $F$.
Let $\mu_{p^n}$ be the group of all roots of unity of order dividing $p^n$ in the algebraic closure of $F$, and set $\mu_{p^\infty}=\bigcup_{n=1}^\infty\mu_{p^n}$.
In fact, $\mu_{p^\infty}\subseteq F(p)$.
The group $\Aut(\mu_{p^\infty}/\mu_p)$ of all automorphisms of $\mu_{p^\infty}$ fixing $\mu_p$ is isomorphic to $1+p\dbZ_p$;
More specifically, $\sig\in\Aut(\mu_{p^\infty}/\mu_p)$ corresponds to the $p$-adic unit $\lam\in1+p\dbZ_p$ such that $\sig(\zeta)=\zeta^\lam$ for every $\zeta\in\mu_{p^\infty}$.
The composition of the restriction map $G_F(p)\to\Aut(\mu_{p^\infty}/\mu_p)$ and this isomorphism is the {\sl pro-$p$ cyclotomic character}
\[
\theta_{F,p}\colon G_F(p)\longrightarrow1+p\dbZ_p.
\]

\begin{defin}
\label{def:Galois}
\rm
The {\sl cyclotomic pro-$p$ pair of the field $F$} is the pair
\[
\calG_F=(G_F(p),\theta_{F,p}).
\]
It is torsion free if and only if $p\neq2$ or else $p=2$ and $\sqrt{-1}\in F$.

Furthermore, for every Galois extension $E/F$ such that $F(\mu_{p^\infty})\subseteq E\subseteq F(p)$, one has $E(p)=F(p)$ and $G_E(p)\leq\Ker(\theta_{F,p})$.
We define the cyclotomic pro-$p$ pair of $E/F$ to be
\[
\calG(E/F)=\calG_F/G_E(p)=(\Gal(E/F),\theta_{E/F,p}),
\]
where $\theta_{E/F,p}\colon\Gal(E/F)\to1+p\dbZ_p$ is the homomorphism induced by $\theta_{F,p}$
\end{defin}

In the next theorem we restrict ourselves to torsion free pairs $\calG_F$.
Then the subgroup $K(\calG_F)$ can be naturally interpreted as in part (d) of the Theorem.
This connection was first observed and is studied in the forthcoming paper by T.\ Weigel and the second-named author \cite{QuadrelliWeigel}.

\begin{thm}
\label{thm:Kummer example}
Assume that $\sqrt{-1}$ if $p=2$ and let  $E=F(\sqrt[p^\infty]{F})$ be the field obtained by adjoining to $F$ all roots of $p$-power degree of elements of $F$.
Also let
\[
A=\Gal(E/F(\mu_{p^\infty}))=\Ker(\theta_{F,p})/G_E(p)
\]
\[
\bar\calG=\calG(F(\mu_{p^\infty})/F)=\calG_F/\Ker(\theta_{F,p}).
\]
Then:
\begin{enumerate}
\item[(a)]
$\bar\calG=\calG(E/F)/\Ker(\theta_{E/F,p})$.
\item[(b)]
$\calG(E/F)=A\rtimes\bar\calG$.
\item[(c)]
$A$ is a free abelian pro-$p$ group.
\item[(d)]
 $K(\calG_F)=G_E(p)$.
 \item[(e)]
 $\calG_E=(K(\calG_F),1)$.
 \item[(f)]
$\calG_F$ is Kummerian.
\end{enumerate}
\end{thm}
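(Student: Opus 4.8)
The plan is to establish the Galois-theoretic facts (a)--(e) more or less directly from Kummer theory, and then deduce (f) as a formal consequence of (c), (d) and the structural results of §\ref{section on cyclotomic pairs}--§\ref{sec:thetacomm}. First I would set $L=F(\mu_{p^\infty})$ and recall that, since $L\subseteq E\subseteq F(p)$ and $E/F$ is Galois, we have $E(p)=F(p)$, so $G_E(p)$ is a closed normal subgroup of $G_F(p)$ contained in $\Ker(\theta_{F,p})$; this already gives (a), because $\Gal(L/F)=\Gal(E/F)/\Gal(E/L)$ and $\Gal(E/L)=\Ker(\theta_{E/F,p})$ by construction of the induced character. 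For (b), the point is that $\Gal(E/F)\to\Gal(L/F)$ splits: over $L$ the cyclotomic character is trivial, $L$ contains $\mu_{p^\infty}$, and $\Gal(L/F)\cong\Img(\theta_{F,p})$ is either $\dbZ_p$ or trivial (here the hypothesis $\sqrt{-1}\in F$ when $p=2$ is exactly what makes $\Img(\theta_{F,p})$ torsion free, hence free pro-cyclic), so the extension of $\Gal(L/F)$ by the abelian group $A=\Gal(E/L)$ splits and the conjugation action is by $\bar\theta$-exponentiation — that is the semidirect product $A\rtimes\bar\calG$ in the sense of §\ref{section on cyclotomic pairs}(3).

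The heart of the argument is (c). Here I would invoke Kummer theory in the form: since $\mu_{p^\infty}\subseteq L$, for each $n$ the extension $L(\sqrt[p^n]{F})/L$ is abelian of exponent dividing $p^n$, classified by the subgroup $F^\times L^{\times p^n}/L^{\times p^n}$ of $L^\times/L^{\times p^n}$ via the Kummer pairing, and $E=L(\sqrt[p^\infty]{F})=\bigcup_n L(\sqrt[p^n]{F})$. Passing to the inverse limit over $n$, $A=\Gal(E/L)$ is the Pontryagin-type dual $\Hom(\Delta,\mu_{p^\infty})$ where $\Delta=\dirlim_n F^\times L^{\times p^n}/L^{\times p^n}$ is a discrete torsion (in fact $p^\infty$-torsion) abelian group, so $A$ is a profinite $\dbZ_p$-module that is torsion free: no nontrivial element of $A$ is killed by any $p^n$, because $\Delta$ is $p$-divisible as a direct limit of the multiplication-by-$p$ maps on $F^\times/(\text{torsion})$. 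A profinite $\dbZ_p$-module that is torsion free is a free $\dbZ_p$-module, i.e. a free abelian pro-$p$ group, which is (c). This is the step I expect to be the main obstacle, since it requires setting up the inverse limit of Kummer pairings carefully and checking the torsion-freeness claim at the level of the direct limit $\Delta$; one must be a little careful that the relevant radical group is $p$-divisible, and that $\mu_{p^\infty}\subseteq L$ is genuinely used so that all the Kummer extensions are defined over $L$.

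For (d), the inclusion $K(\calG_F)\subseteq G_E(p)$ follows from Proposition \ref{prop: calG/K}(b): the quotient $\calG(E/F)=A\rtimes\bar\calG$ has $\Ker(\bar\theta)=\{1\}$, so the morphism $\calG_F\to\calG(E/F)=\calG_F/G_E(p)$ factors through $\calG_F/K(\calG_F)$, i.e. $K(\calG_F)\subseteq G_E(p)$. For the reverse inclusion, I would argue that $K(\calG_F)$ is the smallest closed normal subgroup $N\leq\Ker(\theta_{F,p})$ with $\Ker(\theta_{F,p})/N$ abelian and the induced $G_F(p)$-action being $\theta$-exponentiation; concretely, $E$ is the fixed field of $K(\calG_F)$ because $\Gal(F(p)/L)/K(\calG_F)$ is abelian with the $\theta$-action, and $E$ is precisely the maximal such extension of $L$ inside $F(p)$ — equivalently, one checks that the compositum of all $\sqrt[p^n]{a}$, $a\in F^\times$, is exactly the fixed field of $K(\calG_F)$ by comparing generators $h^{-\theta(g)}ghg^{-1}$ of $K(\calG_F)$ with the relations defining the Kummer extension. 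Once (d) holds, (e) is immediate: $\calG_E=(G_E(p),\theta_{E,p})$ and $\theta_{E,p}=\theta_{F,p}|_{G_E(p)}=1$ since $\mu_{p^\infty}\subseteq E$, so $\calG_E=(K(\calG_F),1)$. Finally (f): by (d), $\Ker(\theta_{F,p})/K(\calG_F)=\Ker(\theta_{F,p})/G_E(p)=A$, which is free abelian pro-$p$ by (c); hence $\calG_F$ is Kummerian by definition. (Alternatively, (b), (c) and Proposition \ref{prop:semidirectprod} show $\calG(E/F)$ is Kummerian, and then Example \ref{exam:Kummerian with trivial theta}(6) together with $\calG(E/F)=\calG_F/K(\calG_F)$ — using (d) — gives that $\calG_F$ is Kummerian.)
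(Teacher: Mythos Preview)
Your plan for (a), (b), (e), (f) matches the paper's, and your approach to (c) via Kummer duality is essentially the paper's argument in different language (though your phrase ``$\Delta$ is $p$-divisible as a direct limit of the multiplication-by-$p$ maps on $F^\times/(\text{torsion})$'' is muddled --- the paper simply observes from the compatible Kummer pairings that any $p$-torsion element of $\invlim_n\Gal(L(\sqrt[p^n]{F})/L)$ must pair trivially at each finite level, hence vanish).

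The real gap is in (d). Your easy inclusion $K(\calG_F)\subseteq G_E(p)$ via Proposition \ref{prop: calG/K}(b) is fine. But for the reverse inclusion you assert that ``$E$ is precisely the maximal such extension of $L$ inside $F(p)$'' and propose to verify this ``by comparing generators $h^{-\theta(g)}ghg^{-1}$ of $K(\calG_F)$ with the relations defining the Kummer extension.'' That comparison only reproves the easy inclusion: checking that each generator of $K(\calG_F)$ fixes every $\sqrt[p^n]{a}$ with $a\in F^\times$ is exactly the computation underlying (b). It does \emph{not} show that an arbitrary $\sigma\in G_E(p)$ lies in $K(\calG_F)$. The maximality claim is equivalent to the statement that every $\Gal(L/F)$-invariant class in $L^\times/(L^\times)^{p^n}$ comes from $F^\times$, which is a genuine descent statement (it can be proved, using Hilbert 90 and $\cd_p(\dbZ_p)\le 1$, but you do not indicate this).

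The paper's argument for (d) is quite different and avoids this descent issue entirely. It observes that both $K(\calG_F)$ and $G_E(p)$ lie in $\Frat(G_F(p))$, so the surjection $\calG_F/K(\calG_F)\to\calG(E/F)$ is a \emph{cover}; by Lemma \ref{lem:covers of extensions} this forces the induced map $B/B^p\to A/A^p$ (where $B=\Ker(\theta_{F,p})/K(\calG_F)$) to be an isomorphism. Now (c) enters crucially: since $A$ is free abelian pro-$p$, Lemma \ref{lem:Frattini quotients and isomorphisms} upgrades this to an isomorphism $B\xrightarrow{\sim}A$, whence $K(\calG_F)=G_E(p)$. Note that your proposed route to (d) makes no use of (c), which should have been a warning sign; in the paper's argument, (c) is what makes the reverse inclusion work.
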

\begin{proof}
(a) \quad
Trivial.

\medskip

(b) \quad
The assumptions imply that $\Gal(F(\mu_{p^\infty})/F)$ is isomorphic to either $\dbZ_p$ or $1$.
Hence there is a natural semi-direct product decomposition
\[
\Gal(E/F)= A\rtimes\Gal(F(\mu_{p^\infty})/F),
\]

To compute the action, let $\sig\in\Gal(E/F)$ and $\tau\in A$.
It suffices to show that $(\sig\tau\sig\inv)(\root q\of a)=\tau^{\theta(\sig)}(\root q\of a)$ for every  $p$-power $q$ and a $q$-th root $\root q \of a$ of an element $a$ of $F^\times$, where we abbreviate $\theta=\theta_{E/F,p}$.
We may write $\sig(\root q\of a)=\zeta\root q\of a$ and $\tau(\root q\of a)=\omega\root q\of a$ for some $q$-th roots of unity $\zeta,\omega$.
Then $\sig\inv(\root q\of a)=\zeta^{-\theta(\sig\inv)}\root q\of a$, so
$(\tau\sig\inv)(\root q\of a)=\zeta^{-\theta(\sig\inv)}\omega\root q\of a$.
This implies that
\[
(\sig\tau\sig\inv)(\root q\of a)=(\zeta^{\theta(\sig)})^{-\theta(\sig\inv)}\omega^{\theta(\sig)}\zeta\root q\of a=\omega^{\theta(\sig)}\root q\of a=\tau^{\theta(\sig)}(\root q\of a).
\]
Thus $\sig\tau\sig\inv=\tau^{\theta(\sig)}$, as required.

\medskip
(c) \quad
This seems well known, but we provide a proof due to lack of reference.
Set $L=F(\mu_{p^\infty})$ and for $n\geq1$ let $T_n=F^\times/(F^\times\cap (L^\times)^{p^n})$.
Note that  $T_n$ is a $p^n$-torsion (discrete) abelian group.

Now Kummer theory \cite[Ch.\  VI, Th.\  8.1]{Lang02} gives for every $n$ a
commutative diagram of non-degenerate bilinear maps
\[
\xymatrix{
\Gal (L(\sqrt[p^{n+1}]{F})/L)\ar[d] &*-<3pc>{\times } & T_{n+1}\ar[d]\ar[r]^{ (\cdot ,\cdot )_{n+1}} &\mu _{p^{n+1}}\ar[d]^{p}\\
\Gal (L(\sqrt[p^{n}]{F})/L) & *-<3pc>{\times } & \strut T_{n}\ar[r]^{(\cdot ,\cdot )_{n}} &\strut \mu _{p^{n}},
}
\]
where $(\sigma  ,\bar a)_{n}=\sigma  (\sqrt[p^{n}]{a})/\sqrt[p^{n}]{a}$, and
similarly for $(\cdot ,\cdot )_{n+1}$. It follows that
\[
\Gal (E/L)=\invlim \Gal (L(\sqrt[p^{n}]{F})/L)
\]
is a torsion-free abelian pro-$p$ group, whence a free abelian pro-$p$ group.


(d) \quad
Write $\bar\calG=(\bar G,\bar\theta)$.
Thus $\bar G=\Gal(F(\mu_{p^\infty})/F)$ and $\Ker(\bar\theta)=\{1\}$.
By Proposition \ref{prop: calG/K}(a), $\calG_F/K(\calG_F)\isom B\rtimes\bar\calG$, where $B=\Ker(\theta_{F,p})/K(\calG_F)$.
By (c) and Proposition \ref{prop: calG/K}(b), the canonical morphism $\calG_F\to\calG(E/F)$ factors via $\calG_F/K(\calG_F)$.
We obtain a commutative square of morphisms
\[
\xymatrix{
\calG_F/K(\calG_F)\ar[r]\ar[d]_{\wr} & \calG(E/F)\ar[d]^{\wr} \\
B\rtimes\bar\calG\ar[r] & A\rtimes\bar\calG.
}
\]
Since $K(\calG_F)$ and $G_E(p)$ are both contained in $\Frat(G_F(p))$, the upper horizontal epimorphism is a cover.
Therefore so is the lower epimorphism,  and by Lemma \ref{lem:covers of extensions}, the induced map $B/B^p\xrightarrow{\sim} A/A^p$ is an isomorphism.
Since $B$ is an abelian pro-$p$ group and  $A$ is a free abelian pro-$p$ group (by (c)), Lemma \ref{lem:Frattini quotients and isomorphisms} implies that the map $B\to A$ is an isomorphism.
It follows that  $K(\calG_F)=G_E(p)$.

\medskip

(e) \quad
This follows immediately from (d).

\medskip

(f) \quad
By (d), $\Ker(\theta_{F,p})/K(\calG_F)=\Ker(\theta_{F,p})/G_E(p)=A$, and this is a free abelian pro-$p$ group, by (c).
\end{proof}

\begin{rems}
\label{rem:positselski}
\rm
(1) \quad
Let $F$ and $E$ be as in Theorem \ref{thm:Kummer example}.
In \cite{Positselski05} Positselski conjectures that $G_E(p)$ is a free pro-$p$ group.
This is a variant of a conjecture due to Bogomolov \cite{Bogomolov95}, which predicts that for every field $F$ which contains an algebraically closed subfield, the $p$-Sylow subgroup of the commutator  of the absolute Galois group $G_F$ of $F$ is a free pro-$p$ group.

\medskip

(2) \quad
Suppose that $\Char\,F\neq p$ and $F$ contains {\sl all} roots of unity of $p$-power order.
Then $\calG_F=(G_F(p),1)$.
As $\calG_F$ is Kummerian (Theorem \ref{thm:Kummer example}(f)), Example \ref{exam:Kummerian with trivial theta}(1) recovers in this case the well known fact that $G_F(p)^{\rm ab}$ is in this case a free abelian pro-$p$ group.

\medskip

(3) \quad
In view of Theorem \ref{thm:Kummer example}(f) and Examples \ref{exam:Kummerian with trivial theta}(2)(4)(5),
there are no fields $F$ containing a root of unity of order $p$ such that $G_F(p)$ is isomorphic to $\dbZ/p$ with $p$ odd, to $\dbZ/4$, or to $(\dbZ/2)^2$.
Consequently, the only  finite groups of the form $G_F(p)$, with $F$ as above, can be of order $1$ or $2$.
This recovers a result of Becker \cite{Becker74}.
As a special case one recovers the classical Artin--Scherier theorem, asserting that for a field $F$ with separable closure $F_{\rm sep}$, the degree $[F_{\rm sep}:F]$ is either $1$, $2$, or $\infty$.

\medskip

(4) \quad
Let $F$ be a field containing a root on unity of order $p$, and containing $\sqrt{-1}$ if $p=2$.
One says that $F$ is {\sl $p$-rigid} if for every $a,b\in F^\times$  with associated Kummer elements $(a)_F,(b)_F$ in $H^1(G_F(p),\dbZ/p)$, if $(a)_F\cup(b)_F=0$ in $H^2(G_F(p),\dbZ/p)$, then
$(b)_F=(a)_F^i$ for some $0\leq i\leq p-1$, or $(a)_F=0$ \cite{Ware92}.

Suppose that $F^\times/(F^\times)^p$ is finite.
By \cite{CheboluMinacQuadrelli15}*{Cor.\ 3.17}, $\calG_F$ satisfies the equivalent conditions of Corollary \ref{cor:K trivial} if and only if $F$ is a $p$-rigid field.
\end{rems}

\begin{exam}
\label{exam:Iwasawa theory example}
\rm
In the setup of Theorem \ref{thm:Kummer example}, denote $N=G_{F(\mu_{p^\infty})}(p)$, and let $L=F(\mu_{p^\infty})^{p,{\rm ab}}$ be the maximal pro-$p$ abelian extension of $F(\mu_{p^\infty})$, i.e., the fixed field in $F(p)$ of the commutator subgroup $[N,N]$.
Since the extension $E/F(\mu_{p^\infty})$ is pro-$p$ abelian, $E\subseteq L$.
The restrictions induce epimorphisms
\[
\calG_F/K(\calG_F)\to\calG(L/F)/K(\calG(L/F))\to\calG(E/F)/K(\calG(E/F)).
\]
Furthermore, $\calG(E/F)\isom \calG_F/K(\calG_F)$, so by Proposition \ref{prop: calG/K}(a), $K(\calG(E/F))=\{1\}$, implying that the above epimorphisms are injective.
Since $\calG_F$ is Kummerian, we deduce from Example \ref{exam:Kummerian with trivial theta}(6) that
\[
\calG(E/F)=\calG(F(\sqrt[p^\infty]{F})/F),  \qquad \calG(L/F)=\calG(F(\mu_{p^\infty})^{p,{\rm ab}}/F)
\]
are also Kummerian.

When $F$ is a number field, $N$ is a free pro-$p$ group \cite{NeukirchSchmidtWingberg}*{Cor.\ 8.1.18}, so $N/[N,N]$ is a free abelian pro-$p$ group.
Its structure as a $\dbZ_p[\![\Gal(F(\mu_{p^\infty})/F)]\!]$-module is important in the context of Iwasawa theory.
The $\dbZ_p[\![\Gal(F(\mu_{p^\infty})/F)]\!]$-module $N/N^p[N,N]$ is also of importance (see e.g., \cite{BarySorJardenNeftin}), however the corresponding cyclotomic pair $\calG_F/N^p[N,N]$ is not Kummerian.
We thank the referee for pointing out these connections.
\end{exam}

\section{The structure of $\Ker(\theta)/K(\calG)$}
Given closed subgroups $H_1,H_2$ of a pro-$p$ group $G$, we write $[H_1,H_2]$ for the closed subgroup of $G$ generated by all commutators $[h_1,h_2]=h_1\inv h_2\inv h_1h_2$ with $h_1\in H_1$ and $h_2\in H_2$.

\begin{lem}
\label{lem:short exact sequence}
Let $N$ be a closed normal subgroup of a pro-$p$ group $G$ such that $G/N$ is a free pro-$p$ group.
There is a split short exact sequence
\[\xymatrix@C=0.7truecm{
1\ar[r] & N/N^p[G,N]\ar[r] & G/G^p[G,G]\ar[r] & G/NG^p[G,G]\ar[r] &1.}
\]
\end{lem}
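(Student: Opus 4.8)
The plan is to use the projectivity of free pro-$p$ groups to split the extension $1\to N\to G\to G/N\to1$, and then to read off the assertion from the Frattini quotient of a semidirect product. Since $G/N$ is a free pro-$p$ group, it is a projective object in the category of pro-$p$ groups \cite{NeukirchSchmidtWingberg}, so the canonical epimorphism $G\to G/N$ splits; fixing a continuous section, we obtain $G=N\rtimes H$ with $H$ a closed subgroup mapping isomorphically onto $G/N$. Note that freeness is used only at this point — everything that follows is valid for an arbitrary semidirect product.

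The core step is the natural identification
\[
G/G^p[G,G]\;\isom\;\bigl(N/N^p[G,N]\bigr)\times\bigl(H/H^p[H,H]\bigr),
\]
in which the two factors are the images of $N$ and of $H$. To establish it I would first record that $N^p[G,N]$ is a closed normal subgroup of $G$ contained in $G^p[G,G]$, and then compute $G^{\mathrm{ab}}$. Using $G=NH$ together with the commutator identity $[xy,z]=[x,z]^y[y,z]$ and the normality of $[N,N]$ in $G$, one gets that $[G,N]$ is topologically generated by $[N,N]$ and $[H,N]$; hence the image of $N$ in $G^{\mathrm{ab}}$ is $N/[G,N]$, on which $H$ acts trivially, and the standard decomposition $(N\rtimes H)^{\mathrm{ab}}\isom\bigl(N/[G,N]\bigr)\times H^{\mathrm{ab}}$ (abelianization of a semidirect product, with the first factor the $H$-coinvariants of $N^{\mathrm{ab}}$) follows. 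Reducing modulo $p$-th powers yields the displayed formula. Finally, identifying $H/H^p[H,H]$ with $(G/N)/(G/N)^p[G/N,G/N]=G/NG^p[G,G]$ via the section, and observing that the inclusion of the first factor $N/N^p[G,N]\hookrightarrow G/G^p[G,G]$ is induced by $N\hookrightarrow G$ while the projection onto the second factor is the canonical map $G/G^p[G,G]\to G/NG^p[G,G]$, we recover exactly the short exact sequence in the statement, split by the inclusion $H\hookrightarrow G$.

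I expect the only genuine work to be the commutator bookkeeping — keeping every subgroup the closed one, and invoking the abelianization of a semidirect product as coinvariants for the conjugation action. An alternative that avoids the semidirect product altogether: apply the five-term exact sequence in continuous cohomology to $1\to N\to G\to G/N\to1$ with trivial coefficients $\dbZ/p$; since $G/N$ is free pro-$p$ one has $H^2(G/N,\dbZ/p)=0$, so
\[
0\to H^1(G/N,\dbZ/p)\xrightarrow{\Inf}H^1(G,\dbZ/p)\xrightarrow{\res}H^1(N,\dbZ/p)^{G/N}\to0
\]
is exact; Pontryagin duality turns this into the required sequence, after the identifications $H^1(G,\dbZ/p)^\vee=G/G^p[G,G]$, $H^1(G/N,\dbZ/p)^\vee=G/NG^p[G,G]$ and $\bigl(H^1(N,\dbZ/p)^{G/N}\bigr)^\vee=N/N^p[G,N]$, and it splits because it is a short exact sequence of profinite $\dbZ/p$-vector spaces. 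I would present the first proof as the main one.
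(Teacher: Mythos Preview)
Your proposal is correct, and in fact your ``alternative'' is exactly the route the paper takes: the five-term sequence with $H^2(G/N,\dbZ/p)=0$ gives surjectivity of $\Res\colon H^1(G,\dbZ/p)\to H^1(N,\dbZ/p)^G$, and the paper then dualizes via the non-degenerate substitution pairings (Pontryagin duality in disguise) to conclude that $N/N^p[G,N]\to G/G^p[G,G]$ is injective, with the splitting coming generically from the fact that the sequence lives in elementary abelian $p$-groups. Your main approach---splitting $1\to N\to G\to G/N\to1$ by projectivity and computing the Frattini quotient of the resulting semidirect product---is a genuinely different, more group-theoretic argument: it trades the cohomological machinery for explicit commutator bookkeeping, and it delivers the splitting via the concrete section $H\hookrightarrow G$ rather than as an abstract vector-space fact. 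Both proofs invoke freeness of $G/N$ at exactly one place (you to split the extension, the paper to kill $H^2$), and for pro-$p$ groups these conditions are equivalent.
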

\begin{proof}
One has $H^2(G/N,\dbZ/p)=0$ \cite{NeukirchSchmidtWingberg}*{Prop.\ 3.5.17}.
The five term sequence in cohomology \cite{NeukirchSchmidtWingberg}*{Prop.\ 1.6.7} therefore implies that the restriction map
$\Res\colon H^1(G,\dbZ/p)\to H^1(N,\dbZ/p)^G$ is surjective.
Also, the substitution maps give rise to  a commutative diagram of non-degenerate bilinear maps
(see \cite{Efrat Minac11}*{Cor.\ 2.2})
\[
\xymatrix{
G/G^p[G,G]\times H^1(G,\dbZ/p)\ar@<7ex>[d]^{\Res}\ar[r] & \dbZ/p\ar@{=}[d]\\
N/N^p[G,N]\times H^1(N,\dbZ/p)^G\ar@<7ex>[u]\ar[r]&\dbZ/p,
}
\]
where the left vertical map is induced by the inclusion $N\leq G$.
It follows that the left vertical map is injective.
The exactness of the sequence follows.
Since it consists of elementary abelian $p$-groups, it splits.
\end{proof}

\begin{prop}
\label{prop: exact sequence with M and N}
Let $\calG=(G,\theta)$ be a torsion free cyclotomic pro-$p$ pair, and set $N=\Ker(\theta)$.
Then there is a split short exact sequence of elementary abelian $p$-groups
\[\xymatrix@C=0.7truecm{
1\ar[r] &  N/K(\calG)N^p\ar[r] &  G/G^p[G,G]\ar[r] &  G/NG^p\ar[r] & 1.}
\]
\end{prop}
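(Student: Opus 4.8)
The plan is to deduce Proposition~\ref{prop: exact sequence with M and N} from Lemma~\ref{lem:short exact sequence} applied to the quotient pair $\calG/K(\calG)$, since $K(\calG)\subseteq\Frat(G)$ so that passing to this quotient does not change the Frattini quotient $G/G^p[G,G]$. Concretely, write $\calG'=\calG/K(\calG)=(G',\theta')$ with $G'=G/K(\calG)$ and $N'=\Ker(\theta')=N/K(\calG)$. By Proposition~\ref{prop: calG/K}(a), since $\calG$ is torsion free, $\calG'\isom N'\rtimes(G'/N')$ with $G'/N'\isom\Img(\theta)$ isomorphic to $\dbZ_p$ or $\{1\}$; in either case $G'/N'$ is a free pro-$p$ group.

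The key step is then to apply Lemma~\ref{lem:short exact sequence} with the pro-$p$ group $G'$ and its closed normal subgroup $N'$, which is legitimate precisely because $G'/N'$ is free pro-$p$. That lemma produces a split short exact sequence
\[
\xymatrix@C=0.7truecm{
1\ar[r] & N'/(N')^p[G',N']\ar[r] & G'/(G')^p[G',G']\ar[r] & G'/N'(G')^p[G',G']\ar[r] & 1.
}
\]
It remains to translate each of the three terms back in terms of $G$, $N$, and $K(\calG)$. For the middle term this is immediate: since $K(\calG)\subseteq G^p[G,G]$, the canonical map $G\to G'$ induces an isomorphism $G/G^p[G,G]\xrightarrow{\sim}G'/(G')^p[G',G']$. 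For the right-hand term, the preimage of $N'(G')^p[G',G']$ under $G\to G'$ is $NG^p[G,G]$, giving $G'/N'(G')^p[G',G']\isom G/NG^p[G,G]$; and since $N\supseteq[G,G]$ (as $N=\Ker(\theta)$ contains the commutator subgroup because $\Img(\theta)$ is abelian), this equals $G/NG^p$. For the left-hand term one computes the preimage of $(N')^p[G',N']$ in $N$: it is generated by $K(\calG)$, by $N^p$, and by $[G,N]$. The point to verify — and this is the one genuinely non-formal step — is that $[G,N]\subseteq K(\calG)N^p$, so that the left term simplifies to $N/K(\calG)N^p$ as claimed.

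I expect this last inclusion to be the main obstacle, though it is short. For $g\in G$ and $h\in N=\Ker(\theta)$, one has the commutator $[g,h]=g\inv h\inv g h$, and modulo $K(\calG)$ the generator $h^{-\theta(g)}g h g\inv$ of $K(\calG)$ shows that $ghg\inv\equiv h^{\theta(g)}\pmod{K(\calG)}$; conjugating appropriately one gets $g\inv h g\equiv h^{\theta(g)\inv}\pmod{K(\calG)}$, hence $[g,h]=g\inv h\inv g h\equiv h^{-\theta(g)\inv}h=h^{1-\theta(g)\inv}\pmod{K(\calG)}$. Since $\theta(g)\in1+p\dbZ_p$, the exponent $1-\theta(g)\inv$ lies in $p\dbZ_p$, so $h^{1-\theta(g)\inv}\in N^p$ (using that $N/K(\calG)$ is abelian, so $p$-th powers make sense and closed powers land in $N^p$). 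Thus $[g,h]\in K(\calG)N^p$, which gives $[G,N]\subseteq K(\calG)N^p$ after taking closures. Finally, the splitting of the sequence is inherited from the splitting in Lemma~\ref{lem:short exact sequence} (alternatively, all groups involved are elementary abelian $p$-groups, so the sequence splits automatically), completing the proof.
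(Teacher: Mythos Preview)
Your proof is correct and follows essentially the same route as the paper: apply Lemma~\ref{lem:short exact sequence} to the normal subgroup $N/K(\calG)$ of $G/K(\calG)$ (legitimate since $G/N\isom\Img(\theta)$ is free pro-$p$), then simplify the three terms using $K(\calG)\subseteq\Frat(G)$ and the key inclusion $[G,N]\subseteq K(\calG)N^p$. The only cosmetic difference is in that last step: the paper verifies the inclusion via the explicit identity $gh\inv g\inv h=\bigl(h^{-\theta(g)}ghg\inv\bigr)\inv\cdot h^{1-\theta(g)}$, whereas you argue modulo $K(\calG)$ to get $[g,h]\equiv h^{1-\theta(g)\inv}$; both exponents lie in $p\dbZ_p$, so the conclusion is the same.
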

\begin{proof}
As noted earlier, $K(\calG)\leq N$.
Since $\calG$ is torsion free, $G/N\isom\Img(\theta)$ is either $\dbZ_p$ or $\{1\}$.
Lemma \ref{lem:short exact sequence} for  the closed normal subgroup $N/K(\calG)$ of the pro-$p$ group $G/K(\calG)$ yields the exact sequence
\[\xymatrix@C=0.6truecm{
1\ar[r] &  N/K(\calG)N^p[G,N]\ar[r] &  G/K(\calG)G^p[G,G]\ar[r] &  G/NG^p\ar[r] &  1.}
\]
Moreover, for every $g\in G$ and $h\in N$ one has
\[ gh\inv g\inv h=\left(h^{-\theta(g)}ghg\inv\right)\inv\cdot h^{1-\theta(g)}\in K(\calG)N^p. \]
Therefore $[G,N]\leq K(\calG)N^p$.

Also, we have noted that $K(\calG)\leq\Frat(G)=G^p[G,G]$, and the assertion follows.
\end{proof}

\begin{lem}
\label{equal rank}
Let $\pi\colon\calG_1=(G_1,\theta_1)\to\calG_2=(G_2,\theta_2)$ be a cover of torsion free cyclotomic pro-$p$ pairs.
Then $\pi$ induces an epimorphism  $\Ker(\theta_1)/K(\calG_1)\to\Ker(\theta_2)/K(\calG_2)$ of pro-$p$ groups, which is an isomorphism on the Frattini quotients.
\end{lem}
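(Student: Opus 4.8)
The plan is to obtain the epimorphism $\bar\pi$ directly from functoriality, and then to recognize — via Proposition \ref{prop: exact sequence with M and N} — the Frattini quotient of $\Ker(\theta_i)/K(\calG_i)$ as a canonical subgroup of $G_i/\Frat(G_i)$ that the cover $\pi$ already identifies. First I would recall that a cover of pro-$p$ groups is automatically surjective (Frattini argument, \cite{NeukirchSchmidtWingberg}*{Prop.\ 3.9.1}), so $\pi\colon G_1\to G_2$ is onto. Since $\theta_1=\theta_2\circ\pi$, one has $\pi^{-1}(\Ker(\theta_2))=\Ker(\theta_1)$; combined with surjectivity of $\pi$ this shows that $\pi$ restricts to a continuous epimorphism $\Ker(\theta_1)\twoheadrightarrow\Ker(\theta_2)$ and induces an isomorphism $G_1/\Ker(\theta_1)\isom G_2/\Ker(\theta_2)$. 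By functoriality of $\calG\mapsto K(\calG)$ we have $\pi(K(\calG_1))\subseteq K(\calG_2)$, so $\pi$ descends to the required epimorphism $\bar\pi\colon\Ker(\theta_1)/K(\calG_1)\to\Ker(\theta_2)/K(\calG_2)$, establishing the first assertion.

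For the statement about Frattini quotients, write $N_i=\Ker(\theta_i)$. Since $N_i/K(\calG_i)$ is an abelian pro-$p$ group, its Frattini quotient is $N_i/K(\calG_i)N_i^p$. I would then invoke Proposition \ref{prop: exact sequence with M and N} for each $\calG_i$ — this is where the torsion-freeness hypothesis enters — which exhibits $N_i/K(\calG_i)N_i^p$, via the map induced by the inclusion $N_i\hookrightarrow G_i$, as the kernel of the canonical surjection $G_i/\Frat(G_i)\to G_i/N_iG_i^p$. Each of these three groups is functorial in the cyclotomic pair, so $\pi$ induces a commutative ladder between the split exact sequence of Proposition \ref{prop: exact sequence with M and N} for $\calG_1$ and that for $\calG_2$, whose leftmost vertical arrow is precisely $\bar\pi$ followed by passage to Frattini quotients.

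To conclude, the middle vertical arrow $G_1/\Frat(G_1)\to G_2/\Frat(G_2)$ is an isomorphism by the hypothesis that $\pi$ is a cover, and the right-hand vertical arrow $G_1/N_1G_1^p\to G_2/N_2G_2^p$ is an isomorphism as well, being the induced map on Frattini quotients of the isomorphism $G_1/N_1\isom G_2/N_2$ found above. Restricting the middle isomorphism to the kernels of the two horizontal surjections then identifies the left-hand vertical arrow as an isomorphism, which is exactly the assertion that $\bar\pi$ is an isomorphism on Frattini quotients. The argument is essentially formal once Proposition \ref{prop: exact sequence with M and N} is in hand; the one point requiring care is to check that the vertical maps in the ladder are the natural ones induced by $\pi$ — so that the squares commute and the left vertical arrow genuinely is $\bar\pi$ modulo Frattini — which comes down to unwinding the functorial identifications made in Proposition \ref{prop: exact sequence with M and N}.
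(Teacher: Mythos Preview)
Your argument is correct and follows essentially the same route as the paper: both establish the epimorphism via functoriality of $K$ and the surjection $N_1\twoheadrightarrow N_2$, then invoke Proposition~\ref{prop: exact sequence with M and N} to build the commutative ladder of split short exact sequences and deduce the leftmost vertical map is an isomorphism from the other two. The only cosmetic difference is that the paper phrases the last step as ``by the snake lemma'' whereas you restrict the middle isomorphism to kernels directly; these are equivalent here.
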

\begin{proof}
For $i=1,2$ we denote $N_i=\Ker(\theta_i)$ and recall that $K(\calG_i)\leq N_i$.
It is straightforward to show that $\pi$ induces an epimorphism $N_1\to N_2$ of pro-$p$ groups.
By the functoriality of $K$, it further induces an epimorphism $N_1/K(\calG_1)\to N_2/K(\calG_2)$ of abelian pro-$p$ groups.

Moreover,  $\pi$ induces a group isomorphism $G_1/N_1\isom G_2/N_2\ (\leq\dbZ_p)$.
Therefore, and in view of  Proposition \ref{prop: exact sequence with M and N}, $\pi$ induces the following commutative diagram of elementary abelian $p$-groups:
\[
\xymatrix@C=0.6truecm{
1\ar[r] & N_1/K(\calG_1)N_1^p\ar[r]\ar[d] & G_1/G_1^p[G_1,G_1]\ar[r]\ar[d] & G_1/N_1G_1^p\ar[r]\ar[d] &1\\
1\ar[r] & N_2/K(\calG_2)N_2^p\ar[r] & G_2/G_2^p[G_2,G_2]\ar[r] & G_2/N_2G_2^p\ar[r] &1.\\
}
\]
Since $\pi$ is a cover, the middle vertical map is an isomorphism.
The right vertical map is an isomorphism since $G_i/N_iG_i^p$ is the Frattini quotient of $G_i/N_i$, $i=1,2$.
By the snake lemma, the left vertical map is also an isomorphism, as required.
\end{proof}

\begin{lem}
\label{lem: bar pi}
For $i=1,2$ let $\calG_i=(G_i,\theta_i)$ be a finitely generated torsion free cyclotomic pro-$p$ pair, and set $N_i=\Ker(\theta_i)$.
Assume that there are continuous isomorphisms
\[
N_1/K(\calG_1)N_1^p\isom N_2/K(\calG_2)N_2^p,
 \qquad
G_1/N_1G_1^p\isom G_2/N_2G_2^p.
\]
Assume further that $G_1$ is a free pro-$p$ group.
Then there is an epimorphism $\pi\colon G_1/K(\calG_1)\to G_2/K(\calG_2)$  which induces the above isomorphisms,  and maps $N_1/K(\calG_1)$ onto $N_2/K(\calG_2)$.
\end{lem}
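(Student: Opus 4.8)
The plan is to produce $\pi$ by lifting a suitable isomorphism of Frattini quotients, using the projectivity of the free pro-$p$ group $G_1$, and then descending modulo $K(\calG_1)$; the stated properties of $\pi$ then come out formally.

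First I would combine the two hypothesised isomorphisms into a single isomorphism of Frattini quotients. By Proposition~\ref{prop: exact sequence with M and N}, for $i=1,2$ there is a split short exact sequence of elementary abelian $p$-groups
\[
1\to N_i/K(\calG_i)N_i^p\to G_i/G_i^p[G_i,G_i]\to G_i/N_iG_i^p\to 1 .
\]
Fixing splittings, these identify $G_i/\Frat(G_i)$ with $\bigl(N_i/K(\calG_i)N_i^p\bigr)\oplus\bigl(G_i/N_iG_i^p\bigr)$, and I would let $\bar\pi\colon G_1/\Frat(G_1)\to G_2/\Frat(G_2)$ be the direct sum of the two given isomorphisms relative to these decompositions. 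Then $\bar\pi$ is an isomorphism that restricts to the first given isomorphism on the subgroup $N_1/K(\calG_1)N_1^p$ and induces the second on the quotient $G_1/N_1G_1^p$.

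Since $K(\calG_2)\subseteq\Frat(G_2)$ (see \S\ref{sec:thetacomm}), the canonical epimorphism $q\colon G_2/K(\calG_2)\to G_2/\Frat(G_2)$ is precisely the Frattini quotient map of $G_2/K(\calG_2)$. As $G_1$ is a free, hence projective, pro-$p$ group, the composite $G_1\twoheadrightarrow G_1/\Frat(G_1)\xrightarrow{\bar\pi}G_2/\Frat(G_2)$ lifts through $q$ to a continuous homomorphism $\tilde\pi\colon G_1\to G_2/K(\calG_2)$. The crucial point — and the step I expect to be the main obstacle — is to arrange that this lift be a \emph{morphism of cyclotomic pro-$p$ pairs} $\calG_1\to\calG_2/K(\calG_2)$, i.e.\ that $\bar\theta_2\circ\tilde\pi=\theta_1$, where $\bar\theta_2$ is the character induced on $G_2/K(\calG_2)$. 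For this one exploits, besides the freeness of $G_1$, that $\bar\pi$ matches up the ``$\theta$-directions'' $G_i/N_iG_i^p$ (the images of $\Img(\theta_i)$ in the Frattini quotients) together with the semidirect decomposition $\calG_2/K(\calG_2)\cong\bigl(N_2/K(\calG_2)\bigr)\rtimes\bigl(G_2/N_2\bigr)$ from Proposition~\ref{prop: calG/K}(a); concretely, $\theta_1$ factors through $G_1/N_1$, and one chooses the lift compatibly with the projection of $G_2/K(\calG_2)$ onto the factor $G_2/N_2$.

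Granting such a $\tilde\pi$, the remaining assertions follow formally. By functoriality of $\calG\mapsto K(\calG)$ together with Proposition~\ref{prop: calG/K}(a), $\tilde\pi\bigl(K(\calG_1)\bigr)\subseteq K\bigl(\calG_2/K(\calG_2)\bigr)=\{1\}$, so $\tilde\pi$ factors as $G_1\twoheadrightarrow G_1/K(\calG_1)\xrightarrow{\pi}G_2/K(\calG_2)$. Since $K(\calG_1)\subseteq\Frat(G_1)$, the map $\pi$ induces $\bar\pi$ on Frattini quotients, which is an isomorphism, so $\pi$ is an epimorphism by the Frattini argument. As $\tilde\pi$ is a morphism of cyclotomic pairs, $\tilde\pi(N_1)=\tilde\pi(\Ker\theta_1)\subseteq\Ker\bar\theta_2=N_2/K(\calG_2)$, so $\pi$ carries $N_1/K(\calG_1)$ into $N_2/K(\calG_2)$; the homomorphism it induces on Frattini quotients is the first given isomorphism, hence onto, and since $N_2/K(\calG_2)$ is finitely generated (by Lemma~\ref{lem:finitely generated extension}, as $\calG_2$ is finitely generated), another application of the Frattini argument gives that $\pi$ maps $N_1/K(\calG_1)$ onto $N_2/K(\calG_2)$. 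Finally $\pi$ induces the two given isomorphisms by the construction of $\bar\pi$.
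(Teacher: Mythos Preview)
Your overall strategy---combine the two given isomorphisms into a single isomorphism $\bar\pi$ of Frattini quotients via the split exact sequences of Proposition~\ref{prop: exact sequence with M and N}, then lift using the freeness of $G_1$---is exactly the paper's. The difference lies in how the lift is constructed and how one arranges for it to descend modulo $K(\calG_1)$.

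The step you flag as the ``main obstacle'' is indeed the crux, and your proposed resolution has a genuine gap. Demanding that $\tilde\pi$ be a morphism of cyclotomic pairs means $\bar\theta_2\circ\tilde\pi=\theta_1$, which forces $\Img(\theta_1)\subseteq\Img(\bar\theta_2)=\Img(\theta_2)$. Nothing in the hypotheses guarantees this: only the Frattini quotients $G_i/N_iG_i^p$ are assumed isomorphic, not the images $\Img(\theta_i)\subseteq 1+p\dbZ_p$ themselves. So in general no morphism of pairs $\calG_1\to\calG_2/K(\calG_2)$ exists, and your argument, as written, cannot be completed along the line you sketch.

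The paper bypasses this by being concrete rather than abstract. Instead of invoking projectivity and then trying to adjust an arbitrary lift, it chooses minimal generating sets $\bar X_i=\bar X_i'\cupdot\bar X_i''$ of $G_i/\Frat(G_i)$ adapted to the split exact sequences (so that $\bar X_i'$ is a basis of $N_i/K(\calG_i)N_i^p$ and $\bar X_i''$ maps bijectively to a basis of $G_i/N_iG_i^p$), lifts them to $\hat X_i'\subseteq N_i$ and $\hat X_i''\subseteq G_i$, and then defines $\hat\pi\colon G_1\to G_2$ on the free basis $\hat X_1$ by sending it to $\hat X_2$. The point is that $\hat X_1'$ is chosen \emph{inside} $N_1$ and is sent to $\hat X_2'\subseteq N_2$; this explicit bookkeeping is what lets the paper track $N_1/K(\calG_1)$ directly (its image $X_1'$ generates $N_1/K(\calG_1)$ by the Frattini argument, and $\pi(X_1')=X_2'$ generates $N_2/K(\calG_2)$). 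Your abstract projectivity lift gives no such control over where $N_1$ lands, which is precisely why you are forced into the problematic morphism-of-pairs requirement.
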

\begin{proof}
For $i=1,2$, since $\calG_i$ is torsion free, $G_i/N_i$ is either $\dbZ_p$ or $\{1\}$, whence is a free pro-$p$ group.
Proposition \ref{prop: exact sequence with M and N} gives a split short exact sequence of elementary abelian $p$-groups
\[
\xymatrix@C=0.6truecm{
1\ar[r] &  N_i/K(\calG_i)N_i^p\ar[r] &  G_i/G_i^p[G_i,G_i]\ar[r] & G_i/N_iG_i^p\ar[r] & 1.}
\]
Thus
\[
G_i/G_i^p[G_i,G_i]\isom (N_i/K(\calG_i)N_i^p)\oplus(G_i/N_iG_i^p).
\]
Therefore the isomorphisms in the assumptions of the lemma combine to an isomorphism $\bar\pi$ which makes the following diagram commutative with exact rows:
\[ \xymatrix@C=0.6truecm{
1\ar[r] & N_1\ar[r]\ar@{->>}[d]  & G_1\ar[r]\ar@{->>}[d] & G_1/N_1\ar[r]\ar@{=}[d]&1 \\
1\ar[r] & N_1/K(\calG_1)\ar[r]\ar@{->>}[d] & G_1/K(\calG_1)\ar[r]\ar@{->>}[d]& G_1/N_1\ar[r]\ar@{->>}[d]&1 \\
1\ar[r] & N_1/K(\calG_1)N_1^p\ar[r]\ar[d]^{\wr} & G_1/G_1^p[G_1,G_1]\ar[r]\ar[d]^{\wr}_{\bar\pi}& G_1/N_1G_1^p\ar[r]\ar[d]^{\wr}&1 \\
1\ar[r] & N_2/K(\calG_2)N_2^p\ar[r] & G_2/G_2^p[G_2,G_2]\ar[r]& G_2/N_2G_2^p\ar[r]&1 \\
1\ar[r] & N_2/K(\calG_2)\ar[r]\ar@{->>}[u] & G_2/K(\calG_2)\ar[r]\ar@{->>}[u]& G_2/N_2\ar[r]\ar@{->>}[u]&1 \\
1\ar[r] & N_2\ar[r]\ar@{->>}[u]  & G_2\ar[r]\ar@{->>}[u] & G_2/N_2\ar[r]\ar@{=}[u]&1. \\
}
\]

Choose a minimal generating subset $\bar X'_1$ of $N_1/K(\calG_1)N_1^p$, as well as a subset $\bar X_1''$ of $G_1^p/G_1[G_1,G_1]$ which is mapped bijectively onto a minimal generating subset of  $G_1/N_1G_1^p$.
The sets $\bar X_1', \bar X_1''$ correspond under $\bar\pi$ to subsets  $\bar X'_2,\bar X_2''$ of $N_2/K(\calG_2)N_2^p$, $G_2/G_2^p[G_2,G_2]$, respectively, with analogous properties.
For $i=1,2$,  the union $\bar X_i=\bar X_i'\cupdot\bar X_i''$ is a minimal generating subset of $G_i/G_i^p[G_i,G_i]$.
We lift $\bar X_i',\bar X_i''$ to subsets $\hat X'_i,\hat X''_i$ of $N_i$, $G_i$, respectively.
By the Frattini argument, $\hat X_i=\hat X_i'\cupdot \hat X_i''$ is a minimal generating subset of $G_i$.

Also let $X_i'$ be the image of $\hat X_i'$ in $N_i/K(\calG_1)$.
A second application of the Frattini argument shows that $X'_i$ generates $N_i/K(\calG_1)$.

Since $G_1$ is free, there is a unique continuous homomorphism $\hat\pi\colon G_1\to G_2$ which maps $\hat X_1$ bijectively onto $\hat X_2$ under the above correspondences.
It induces the isomorphism $\bar\pi$ on the Frattini quotients, as well as a continuous homomorphism
$\pi\colon G_1/K(\calG_1)\to G_2/K(\calG_2)$.
Since $\hat X_2$ generates $G_2$, the homomorphism $\hat\pi$ is onto $G_2$, and therefore $\pi$ is onto $G_2/K(\calG_2)$.
Since $\pi$ maps $X_1'$ onto $X_2'$ we have  $\pi(N_1/K(\calG_1))=N_2/K(\calG_2)$.
\end{proof}

\begin{prop}
\label{prop:modulo K}
Let $\calS=(S,\hat\theta)$ be a torsion free  cyclotomic pro-$p$ pair with $S$ a finitely generated free pro-$p$ group.
Then $\calS$ is Kummerian.
\end{prop}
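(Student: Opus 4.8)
The plan is to compare $\calS$ with an explicitly constructed Kummerian pair having the same Frattini-level invariants, and to use the freeness of $S$ to transport Kummerianity back via the lemmas of this section. Write $\calS=(S,\hat\theta)$, put $N=\Ker(\hat\theta)$, and let $d$ be the minimal number of generators of $S$. Since $\calS$ is torsion free, $\bar G:=\Img(\hat\theta)$ is isomorphic to $\dbZ_p$ or to $\{1\}$, and with the inclusion $\iota\colon\bar G\hookrightarrow 1+p\dbZ_p$ the pair $\bar\calG:=(\bar G,\iota)$ is a cyclotomic pro-$p$ pair with $\Ker(\iota)=\{1\}$, hence Kummerian. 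As $\hat\theta$ maps $S$ onto $\bar G$, the minimal number $e\in\{0,1\}$ of generators of $\bar G$ satisfies $e\le d$. I would take a free abelian pro-$p$ group $A$ of rank $d-e$ and set $\calG_2:=A\rtimes\bar\calG=(G_2,\theta_2)$. By Lemma \ref{lem:finitely generated extension} this is finitely generated, it is torsion free because $\Img(\theta_2)=\bar G$, and it is Kummerian by Proposition \ref{prop:semidirectprod}; moreover $N_2:=\Ker(\theta_2)=A$ and, by Lemma \ref{lem:K of an extension}, $K(\calG_2)=\{1\}$.

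Next I would match the Frattini data. On the model side, $N_2/K(\calG_2)N_2^p=A/A^p\isom(\dbZ/p)^{d-e}$, and since $G_2/N_2\isom\bar G$ is abelian, $G_2/N_2G_2^p\isom\bar G/\Frat(\bar G)\isom(\dbZ/p)^e$. On the $\calS$ side, $S/NS^p\isom\bar G/\Frat(\bar G)\isom(\dbZ/p)^e$ by the same reasoning, while Proposition \ref{prop: exact sequence with M and N} applied to $\calS$ gives a split exact sequence showing $S/S^p[S,S]\isom (N/K(\calS)N^p)\oplus(S/NS^p)$; comparing $\dbF_p$-dimensions ($d$ on the left, $e$ for the last summand) yields $N/K(\calS)N^p\isom(\dbZ/p)^{d-e}$. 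Thus there are continuous isomorphisms $N/K(\calS)N^p\isom N_2/K(\calG_2)N_2^p$ and $S/NS^p\isom G_2/N_2G_2^p$.

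Now, because $S$ is free, Lemma \ref{lem: bar pi} (with $\calG_1=\calS$) provides an epimorphism $\pi\colon S/K(\calS)\to G_2/K(\calG_2)$ that induces these two isomorphisms and maps $N/K(\calS)$ onto $N_2/K(\calG_2)=A$. In particular its restriction $\pi'\colon N/K(\calS)\to A$ is a homomorphism of abelian pro-$p$ groups inducing an isomorphism on Frattini quotients, so by Lemma \ref{lem:Frattini quotients and isomorphisms} (using that $A$ is free abelian pro-$p$) the map $\pi'$ is an isomorphism. Hence $\Ker(\hat\theta)/K(\calS)\isom A$ is free abelian pro-$p$, i.e.\ $\calS$ is Kummerian. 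The bookkeeping in the first two paragraphs is routine given the earlier lemmas; the one essential hypothesis is the freeness of $S$, which is exactly what licenses Lemma \ref{lem: bar pi} to build $\pi$ — after that Lemma \ref{lem:Frattini quotients and isomorphisms} finishes. So I do not expect a real obstacle beyond assembling these pieces correctly.
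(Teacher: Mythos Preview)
Your proof is correct and follows essentially the same route as the paper's: build the model pair $A\rtimes(\calS/\hat N)$, verify the two Frattini-level isomorphisms required by Lemma~\ref{lem: bar pi}, use the freeness of $S$ to produce the epimorphism $\pi$, and finish with Lemma~\ref{lem:Frattini quotients and isomorphisms}. The only cosmetic difference is that you compute the rank of $A$ explicitly as $d-e$ via Proposition~\ref{prop: exact sequence with M and N}, whereas the paper simply declares $A$ to have the same rank as $\hat N/K(\calS)$; your remark that $\calG_2$ is Kummerian (via Proposition~\ref{prop:semidirectprod}) is true but not actually used in the argument.
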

\begin{proof}
We abbreviate $\hat N=\Ker(\hat\theta)$.
Recall that $K(\calS)\leq\hat N$ and $\hat N/K(\calS)$ is abelian.
The quotient $S/\hat N$ is either $\dbZ_p$ or $\{1\}$, whence $K(\calS/\hat N)=\{1\}$ (see Example \ref{exam:K of Zp}).

Let $A$ be a free abelian pro-$p$ group of the same rank as $\hat N/K(\calS)$.
Then
\begin{equation}
\label{eq:first isomorphism}
\hat N/K(\calS)\hat N^p\isom A/A^p.
\end{equation}
Let $\calG=(G,\theta)=A\rtimes(\calS/\hat N)$, and note that $A=\Ker(\theta)$.
By  Lemma \ref{lem:K of an extension}, $K(\calG)=\{1\}$.
We have
\begin{equation}
\label{eq:second isomorphism}
S/\hat NS^p\isom G/AG^p.
\end{equation}
Lemma \ref{lem: bar pi} yields a continuous epimorphism $\pi\colon S/K(\calS)\to G$ which induces (\ref{eq:first isomorphism}) and (\ref{eq:second isomorphism}), and maps $\hat N/K(\calS)$ onto $A$.
Thus $\pi$ restricts to an epimorphism $\hat N/K(\calS)\to A$ which is an isomorphism on the Frattini quotients.
Since $A$ is a free abelian group, the latter epimorphism is necessarily an isomorphism (Lemma \ref{lem:Frattini quotients and isomorphisms}).
Thus $\hat N/K(\calS)$ is also a free abelian pro-$p$ group, as required.
\end{proof}

We now come to the main result of this section:

\begin{thm}
\label{thm:freeness equiv conditions}
Let $\calG=(G,\theta)$ be a finitely generated torsion free cyclotomic pro-$p$ pair.
The following conditions are equivalent.
\begin{itemize}
\item[(a)]
$\calG$ is Kummerian.
\item [(b)]
$\calG/K(\calG)=A\rtimes(\calG/\Ker(\theta))$ for a free abelian pro-$p$ group $A$.
\item[(c)]
The pro-$p$ group $\Ker(\theta)/K(\calG)$ is torsion free.
\item[(d)]
The pro-$p$ group $G/K(\calG)$ is torsion free.
\item[(e)]
Every cover $\calG'\to\calG$, with $\calG'$ Kummerian,
induces an isomorphism $\calG'/K(\calG')\to \calG/K(\calG)$.
\item[(f)]
There is a cover $\calS=(S,\hat\theta)\to\calG$, with $S$ a finitely generated free pro-$p$ group, such that the induced morphism $\calS/K(\calS)\to\calG/K(\calG)$ is an isomorphism.
\end{itemize}
\end{thm}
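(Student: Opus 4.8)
The plan is to establish the cycle of implications (a)$\Rightarrow$(e)$\Rightarrow$(f)$\Rightarrow$(a), together with the separate equivalences (a)$\Leftrightarrow$(b) and (a)$\Leftrightarrow$(c)$\Leftrightarrow$(d). All the substantive ingredients are already in place --- Lemma~\ref{equal rank}, Lemma~\ref{lem:Frattini quotients and isomorphisms}, Proposition~\ref{prop:modulo K}, Proposition~\ref{prop: calG/K}(a), and Proposition~\ref{prop: exact sequence with M and N} --- so the work is mostly a matter of assembling them while carefully tracking the torsion-free hypothesis. Throughout I would write $N=\Ker(\theta)$, so that $G/N\isom\Img(\theta)$ is isomorphic to either $\dbZ_p$ or $\{1\}$, and in particular the epimorphism $G/K(\calG)\to G/N$ splits.

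For (a)$\Leftrightarrow$(c)$\Leftrightarrow$(d): by Proposition~\ref{prop: exact sequence with M and N}, $N/K(\calG)N^p$ is a subgroup of $G/G^p[G,G]$, which is finite since $G$ is finitely generated; hence the abelian pro-$p$ group $N/K(\calG)$ has finite Frattini quotient, so it is finitely generated, i.e., a direct product of a free abelian pro-$p$ group and a finite group. Such a group is free abelian if and only if it is torsion free, which gives (a)$\Leftrightarrow$(c). For (c)$\Leftrightarrow$(d), torsion-freeness of $\calG$ makes $1\to N/K(\calG)\to G/K(\calG)\to G/N\to 1$ split with $G/N$ torsion free; a torsion element of $G/K(\calG)$ projects to a torsion element of $G/N$, hence to $1$, hence lies in $N/K(\calG)$, so the middle term is torsion free exactly when the left one is. For (a)$\Leftrightarrow$(b): Proposition~\ref{prop: calG/K}(a) provides an isomorphism $\calG/K(\calG)\isom(N/K(\calG))\rtimes(\calG/N)$ of cyclotomic pro-$p$ pairs, so (a) yields (b) with $A=N/K(\calG)$; conversely, any isomorphism $\calG/K(\calG)\isom A\rtimes(\calG/N)$ of cyclotomic pro-$p$ pairs carries the kernel of $\bar\theta$, namely $N/K(\calG)$, onto the kernel of the cyclotomic character of $A\rtimes(\calG/N)$, which is $A$ because $\calG/N$ has trivial cyclotomic kernel, so $N/K(\calG)\isom A$ is free abelian.

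For (a)$\Rightarrow$(e): a cover $\calG'=(G',\theta')\to\calG$ is an epimorphism (Frattini argument), so $\Img(\theta')=\Img(\theta)$ and $\calG'$ is again torsion free. Lemma~\ref{equal rank} gives an epimorphism $\Ker(\theta')/K(\calG')\to N/K(\calG)$ which is an isomorphism on Frattini quotients; as $\calG$ is Kummerian the target is free abelian, so Lemma~\ref{lem:Frattini quotients and isomorphisms} promotes this to an isomorphism, and then the five lemma applied to the two split extensions over $\Img(\theta')=\Img(\theta)$ forces $\calG'/K(\calG')\to\calG/K(\calG)$ to be an isomorphism. For (e)$\Rightarrow$(f): choose a finitely generated free pro-$p$ group $S$ with a continuous epimorphism $S\to G$ inducing an isomorphism on Frattini quotients, and set $\hat\theta=\theta\circ(S\to G)$; then $\calS=(S,\hat\theta)$ is a finitely generated torsion-free cyclotomic pro-$p$ pair, $\calS\to\calG$ is a cover, and $\calS$ is Kummerian by Proposition~\ref{prop:modulo K}, so applying (e) to it yields exactly (f). For (f)$\Rightarrow$(a): the pair $\calS$ in (f) is torsion free (being a cover of the torsion-free $\calG$) with $S$ finitely generated free pro-$p$, hence Kummerian by Proposition~\ref{prop:modulo K}, and therefore $\calS/K(\calS)$ is Kummerian by Example~\ref{exam:Kummerian with trivial theta}(6); transporting this along the given isomorphism $\calS/K(\calS)\isom\calG/K(\calG)$ of cyclotomic pro-$p$ pairs shows that $\calG/K(\calG)$ is Kummerian, and Example~\ref{exam:Kummerian with trivial theta}(6) once more shows that $\calG$ is Kummerian.

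The main obstacle I anticipate is not any single deep fact --- those have all been established above --- but the bookkeeping: at several points one must upgrade an isomorphism of underlying pro-$p$ groups to an isomorphism of cyclotomic pro-$p$ pairs, and keep the several semidirect-product presentations of $\calG/K(\calG)$ mutually compatible and compatible with the map to $1+p\dbZ_p$. The torsion-free hypothesis is precisely what makes the relevant extensions by $\Img(\theta)$ split with torsion-free quotient, and it must be invoked --- directly, or through Proposition~\ref{prop: calG/K}(a) and Proposition~\ref{prop:modulo K} --- at essentially every step.
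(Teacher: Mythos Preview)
Your proof is correct and uses the same substantive ingredients as the paper --- Lemma~\ref{equal rank}, Lemma~\ref{lem:Frattini quotients and isomorphisms}, Proposition~\ref{prop:modulo K}, Proposition~\ref{prop: calG/K}(a), and the finite generation of $N/K(\calG)$ --- with only minor organizational differences: the paper runs the cycle as (a)$\Rightarrow$(b)$\Rightarrow$(c)$\Rightarrow$(e)$\Rightarrow$(f)$\Rightarrow$(b) (with (b)$\Rightarrow$(a) and (c)$\Leftrightarrow$(d) handled separately), deduces finite generation of $N/K(\calG)$ from Lemma~\ref{lem:finitely generated extension} rather than Proposition~\ref{prop: exact sequence with M and N}, and closes the loop via (f)$\Rightarrow$(b) directly rather than your (f)$\Rightarrow$(a) through Example~\ref{exam:Kummerian with trivial theta}(6).
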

\begin{proof}
(a)$\Rightarrow$(b): \quad
This follows immediately from Proposition \ref{prop: calG/K}(a).

\medskip
(b)$\Rightarrow$(a), (b)$\Rightarrow$(c): \quad
We just note that $A=\Ker(\theta)/K(\calG)$.

\medskip

(c)$\Leftrightarrow$(d): \quad
Since $\calG$ is torsion free, $G/\Ker(\theta)\isom\Img(\theta)$ is a torsion free group.
The equivalence now follows from the semi-direct product decomposition $G/K(\calG)=(\Ker(\theta)/K(\calG))\rtimes(G/\Ker(\theta))$.

\medskip

(c)$\Rightarrow$(e): \quad
Set $\calG'=(G',\theta')$.
By the Frattini argument, $\calG'$ is also finitely generated.
By Lemma \ref{equal rank}, the cover $\calG'\to\calG$ induces an epimorphism
\begin{equation}
\label{eq: epimorphism}
\Ker(\theta')/K(\calG') \longrightarrow \Ker(\theta)/K(\calG)
\end{equation}
of abelian pro-$p$ groups, which an isomorphism on the Frattini quotients.
By assumption, $\Ker(\theta)/K(\calG)$ is torsion free,  and by Lemma \ref{lem:finitely generated extension}, it is finitely generated.
Hence it is a free abelian pro-$p$ group.
Lemma \ref{lem:Frattini quotients and isomorphisms} therefore implies that (\ref{eq: epimorphism}) is an isomorphism.
Also,
\[
G'/\Ker(\theta')\isom\Img(\theta')=\Img(\theta)\isom G/\Ker(\theta).
\]
A snake lemma argument now shows that the induced map $G'/K(\calG')\to G/K(\calG)$ is also an isomorphism,
and therefore the induced morphism $\calG'/K(\calG')\to\calG/K(\calG)$ is an isomorphism of cyclotomic pro-$p$ pairs.

\medskip

(e)$\Rightarrow$(f):
There is always a cover $\calS=(S,\hat\theta)\to\calG$, with $S$ a finitely generated free pro-$p$ group.
By Proposition \ref{prop:modulo K}, $\calS$ is Kummerian.

\medskip

(f)$\Rightarrow$(b): \quad
The pro-$p$ group $\Img(\hat\theta)=\Img(\theta)$ is torsion free, so $\calS$ is a torsion free cyclotomic pro-$p$ pair.
By Proposition \ref{prop:modulo K}, $\calS/K(\calS)\isom A\rtimes(\calS/\Ker(\hat\theta))$, with $A$ a free abelian pro-$p$ group.
\end{proof}


\section{1-cocycles}
Let $G$ be a pro-$p$ group and let $\theta\colon G\to\dbZ_p^\times$ be a continuous homomorphism.
It gives rise to an action of $G$ on $\dbZ_p$ by $g\alp=\theta(g)\alp$.
This action induces a $G$-action on $\dbZ/p^n$ for every $n\geq1$.
We denote the resulting $G$-modules by $\dbZ_p(1)_\theta$ and $\dbZ/p^n(1)_\theta$, respectively.

Recall that a continuous map $c\colon G\to \dbZ_p(1)_\theta$ is a {\sl 1-cocycle} if
\begin{equation}
\label{cocycle condition}
c(gh)=c(g)+\theta(g)c(h)
\end{equation}
for every $g,h\in G$.
In particular, $c$ is a continuous homomorphism on the commutator subgroup $[G,G]$.

The next lemma collects a few easy consequences of  (\ref{cocycle condition}).

\begin{lem}
\label{properties of c}
Let $\theta\colon G\to\dbZ_p^\times$ be a continuous homomorphism, let $c\colon G\to \dbZ_p(1)_\theta$ be a continuous 1-cocycle, and let $g,h\in G$.
Then:
\begin{enumerate}
\item[(a)]
$c(1)=0$;
\item[(b)]
$c(g\inv)=-\theta(g)\inv c(g)$;
\item[(c)]
$c\left(g^{-1}hg\right)=c(g^{-1})+\theta(g)^{-1}\left(c(h)+\theta(h)c(g)\right)$.
\item[(d)]
For every $\lam\in\dbZ_p$,
\[
c(g^\lam)=\begin{cases}
\lam c(g),& \hbox{ if }\theta(g)=1,\\
\dfrac{\theta(g)^\lam-1}{\theta(g)-1}c(g),&\hbox{ if } \theta(g)\neq1.
\end{cases}
\]
\item[(e)]
$c([g,h])=\theta(g\inv)\theta(h\inv)\bigl((1-\theta(h))c(g)-(1-\theta(g))c(h)\bigr)$.
\end{enumerate}
\end{lem}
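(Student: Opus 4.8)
The plan is to obtain all five identities by repeated substitution into the cocycle relation (\ref{cocycle condition}), recording (a)--(c) and (e) as short computations and reserving a genuine argument only for the passage from $\dbN$ to $\dbZ_p$ in (d). First I would prove (a) by putting $g=h=1$ in (\ref{cocycle condition}), which gives $c(1)=c(1)+\theta(1)c(1)=2c(1)$, hence $c(1)=0$. For (b), set $h=g\inv$: then $0=c(1)=c(g)+\theta(g)c(g\inv)$, and since $\theta(g)\in\dbZ_p^\times$ we may solve for $c(g\inv)=-\theta(g)\inv c(g)$. For (c), apply (\ref{cocycle condition}) twice, first to the product $g\inv\cdot(hg)$ and then to $h\cdot g$, using $\theta(g\inv)=\theta(g)\inv$; this yields $c(g\inv hg)=c(g\inv)+\theta(g)\inv\bigl(c(h)+\theta(h)c(g)\bigr)$ directly. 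Part (e) then follows by a purely formal calculation: writing $[g,h]=g\inv(h\inv gh)$ and applying (\ref{cocycle condition}), then (c) to the conjugate $h\inv gh$, and finally substituting the values of $c(g\inv)$ and $c(h\inv)$ from (b), one collects the coefficients of $c(g)$ and of $c(h)$; after factoring out $\theta(g)\inv\theta(h)\inv$ these coefficients become $1-\theta(h)$ and $-(1-\theta(g))$, as asserted. I would present (a)--(c) and (e) as a compact chain of displayed equalities rather than in words.

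For (d) I would first treat $\lam\in\dbN$ by induction on $\lam$: the case $\lam=0$ is (a), and the step from $\lam$ to $\lam+1$ uses $c(g^{\lam+1})=c(g)+\theta(g)c(g^\lam)$, which when $\theta(g)=1$ gives $(\lam+1)c(g)$ at once, and when $\theta(g)\neq1$ reduces to the identity $(\theta(g)-1)+\theta(g)(\theta(g)^\lam-1)=\theta(g)^{\lam+1}-1$ after clearing the common denominator $\theta(g)-1$. Here one should observe that $\theta(g)^\lam-1$ is divisible by $\theta(g)-1$ in $\dbZ_p$, so the displayed fraction is a genuine $p$-adic integer acting on $c(g)$. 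To extend from $\dbN$ to all $\lam\in\dbZ_p$ I would invoke continuity: $\lam\mapsto g^\lam$ is a continuous map $\dbZ_p\to G$ and $c$ is continuous, so $\lam\mapsto c(g^\lam)$ is continuous; likewise $\lam\mapsto\theta(g)^\lam$ is continuous, since the closure of $\langle\theta(g)\rangle$ in $\dbZ_p^\times$ is procyclic pro-$p$, and the quotient $(\theta(g)^\lam-1)/(\theta(g)-1)$ then depends continuously on $\lam$, so the right-hand side of (d) is continuous in $\lam$ as well. Since $\dbN$ is dense in $\dbZ_p$, two continuous functions agreeing on $\dbN$ agree everywhere.

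The only point that is not a one-line substitution is this density argument in (d), together with the accompanying remark that the fractions appearing there genuinely lie in $\dbZ_p$; everything in (a)--(c) and (e) is pure bookkeeping with the defining relation. I would therefore devote a sentence or two to the continuity of $\lam\mapsto\theta(g)^\lam$ and to the divisibility $\theta(g)-1\mid\theta(g)^\lam-1$, and leave the remaining parts to the displayed computations.
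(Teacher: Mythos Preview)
Your proposal is correct and follows essentially the same route as the paper: parts (a)--(c) and (e) are obtained by direct substitution into the cocycle relation, and (d) is proved first for non-negative integer exponents by induction and then extended by a density/continuity argument. The only minor deviation is that the paper interposes the case of negative integers (via (b)) and then invokes density of $\dbZ$ in $\dbZ_p$, whereas you pass directly from $\dbN$ to $\dbZ_p$ using density of $\dbN$; your route is slightly shorter and equally valid.
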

\begin{proof}
(a), (b) and (c) follow directly from (\ref{cocycle condition}).

\medskip

(d) \quad
We first assume that $\lam=n$ is a non-negative integer.
Using (a) and (\ref{cocycle condition}) we obtain by induction that $c(g^n)=(\sum_{i=0}^{n-1}\theta(g)^i)c(g)$,
and the desired equality follows.

Next, for $\lam=-n$ a negative integer, (b) gives
$c(g^{-n})=-\theta(g)^{-n}c(g^n)$, and we use the previous case.

For an arbitrary $\lam\in\dbZ_p$ we use the density of $\dbZ$ in $\dbZ_p$ and a continuity argument.

\medskip

(e)\quad
By (\ref{cocycle condition}) and (b),
\[
\begin{split}
c([g,h])&= c(g\inv)+\theta(g\inv)\left(c(h\inv)+\theta(h\inv)\left(c(g)+\theta(g)c(h)\right)\right)\\
&= -\theta(g)\inv c(g)+\theta(g\inv)\left(-\theta(h)\inv c(h)+\theta(h\inv)\left(c(g)+\theta(g)c(h)\right)\right)\\
 &= -\theta(g)\inv c(g)- \theta(g\inv h\inv)c(h)+\theta(g\inv h\inv)c(g)+\theta(h\inv)c(h)\\
 &= \theta(g\inv)\theta(h\inv)\left((1-\theta(h))c(g)-(1-\theta(g))c(h)\right).
\end{split}
\]
\end{proof}

\begin{cor}
\label{cor:c vanishing on C}
Let $G$ be a profinite group, let $\theta\colon G\to\dbZ_p^\times$ be a continuous homomorphism, and let $c\colon G\to\dbZ_p(1)_\theta$ be a continuous 1-cocycle.
Then $c\inv(\{0\})$ is a closed subgroup of $G$.
\end{cor}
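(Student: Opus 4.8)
The plan is to verify directly that $c^{-1}(\{0\})$ is closed under the group operations, and that it is a closed subset of $G$. Closedness as a subset is immediate: $c$ is continuous and $\{0\}$ is closed in $\dbZ_p(1)_\theta$ (the underlying space being $\dbZ_p$ with its profinite topology), so $c^{-1}(\{0\})$ is the preimage of a closed set under a continuous map.

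It remains to check that $c^{-1}(\{0\})$ is a subgroup. First, $1\in c^{-1}(\{0\})$ by Lemma \ref{properties of c}(a). For closure under inversion, if $c(g)=0$ then Lemma \ref{properties of c}(b) gives $c(g^{-1})=-\theta(g)^{-1}c(g)=0$, so $g^{-1}\in c^{-1}(\{0\})$. For closure under multiplication, suppose $c(g)=c(h)=0$; the cocycle relation (\ref{cocycle condition}) gives $c(gh)=c(g)+\theta(g)c(h)=0$, so $gh\in c^{-1}(\{0\})$. Hence $c^{-1}(\{0\})$ is a subgroup, and being closed, it is a closed subgroup.

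There is no real obstacle here — the statement is a formal consequence of the cocycle identity and continuity. The only point requiring a moment's care is that the $G$-action on $\dbZ_p(1)_\theta$ is by the units $\theta(g)\in\dbZ_p^\times$, so that $\theta(g)\cdot 0=0$ and the term $\theta(g)c(h)$ vanishes exactly when $c(h)=0$; this is what makes the set multiplicatively closed.

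\begin{proof}
Since $c$ is continuous and $\{0\}$ is closed in $\dbZ_p$, the subset $c^{-1}(\{0\})$ is closed in $G$.
By Lemma \ref{properties of c}(a), $c(1)=0$, so $1\in c^{-1}(\{0\})$.
If $g\in c^{-1}(\{0\})$, then by Lemma \ref{properties of c}(b), $c(g^{-1})=-\theta(g)^{-1}c(g)=0$, so $g^{-1}\in c^{-1}(\{0\})$.
Finally, if $g,h\in c^{-1}(\{0\})$, then by (\ref{cocycle condition}), $c(gh)=c(g)+\theta(g)c(h)=0$, so $gh\in c^{-1}(\{0\})$.
Therefore $c^{-1}(\{0\})$ is a closed subgroup of $G$.
\end{proof}
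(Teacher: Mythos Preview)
Your proof is correct and follows exactly the same approach as the paper: closedness by continuity, and the subgroup property via the cocycle relation together with Lemma~\ref{properties of c}(a)(b). The paper's version is simply terser.
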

\begin{proof}
By the continuity, $c\inv(\{0\})$ is closed.
The cocycle condition (\ref{cocycle condition}) and Lemma \ref{properties of c}(a)(b) show that it is a subgroup of $G$.
\end{proof}

\begin{lem}
\label{lem:GK cocycleprop}
Let $\calG=(G,\theta)$ be a cyclotomic pro-$p$ pair.
For every continuous 1-cocycle $c\colon G\to\dbZ_p(1)_\theta$ one has $c(K(\calG))=\{0\}$.
\end{lem}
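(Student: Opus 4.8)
The plan is to reduce the claim to the explicit generators of $K(\calG)$ and then verify that $c$ vanishes on each of them by a direct manipulation of the cocycle identities of Lemma \ref{properties of c}.

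First I would invoke Corollary \ref{cor:c vanishing on C}: the preimage $c\inv(\{0\})$ is a closed subgroup of $G$. Since $K(\calG)$ is, by definition, the closed subgroup generated by the elements $h^{-\theta(g)}ghg\inv$ with $g\in G$ and $h\in\Ker(\theta)$, it then suffices to show that $c\bigl(h^{-\theta(g)}ghg\inv\bigr)=0$ for all such $g,h$.

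Next, fix such $g$ and $h$, abbreviate $\lam=\theta(g)$, and recall that $\theta(h)=1$ (so that in particular $h^{-\lam}$ makes sense, $\Ker(\theta)$ being a pro-$p$ group and $\lam\in\dbZ_p$). Splitting $h^{-\lam}ghg\inv$ as the product of $h^{-\lam}$ and $ghg\inv$ and using the cocycle condition (\ref{cocycle condition}) with $\theta(h^{-\lam})=1$, one gets $c\bigl(h^{-\lam}ghg\inv\bigr)=c(h^{-\lam})+c(ghg\inv)$. For the first term, Lemma \ref{properties of c}(d) with $\theta(h)=1$ gives $c(h^{-\lam})=-\lam c(h)$. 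For the second term, Lemma \ref{properties of c}(c) applied with $g$ replaced by $g\inv$, together with Lemma \ref{properties of c}(b) and $\theta(h)=1$, gives $c(ghg\inv)=c(g)+\theta(g)\bigl(c(h)+c(g\inv)\bigr)=\lam c(h)$, using $\theta(g)c(g\inv)=-c(g)$. Adding the two contributions yields $c\bigl(h^{-\lam}ghg\inv\bigr)=-\lam c(h)+\lam c(h)=0$, as required.

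I do not expect any genuine obstacle here: the statement is essentially a bookkeeping consequence of the cocycle relation, once the reduction to generators is made via Corollary \ref{cor:c vanishing on C}. The only small points to keep track of are the exponent $\theta(g)$ on $h$ (handled by part (d) of Lemma \ref{properties of c}, which is available precisely because $h\in\Ker(\theta)$) and the cancellation $c(g)+\theta(g)c(g\inv)=0$ coming from part (b).
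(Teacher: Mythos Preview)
Your proof is correct and follows essentially the same route as the paper: reduce to generators via Corollary \ref{cor:c vanishing on C}, split $c(h^{-\theta(g)}ghg\inv)=c(h^{-\theta(g)})+c(ghg\inv)$, and cancel using Lemma \ref{properties of c}(b)(d). The only cosmetic difference is that the paper invokes Corollary \ref{cor:c vanishing on C} at the end rather than the beginning.
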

\begin{proof}
For $g\in G$ and $h\in \Ker(\theta)$ Lemma \ref{properties of c} gives
\[
\begin{split}
c(h^{-\theta(g)}ghg\inv)&=c(h^{-\theta(g)}) +c(ghg\inv)\\
&=-\theta(g)c(h)+c(g)+\theta(g)\left(c(h)+c(g\inv)\right) \\
&=c(g)+\theta(g)c(g\inv)=0.
\end{split}
\]
The claim now follows from Corollary \ref{cor:c vanishing on C}.
\end{proof}

Next let $G^{(i,p)}$, $i=1,2,\ldots,$ be the (pro-$p$) {\sl lower $p$-central series of $G$}, defined inductively by
\[
G^{(1,p)}=G, \quad   G^{(i+1,p)}=(G^{(i,p)})^p[G,G^{(i,p)}].
\]
Thus $G^{(i+1,p)}$ is the closed subgroup of $G$ generated by all elements $h^p$ and $[g,h]$,
where $g\in G$ and $h\in G^{(i,p)}$.

\begin{lem}
\label{lower central series}
Let  $\theta\colon G\to1+p\dbZ_p$ be a continuous homomorphism,
and let $c\colon G\to\dbZ_p(1)_\theta$ be a continuous 1-cocycle.
Then for every $i$,
\begin{itemize}
 \item[(a)]
 $\theta(G^{(i,p)})\subseteq 1+p^i\dbZ_p$;
 \item[(b)]
 $c(G^{(i,p)})\subseteq p^{i-1}\dbZ_p$.
\end{itemize}
\end{lem}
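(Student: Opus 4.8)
The statement is two simultaneous claims, (a) about the image of $\theta$ and (b) about the image of the cocycle $c$, both indexed by the lower $p$-central series. The natural approach is a simultaneous induction on $i$, since the generators of $G^{(i+1,p)}$ are built from those of $G^{(i,p)}$ using $p$-th powers and commutators, and the cocycle identities in Lemma~\ref{properties of c}(d)(e) mix $\theta$ and $c$ together. The base case $i=1$ is immediate: $\theta(G)\subseteq 1+p\dbZ_p$ by the definition of a cyclotomic pro-$p$ pair (or the hypothesis $\theta\colon G\to 1+p\dbZ_p$), and $c(G)\subseteq\dbZ_p=p^0\dbZ_p$ trivially.

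For the inductive step, assume $\theta(G^{(i,p)})\subseteq 1+p^i\dbZ_p$ and $c(G^{(i,p)})\subseteq p^{i-1}\dbZ_p$. Recall that $G^{(i+1,p)}$ is topologically generated by elements $h^p$ with $h\in G^{(i,p)}$ and $[g,h]$ with $g\in G$, $h\in G^{(i,p)}$; since $\theta$ is a homomorphism and $c\inv$ of a closed subgroup is closed by Corollary~\ref{cor:c vanishing on C}-type reasoning (more precisely, $p^j\dbZ_p$ is a closed subgroup and the relevant preimages are closed), it suffices to check the two containments on these generators.

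For part (a): $\theta(h^p)=\theta(h)^p$, and if $\theta(h)=1+p^i u$ then $\theta(h)^p=1+p^{i+1}u+\binom{p}{2}p^{2i}u^2+\cdots\in 1+p^{i+1}\dbZ_p$ (here one uses $p\mid\binom{p}{2}$ together with $2i\ge i+1$ when $i\ge1$, and $p^2\mid p\cdot p^i$); and $\theta([g,h])=\theta(g)\inv\theta(h)\inv\theta(g)\theta(h)=1$ since $1+p\dbZ_p$ is abelian, hence lies in $1+p^{i+1}\dbZ_p$. So $\theta(G^{(i+1,p)})\subseteq 1+p^{i+1}\dbZ_p$.

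For part (b): on the generator $h^p$, Lemma~\ref{properties of c}(d) gives $c(h^p)=\frac{\theta(h)^p-1}{\theta(h)-1}c(h)$ when $\theta(h)\neq1$, and $c(h^p)=p\,c(h)$ when $\theta(h)=1$; in the first case $\frac{\theta(h)^p-1}{\theta(h)-1}=1+\theta(h)+\cdots+\theta(h)^{p-1}$, and since $\theta(h)\in 1+p^i\dbZ_p$ each summand is $\equiv1\pmod{p^i}$, so the sum is $\equiv p\pmod{p^{i+1}}$, in particular divisible by $p$; combined with $c(h)\in p^{i-1}\dbZ_p$ we get $c(h^p)\in p^i\dbZ_p$, and the case $\theta(h)=1$ is even more direct. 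On the generator $[g,h]$, Lemma~\ref{properties of c}(e) gives
\[
c([g,h])=\theta(g\inv)\theta(h\inv)\bigl((1-\theta(h))c(g)-(1-\theta(g))c(h)\bigr),
\]
where $h\in G^{(i,p)}$. The units $\theta(g\inv),\theta(h\inv)$ are harmless. In the first term, $1-\theta(h)\in p^i\dbZ_p$ (by part (a), already established for index $i$) and $c(g)\in\dbZ_p$, so the product lies in $p^i\dbZ_p$. In the second term, $1-\theta(g)\in p\dbZ_p$ and $c(h)\in p^{i-1}\dbZ_p$ by the inductive hypothesis, so again the product lies in $p^i\dbZ_p$. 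Hence $c([g,h])\in p^i\dbZ_p$, and therefore $c(G^{(i+1,p)})\subseteq p^i\dbZ_p$, completing the induction.

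The only mildly delicate point—and the one I would present most carefully—is the asymmetry in part (b)'s commutator estimate: one factor is controlled by the already-upgraded bound on $\theta$ (giving $p^i$ directly against an uncontrolled $c(g)$), while the other is controlled by the inductive bound on $c$ against only the crude bound $1-\theta(g)\in p\dbZ_p$; it is precisely the interlocking of the two statements that makes the simultaneous induction go through, and a proof of (b) alone would not close. Everything else is routine manipulation of the binomial expansion in $1+p\dbZ_p$ and the cocycle formulas.
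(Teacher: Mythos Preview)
Your proof is correct and follows essentially the same route as the paper's: induction on $i$, handling the generators $h^p$ and $[g,h]$ of $G^{(i+1,p)}$ via the binomial formula for (a) and Lemma~\ref{properties of c}(d)(e) for (b), and closing with the observation that $c^{-1}(p^i\dbZ_p)$ is a closed subgroup. The only cosmetic difference is that the paper proves (a) outright (it needs nothing from (b), since $\theta$ is a homomorphism into an abelian group), so your ``interlocking'' remark is a slight overstatement---(b) depends on (a) but not conversely.
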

\begin{proof}
(a) \quad
By the binomial formula, $(1+p^i\dbZ_p)^p\subseteq1+p^{i+1}\dbZ_p$.
The assertion now follows by induction on $i$.

 \medskip

(b) \quad
We argue by induction on $i$.
For $i=1$ the claim is trivial.

Next let $i\geq1$, $g\in G$ and $h\in G^{(i,p)}$.
By induction $c(h)\in p^{i-1}\dbZ_p$, and by (a), $\theta(h)\in 1+p^i\dbZ_p$.
When $\theta(h)\neq1$ we have $(\theta(h)^p-1)/(\theta(h)-1)=\sum_{j=0}^{p-1}\theta(h)^j\in p\dbZ_p$.
We conclude from Lemma \ref{properties of c}(d)  that $c(h^p)\in p^i\dbZ_p$.

Further, by Lemma \ref{properties of c}(e),
\[
c([g,h])=\theta(g)\inv\theta(h)\inv\Bigl((1-\theta(h))c(g)-(1-\theta(g))c(h)\Bigr)\in p^i\dbZ_p.
\]

It remains to observe that, by (\ref{cocycle condition}) and Lemma \ref{properties of c}(a)(b),  $c^{-1}(p^i\dbZ_p)$ is a closed subgroup of $G$.
\end{proof}


\section{Kummerian pairs and 1-cocycles}
In the finitely generated torsion free case, we have the following cohomological characterization of  Kummerian pairs.

\begin{thm}
\label{thm:surjectivity criterion for Kummerian}
Let $\calG=(G,\theta)$ be a finitely generated torsion free cyclotomic pro-$p$ pair.
Then $\calG$ is Kummerian if and only if  the canonical map
\[
H^1(G,\dbZ/p^n(1)_\theta)\to H^1(G,\dbZ/p(1)_\theta)
\]
is surjective for every positive integer $n$.
\end{thm}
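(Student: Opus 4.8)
The plan is to relate the surjectivity of the reduction map $H^1(G,\dbZ/p^n(1)_\theta)\to H^1(G,\dbZ/p(1)_\theta)$ to the lifting of maps on a minimal generating set to $1$-cocycles with values in $\dbZ_p(1)_\theta$, and then to the freeness of $\Ker(\theta)/K(\calG)$. First I would use the short exact sequence of $G$-modules $0\to\dbZ/p^{n-1}(1)_\theta\xrightarrow{p}\dbZ/p^n(1)_\theta\to\dbZ/p(1)_\theta\to 0$ and the associated long exact sequence in cohomology; passing to the inverse limit over $n$ (all groups are finite since $G$ is finitely generated), the map in question is surjective for all $n$ if and only if the natural map $H^1(G,\dbZ_p(1)_\theta)\to H^1(G,\dbZ/p(1)_\theta)$ is surjective, i.e.\ if and only if every class in $H^1(G,\dbZ/p(1)_\theta)$ lifts to a continuous $1$-cocycle $G\to\dbZ_p(1)_\theta$. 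Since $G$ is finitely generated, $H^1(G,\dbZ/p(1)_\theta)$ is finite-dimensional over $\dbF_p$; evaluating a cocycle on a fixed minimal generating set $X$ of $G$ identifies cocycles up to coboundaries (when $\theta\ne 1$, coboundaries are controlled by the single parameter coming from how $\theta$ twists) with certain functions $X\to\dbZ_p$, so the lifting statement becomes the assertion that every $\alpha\colon X\to\dbZ/p$ is the reduction of some $\tilde\alpha\colon X\to\dbZ_p$ that extends to a $1$-cocycle — which, by a Nakayama/completeness argument, reduces to: every $\alpha\colon X\to\dbZ_p$ extends to a continuous $1$-cocycle $G\to\dbZ_p(1)_\theta$.

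Next I would connect this lifting property to the Kummerian condition via the structural results of \S\ref{sec:thetacomm}. By Lemma \ref{lem:GK cocycleprop} every $1$-cocycle kills $K(\calG)$, so $1$-cocycles $G\to\dbZ_p(1)_\theta$ are the same as $1$-cocycles on $\calG/K(\calG)$; replacing $\calG$ by $\calG/K(\calG)$ (harmless by Example \ref{exam:Kummerian with trivial theta}(6)) I may assume $K(\calG)=\{1\}$, so that by Proposition \ref{prop: calG/K}(a) we have $G=\Ker(\theta)\rtimes \bar G$ with $\Ker(\theta)$ abelian and $\bar G\cong\dbZ_p$ or $\{1\}$. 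A continuous $1$-cocycle $c\colon G\to\dbZ_p(1)_\theta$ restricted to $\Ker(\theta)$ is just a continuous homomorphism $\Ker(\theta)\to\dbZ_p$ (the twist is trivial there), and conversely such a homomorphism together with a choice of value of $c$ on a generator of $\bar G$ determines $c$ via the cocycle identity and Lemma \ref{properties of c}(c),(d) — one must check the consistency/convergence using $\Img(\theta)\subseteq 1+p\dbZ_p$ and Lemma \ref{properties of c}(d), exactly the place where the $1$-units hypothesis is used. Thus the lifting property for $\calG$ is equivalent to: every continuous homomorphism from (a complement of) the generators of $\Ker(\theta)$ to $\dbZ_p$ extends to all of $\Ker(\theta)$, i.e.\ that every map from a minimal generating set of the abelian pro-$p$ group $\Ker(\theta)$ into $\dbZ_p$ extends to a homomorphism.

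Finally I would invoke the elementary fact about abelian pro-$p$ groups — essentially Labute's observation, or a direct argument — that for a finitely generated abelian pro-$p$ group $A$ the following are equivalent: $A$ is free abelian pro-$p$; $\Hom_{\mathrm{cont}}(A,\dbZ_p)\to\Hom(A/A^p,\dbZ/p)$ is surjective; every function from a minimal generating set of $A$ to $\dbZ_p$ extends to a continuous homomorphism $A\to\dbZ_p$. Applying this with $A=\Ker(\theta)$ (which equals $\Ker(\theta)/K(\calG)$ after our reduction) closes the loop: the reduction maps in cohomology are all surjective iff $\Ker(\theta)/K(\calG)$ is free abelian pro-$p$, i.e.\ iff $\calG$ is Kummerian. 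The main obstacle I anticipate is the careful bookkeeping in the second paragraph: verifying that a homomorphism on $\Ker(\theta)$ plus one value on the $\bar G$-generator really does assemble into a well-defined continuous $1$-cocycle on the semidirect product, and that coboundaries on the generating set account for exactly the right degrees of freedom so that the mod-$p$ lifting statement is genuinely equivalent to the $\dbZ_p$-valued extension statement rather than merely implied by it; this is where Lemma \ref{properties of c} and the hypothesis $\theta(G)\subseteq 1+p\dbZ_p$ do the real work.
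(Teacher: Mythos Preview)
Your approach is correct but differs from the paper's. The paper proves the theorem purely cohomologically: it first identifies $H^1(G,\dbZ/p^n(1)_\theta)$ with $H^1(G/K(\calG),\dbZ/p^n(1)_\theta)$ via Lemma~\ref{lem:GK cocycleprop}, then applies the five-term exact sequence to the extension $1\to A\to G/K(\calG)\to G/\Ker(\theta)\to 1$ (with $A=\Ker(\theta)/K(\calG)$). Since $G/\Ker(\theta)$ is $\dbZ_p$ or trivial, $H^2(G/\Ker(\theta),-)=0$, so the five-term sequence truncates to a short exact sequence; after checking that the $G$-action on $H^1(A,\dbZ/p^n(1)_\theta)$ is trivial, a snake-lemma comparison between the $\dbZ/p^n$- and $\dbZ/p$-rows reduces everything to surjectivity of $H^1(A,\dbZ/p^n)\to H^1(A,\dbZ/p)$, which is exactly the freeness of $A$.

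Your route instead passes through Labute's characterization (Proposition~\ref{Labute}): you first convert the $\dbZ/p^n$-surjectivity statements into the single $\dbZ_p$-lifting statement, then into the condition that every $\alpha\colon X\to\dbZ_p$ extends to a cocycle, and finally --- after reducing to $K(\calG)=\{1\}$ --- you analyze cocycles on $A\rtimes\bar G$ by hand to see that this amounts to $\Hom(A,\dbZ_p)\to\Hom(A/A^p,\dbZ/p)$ being surjective. This is sound (the Nakayama step works because $Z^1(G,\dbZ_p(1)_\theta)$ embeds as a $\dbZ_p$-submodule of the finitely generated module $\dbZ_p^X$, and your explicit verification that any homomorphism $A\to\dbZ_p$ together with a value on a generator of $\bar G$ assembles into a cocycle goes through as you anticipate). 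The trade-off: the paper's argument is shorter and keeps Theorem~\ref{thm:surjectivity criterion for Kummerian} logically independent of Proposition~\ref{Labute}, while yours effectively reproves Proposition~\ref{Labute} en route and yields Corollary~\ref{cor:a to c} in a single stroke, at the cost of the cocycle bookkeeping you flagged.
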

\begin{proof}
By Proposition \ref{prop: calG/K}(a), $\calG/K(\calG)=A\rtimes(\calG/\Ker(\theta))$, where $A=\Ker(\theta)/K(\calG)$ is an abelian pro-$p$ group.
By Lemma \ref{lem:finitely generated extension}, $A$ is a finitely generated pro-$p$ group.

We show that, for every $n\geq1$, the natural $G$-action on $H^1(A,\dbZ/p^n(1)_\theta)$ is trivial.
Indeed, for a 1-cocycle $c\colon A\to\dbZ/p^n(1)_\theta$, $g\in G$, and $h\in A$ one has $g\inv hg=h^{\theta(g\inv)}k$ for some $k\in K(\calG)$.
By Lemma \ref{lem:GK cocycleprop}, $c(k)=0$.
Using the cocycle condition (\ref{cocycle condition}) and Lemma \ref{properties of c}(d) we obtain that
\[
({}^gc)(h)=\theta(g)c(g\inv hg)=\theta(g)c(h^{\theta(g\inv)})=\theta(g)\theta(g\inv)c(h)=c(h).
\]
Therefore ${}^gc=c$ on $A$.
Consequently, $H^1(A,\dbZ/p^n(1)_\theta)^G=H^1(A,\dbZ/p^n)$.

Now $\calG$ is Kummerian if and only if  $A$ is a free abelian pro-$p$ group.
Since $A$ is finitely generated, this means that the map $H^1(A,\dbZ/p^n)\to H^1(A,\dbZ/p)$ is surjective for every $n\geq1$.

Let $p\dbZ/p^n(1)_\theta$ be the kernel of the $G$-module morphism $\dbZ/p^n(1)_\theta\to\dbZ/p$.
Since $G/\Ker(\theta)$ is either $\dbZ_p$ or $\{1\}$, the cohomology groups
\[
H^2(G/\Ker(\theta),\dbZ/p^n(1)_\theta), \quad
H^2(G/\Ker(\theta),\dbZ/p), \quad
H^2(G/\Ker(\theta),p\dbZ/p^n\dbZ(1)_\theta)
\]
are trivial \cite{NeukirchSchmidtWingberg}*{Prop.\ 3.5.17}.
Using the five term sequence we see that the above morphism induces a commutative diagram with exact rows
\[
\xymatrix{
0\ar[r] &H^1(G/\Ker(\theta),\dbZ/p^n(1)_\theta)\ar[r]\ar@{->>}[d] & H^1(G/K(\calG),\dbZ/p^n(1)_\theta)\ar[r]\ar[d] & H^1(A,\dbZ/p^n) \ar[r]\ar[d] & 0 \\
0\ar[r] &H^1(G/\Ker(\theta),\dbZ/p)\ar[r] & H^1(G/K(\calG),\dbZ/p)\ar[r] & H^1(A,\dbZ/p)  \ar[r] & 0, }
\]
where the left vertical map is surjective.
Therefore, by the snake lemma, the middle vertical map is surjective if and only if the right vertical map is surjective.
The assertion follows.
\end{proof}

\begin{rem}
\label{rem:relations of cyclotomic type for G_F}
\rm
Let $F$ be a field containing a root of unity of order $p$ (and containing $\sqrt{-1}$ if $p=2$).
Thus $\calG_F$ is finitely generated.
When $\calG_F$ is torsion free, Theorem \ref{thm:surjectivity criterion for Kummerian} gives an alternative proof of the fact that it is Kummerian  (Theorem \ref{thm:Kummer example}(f)).
Indeed, for  $\theta=\theta_{F,p}$ and $n\geq1$ we have a $G_F(p)$-module isomorphism $\dbZ/p^n(1)_\theta=\mu_{p^n}$.
Hence Kummer theory identifies the canonical homomorphism
\[
H^1(G_F(p),\dbZ/p^n(1)_\theta)\longrightarrow H^1(G_F(p),\dbZ/p(1)_\theta)
\]
with the projection $F^\times/(F^\times)^{p^n}\to F^\times/(F^\times)^p$, which is obviously surjective.
\end{rem}

The following equivalence is due to Labute \cite{Labute67a}*{Prop.\ 6}.

\begin{prop}
\label{Labute}
Let $\calG=(G,\theta)$ be a finitely generated cyclotomic pro-$p$ pair, and let $\bar X$ be a minimal system of generators of $G$.
The following conditions are equivalent:
\begin{enumerate}
\item[(a)]
For every $n\geq1$ the canonical map $\dbZ/p^n\to\dbZ/p$ induces an epimorphism
\[
H^1(G,\dbZ/p^n(1)_\theta)\longrightarrow H^1(G,\dbZ/p).
\]
\item[(b)]
For every $n\geq1$, every map $\bar\alp\colon \bar X\to\dbZ/p^n$ extends to a continuous 1-cocycle $c\colon G\to \dbZ/p^n(1)_\theta$.
\item[(c)]
Every map $\bar\alp\colon \bar X\to\dbZ_p$ extends to a continuous 1-cocycle $c\colon G\to \dbZ_p(1)_\theta$.
\end{enumerate}
\end{prop}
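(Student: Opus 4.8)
The plan is to prove the cycle of implications $(a)\Leftrightarrow(b)$ and $(b)\Leftrightarrow(c)$, exploiting the standard correspondence between $1$-cocycles valued in $M(1)_\theta$ and $1$-cohomology, together with the freeness of the free pro-$p$ group on $\bar X$ and a limit argument over $n$.

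\textbf{Setup.} Fix a minimal generating set $\bar X$ of $G$ and let $S$ be the free pro-$p$ group on a set $X$ in bijection with $\bar X$, with the canonical cover $\pi\colon S\to G$; pull $\theta$ back to $\hat\theta=\theta\circ\pi$ on $S$, so $\dbZ/p^n(1)_{\hat\theta}$ is $\dbZ/p^n(1)_\theta$ viewed as an $S$-module through $\pi$. The inflation–restriction five-term sequence for the normal subgroup $R=\Ker(\pi)$ reads
\[
0\to H^1(G,\dbZ/p^n(1)_\theta)\xrightarrow{\Inf} H^1(S,\dbZ/p^n(1)_{\hat\theta})\xrightarrow{\res} \Hom(R,\dbZ/p^n)^S\xrightarrow{\trg} H^2(G,\dbZ/p^n(1)_\theta),
\]
and since $S$ is free on $X$, one has the well-known identification $H^1(S,\dbZ/p^n(1)_{\hat\theta})\cong\prod_{x\in X}\dbZ/p^n$, where a class is determined by the values $c(x)$, $x\in X$, of any representing $1$-cocycle $c$. (Concretely: because $S$ is free, any function $X\to\dbZ/p^n$ extends uniquely to a $1$-cocycle $S\to\dbZ/p^n(1)_{\hat\theta}$ — this is the pro-$p$ analogue of the fact that a crossed homomorphism on a free group is determined freely by its values on generators, and is where freeness is used.) Under this identification the map $\dbZ/p^{n}\to\dbZ/p$ induces the obvious surjection $\prod_X\dbZ/p^n\twoheadrightarrow\prod_X\dbZ/p$.

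\textbf{$(b)\Leftrightarrow(c)$.} This is a purely formal passage to the inverse limit. If (c) holds, then reducing a $\dbZ_p(1)_\theta$-valued $1$-cocycle modulo $p^n$ gives (b). Conversely, given $\bar\alp\colon\bar X\to\dbZ_p$, write $\bar\alp\equiv\bar\alp_n\pmod{p^n}$; the point is that the $1$-cocycles produced by (b) are not canonical, but one can produce a coherent system. I would argue as follows: the set $Z^1_n$ of continuous $1$-cocycles $G\to\dbZ/p^n(1)_\theta$ is a finite (hence compact) abelian group, the reduction maps $Z^1_{n+1}\to Z^1_n$ are homomorphisms, and the ``evaluation on $\bar X$'' maps $Z^1_n\to(\dbZ/p^n)^{\bar X}$ are compatible; condition (b) says each of these evaluation maps is surjective. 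Then $\{\,c\in Z^1_n : c|_{\bar X}=\bar\alp_n\,\}$ is a nonempty closed subset of $Z^1_n$ for each $n$, and these form an inverse system of nonempty compact sets, so its inverse limit is nonempty, yielding a continuous $1$-cocycle $c\colon G\to\dbZ_p(1)_\theta=\invlim\dbZ/p^n(1)_\theta$ with $c|_{\bar X}=\bar\alp$. The main obstacle here is purely bookkeeping: checking that a compatible inverse system of \emph{nonempty} fibers exists, i.e. that surjectivity at each finite level is enough — this is exactly a Mittag-Leffler / compactness argument and uses finiteness of each $Z^1_n$ crucially.

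\textbf{$(a)\Leftrightarrow(b)$.} Both are statements about the composite map on generators. Note that a map $\bar\alp\colon\bar X\to\dbZ/p^n$ ``is'' an element of $\prod_X\dbZ/p^n=H^1(S,\dbZ/p^n(1)_{\hat\theta})$ (via the free-group identification above). This $\bar\alp$ extends to a $1$-cocycle on $G$ — i.e. comes from $H^1(G,\dbZ/p^n(1)_\theta)$ — if and only if the corresponding class lies in the image of $\Inf$, equivalently has trivial image under $\res$ in $\Hom(R,\dbZ/p^n)^S$. Now chase the commutative square relating the five-term sequences for coefficients $\dbZ/p^n(1)_\theta$ and $\dbZ/p$: the map in (a), $H^1(G,\dbZ/p^n(1)_\theta)\to H^1(G,\dbZ/p)$, is surjective iff every mod-$p$ class on $G$ lifts mod $p^n$; pulling up to $S$ and using that $H^1(S,-)\to H^1(S,\dbZ/p)$ is the split surjection $\prod_X\dbZ/p^n\twoheadrightarrow\prod_X\dbZ/p$, one sees that (a) is equivalent to: every element of $(\dbZ/p)^{\bar X}$ that extends to a $1$-cocycle on $G$ lifts to an element of $(\dbZ/p^n)^{\bar X}$ that extends to a $1$-cocycle on $G$. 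But \emph{every} function $\bar X\to\dbZ/p$ extends to a cocycle on $G$ (a consequence of $\calG$ being finitely generated — more precisely, $1$-cocycles mod $p$ on $G$ are just homomorphisms $G/\Frat(G)\to\dbZ/p$ since $\theta\equiv 1\pmod p$, so the mod-$p$ evaluation $Z^1_1(G)\to(\dbZ/p)^{\bar X}$ is already an isomorphism). Hence (a) is equivalent to: the evaluation $Z^1_n(G)\to(\dbZ/p^n)^{\bar X}$ is surjective for all $n$, which is precisely (b).

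\textbf{Summary of the logical order.} First record the free-group computation $H^1(S,\dbZ/p^n(1)_{\hat\theta})\cong(\dbZ/p^n)^X$ and the mod-$p$ triviality of the $\theta$-action, giving $Z^1_1(G)\cong(\dbZ/p)^{\bar X}$. Then prove $(a)\Leftrightarrow(b)$ by the five-term-sequence square above. Finally prove $(b)\Leftrightarrow(c)$ by the compactness/inverse-limit argument. I expect the inverse-limit step in $(b)\Rightarrow(c)$ to be the one requiring the most care, since one must pass from surjectivity of evaluation maps at each finite level to the existence of a single coherent lift, and this is where one invokes finiteness (compactness) of the cocycle groups $Z^1_n$.
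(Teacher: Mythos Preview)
The paper does not actually supply a proof of this proposition; it attributes the result to Labute and cites \cite{Labute67a}*{Prop.\ 6}. So there is no ``paper's proof'' to compare against, and your outline must stand on its own.

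Your overall strategy is sound and the argument is essentially correct, but there is one step in $(a)\Rightarrow(b)$ that you glide over and that deserves to be made explicit. You correctly reduce $(a)$ to the statement that the image $I_n:=\ev_n\bigl(Z^1(G,\dbZ/p^n(1)_\theta)\bigr)\subseteq(\dbZ/p^n)^{\bar X}$ surjects onto $(\dbZ/p)^{\bar X}$ under reduction mod $p$. You then write ``Hence $(a)$ is equivalent to: the evaluation $Z^1_n(G)\to(\dbZ/p^n)^{\bar X}$ is surjective for all $n$.'' But surjecting onto $(\dbZ/p)^{\bar X}$ is \emph{a priori} weaker than equalling all of $(\dbZ/p^n)^{\bar X}$. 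What closes the gap is that $\ev_n$ is a group homomorphism, so $I_n$ is a \emph{subgroup} of the finite abelian $p$-group $(\dbZ/p^n)^{\bar X}$, and $(\dbZ/p)^{\bar X}$ is its Frattini quotient; a subgroup that surjects onto the Frattini quotient is the whole group (Nakayama/Frattini). You should state this explicitly. Alternatively, one can run an induction on $n$ using the short exact sequence $0\to\dbZ/p^n(1)_\theta\xrightarrow{\,p\,}\dbZ/p^{n+1}(1)_\theta\to\dbZ/p\to0$, but the Frattini argument is cleaner.

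The rest of your argument is fine: the identification $Z^1(S,\dbZ/p^n(1)_{\hat\theta})\cong(\dbZ/p^n)^X$ for free $S$, the observation that $\theta\equiv1\pmod p$ makes $Z^1_1(G)\to(\dbZ/p)^{\bar X}$ an isomorphism, and the compactness argument for $(b)\Rightarrow(c)$ (inverse limit of nonempty finite fibers) are all correct and are indeed the natural ingredients.
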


From this and from Theorem \ref{thm:surjectivity criterion for Kummerian} we deduce:

\begin{cor}
\label{cor:a to c}
Let $\calG=(G,\theta)$ be a finitely generated torsion free cyclotomic pro-$p$ pair.
Then $\calG$ is Kummerian if and only if conditions (a)--(c) of Proposition \ref{Labute} hold.
\end{cor}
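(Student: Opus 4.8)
The plan is to deduce the corollary directly by combining Theorem \ref{thm:surjectivity criterion for Kummerian} with Proposition \ref{Labute}, the only point requiring a moment's care being the reconciliation of the two slightly different-looking cohomological conditions appearing in those two results. Theorem \ref{thm:surjectivity criterion for Kummerian} characterizes the Kummerian property by the surjectivity of $H^1(G,\dbZ/p^n(1)_\theta)\to H^1(G,\dbZ/p(1)_\theta)$ for all $n\geq1$, whereas Proposition \ref{Labute}(a) involves the map $H^1(G,\dbZ/p^n(1)_\theta)\to H^1(G,\dbZ/p)$ with the trivial module $\dbZ/p$ in the target.

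First I would observe that these two conditions are in fact identical. Indeed, since $\calG=(G,\theta)$ is a cyclotomic pro-$p$ pair we have $\theta(G)\subseteq1+p\dbZ_p$, so the induced $G$-action on $\dbZ/p$ is trivial; that is, $\dbZ/p(1)_\theta=\dbZ/p$ as $G$-modules, and the canonical reduction map $\dbZ/p^n(1)_\theta\to\dbZ/p(1)_\theta$ is exactly the map induced by $\dbZ/p^n\to\dbZ/p$. Hence the surjectivity statement in Theorem \ref{thm:surjectivity criterion for Kummerian} coincides, condition by condition in $n$, with condition (a) of Proposition \ref{Labute}.

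With this identification in hand, the argument is immediate: by Theorem \ref{thm:surjectivity criterion for Kummerian}, which applies since $\calG$ is finitely generated and torsion free, $\calG$ is Kummerian if and only if condition (a) of Proposition \ref{Labute} holds; and by Proposition \ref{Labute} itself (Labute's equivalence), (a), (b) and (c) are mutually equivalent. Chaining these gives that $\calG$ is Kummerian if and only if all of (a)--(c) hold, as claimed.

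There is no genuine obstacle here — the content is entirely contained in the two cited results. The only thing to be careful about is not to overlook the hypothesis $\theta(G)\subseteq1+p\dbZ_p$ that makes the mod-$p$ coefficient module untwisted, since that is precisely what lets the target $H^1(G,\dbZ/p(1)_\theta)$ of Theorem \ref{thm:surjectivity criterion for Kummerian} be rewritten as the $H^1(G,\dbZ/p)$ of Proposition \ref{Labute}(a).
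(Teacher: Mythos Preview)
Your proposal is correct and follows exactly the paper's approach: the corollary is deduced immediately from Theorem \ref{thm:surjectivity criterion for Kummerian} together with Proposition \ref{Labute}. Your explicit remark that $\dbZ/p(1)_\theta=\dbZ/p$ as $G$-modules (since $\theta(G)\subseteq 1+p\dbZ_p$) is a useful clarification of why the target modules in the two results coincide, but the argument is otherwise identical to the paper's.
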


Using Theorem \ref{thm:surjectivity criterion for Kummerian} we obtain additional examples of Kummerian pairs.

\begin{prop}
\label{prop:freeprod}
 Let $\calG_1$ and $\calG_2$ be cyclotomic pro-$p$ pairs.
Then  $\calG_1*\calG_2$ is  Kummerian if and only if both $\calG_1$ and $\calG_2$ are Kummerian.
\end{prop}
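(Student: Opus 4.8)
The plan is to use the cohomological characterization of the Kummerian property from Theorem \ref{thm:surjectivity criterion for Kummerian}, since free products of pro-$p$ groups behave well with respect to group cohomology. First I would observe that if $\calG_1 = (G_1,\theta_1)$ and $\calG_2 = (G_2,\theta_2)$, then $\calG_1 * \calG_2 = (G,\theta)$ with $G = G_1 *_p G_2$. One should note that $\calG_1 * \calG_2$ is torsion free if and only if both $\calG_i$ are, and it is finitely generated if and only if both $\calG_i$ are, so the hypotheses of Theorem \ref{thm:surjectivity criterion for Kummerian} transfer correctly between the factors and the product. (One may also need to handle the not-necessarily-finitely-generated case, reducing to finitely generated covers, or invoke the definition directly via $K$; but the cleanest route is through cohomology after reducing to the finitely generated situation — I would check whether the statement as given implicitly assumes finite generation, and if not, reduce via Lemma \ref{lem:covers of extensions} and the fact that $K$ is a functor.)

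The key computational input is that for a free pro-$p$ product $G = G_1 *_p G_2$ and any finite $G$-module $M$, one has $H^n(G,M) \cong H^n(G_1,M) \oplus H^n(G_2,M)$ for $n \geq 2$, and for $n=1$ the restriction map $H^1(G,M) \to H^1(G_1,M) \oplus H^1(G_2,M)$ is an isomorphism (this is the Mayer--Vietoris / decomposition for free pro-$p$ products; see \cite{NeukirchSchmidtWingberg}). Applying this with $M = \dbZ/p^n(1)_\theta$ and $M = \dbZ/p(1)_\theta$, and noting that the restriction of $\dbZ/p^m(1)_\theta$ to $G_i$ is exactly $\dbZ/p^m(1)_{\theta_i}$, I get a commutative square
\[
\xymatrix{
H^1(G,\dbZ/p^n(1)_\theta) \ar[r]^-{\sim} \ar[d] & H^1(G_1,\dbZ/p^n(1)_{\theta_1}) \oplus H^1(G_2,\dbZ/p^n(1)_{\theta_2}) \ar[d] \\
H^1(G,\dbZ/p(1)_\theta) \ar[r]^-{\sim} & H^1(G_1,\dbZ/p(1)_{\theta_1}) \oplus H^1(G_2,\dbZ/p(1)_{\theta_2}),
}
\]
where the horizontal maps are isomorphisms and the vertical maps are the canonical reduction maps. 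Hence the left vertical map is surjective for every $n \geq 1$ if and only if each of the two summand maps on the right is surjective for every $n \geq 1$. By Theorem \ref{thm:surjectivity criterion for Kummerian} this says precisely that $\calG_1 * \calG_2$ is Kummerian if and only if both $\calG_1$ and $\calG_2$ are.

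The main obstacle I anticipate is not the cohomological bookkeeping but making sure the reduction to the finitely generated torsion free setting is legitimate, and identifying the exact form of the free-product decomposition of $H^1$ in the pro-$p$ category with twisted coefficients: one must confirm that $\dbZ/p^n(1)_\theta$ restricts to $\dbZ/p^n(1)_{\theta_i}$ on each factor (immediate, since $\theta$ extends $\theta_i$ by construction) and that the decomposition is functorial in the module so that it is compatible with the reduction maps $\dbZ/p^n \to \dbZ/p$. If one prefers to avoid any finite-generation hypothesis, an alternative is to argue directly: by Lemma \ref{lem:K of an extension}-style functoriality and the universal property of the free product one shows $\Ker(\theta)/K(\calG_1 * \calG_2)$ decomposes compatibly with the factors, but this is messier, so I would present the cohomological argument as the main line, perhaps remarking that Proposition \ref{Labute} and Corollary \ref{cor:a to c} give the cocycle-extension reformulation which makes the free-product case especially transparent (a map on a minimal generating set of $G_1 *_p G_2$ is just a pair of maps on minimal generating sets of $G_1$ and $G_2$, and 1-cocycles extend independently over the two factors).
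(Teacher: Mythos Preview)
Your approach is the same as the paper's: both invoke Theorem \ref{thm:surjectivity criterion for Kummerian} and the restriction-to-factors decomposition of $H^1$ for a free pro-$p$ product, then compare the reduction maps on the two sides of the commutative square. One correction: the upper restriction map
\[
H^1(G,\dbZ/p^n(1)_\theta)\longrightarrow H^1(G_1,\dbZ/p^n(1)_{\theta_1})\oplus H^1(G_2,\dbZ/p^n(1)_{\theta_2})
\]
is in general only \emph{surjective}, not an isomorphism --- the Mayer--Vietoris sequence for the free product gives it a kernel isomorphic to $M/(M^{G_1}+M^{G_2})$ with $M=\dbZ/p^n(1)_\theta$, which is nonzero once $n\ge2$ and the $\theta_i$ act nontrivially. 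The paper states exactly this (upper map surjective, lower map an isomorphism since $\dbZ/p(1)_\theta$ has trivial action), and that is all the diagram chase needs: surjectivity of the upper horizontal plus bijectivity of the lower horizontal already yield the desired equivalence of surjectivity of the two vertical maps. Your awareness of the finite-generation/torsion-free hypotheses required by Theorem \ref{thm:surjectivity criterion for Kummerian} is apt; the paper applies that theorem without comment, so the reduction you sketch is a genuine clarification rather than a deviation.
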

\begin{proof}
We write $\calG_i=(G_i,\theta_i)$, $i=1,2$, and $\calG_1*\calG_2=(G,\theta)$.
We may consider $\dbZ_p(1)_{\theta}$ and $\dbZ/p^n(1)_\theta$ as $G_i$-modules, $i=1,2$.
The $G_i$-modules $\dbZ/p(1)_{\theta_i}$, $i=1,2$, and the $G$-module $\dbZ/p(1)_\theta$ are trivial.

For every $n\geq1$ there is a commutative diagram
\[
\xymatrix{ H^1\left(G,\dbZ/p^n(1)_\theta\right)\ar[d]\ar[r]^-{\Res} &
H^1\left(G_1,\dbZ/p^n(1)_{\theta_1}\right)\oplus H^1\left(G_2,\dbZ/p^n(1)_{\theta_2}\right) \ar@<10ex>[d]\ar@<-10ex>[d] \\
H^1\left(G,\dbZ/p(1)_\theta\right)\ar[r]^-{\Res}& H^1\left(G_1,\dbZ/p(1)_{\theta_1}\right)\oplus H^1\left(G_2,\dbZ/p(1)_{\theta_2}\right) }.
\]
By \cite{NeukirchSchmidtWingberg}*{Th.\ 4.1.4, Th.\ 4.1.5}, the upper restriction map is surjective and the lower restriction map is an isomorphism.
Consequently, the left vertical map is surjective if and only if the middle and right vertical maps are surjective.
Now apply Theorem \ref{thm:surjectivity criterion for Kummerian}.
\end{proof}

As another important example, we recall that a pro-$p$ group $G$ is a {\sl Demu\v skin group} if the following conditions hold:
\begin{enumerate}
\item[(i)]
$H^1(G,\dbZ/p)$ is finite;
\item[(ii)]
$\dim_{\dbF_p} H^2(G,\dbZ/p)=1$;
\item[(iii)]
The cup product
\[
\cup\colon H^1(G,\dbZ/p)\times H^1(G,\dbZ/p)\to H^2(G,\dbZ/p)
\]
is non-degenerate.
\end{enumerate}

Note that, by (i), $G$ is finitely generated \cite{NeukirchSchmidtWingberg}*{Prop.\ 3.9.1}.
Explicit presentations of the Demu\v skin groups were given by Demu\v{s}kin \cite{Demushkin61}, Serre \cite{Serre63} and Labute \cite{Labute67a}.
If $F$ is a finite extension of $\dbQ_p(\mu_p)$, then $G_F(p)$ is Demu\v skin \cite{NeukirchSchmidtWingberg}*{Prop.\ 7.5.9}.
It is an open problem whether there are other Demu\v skin pro-$p$ groups (up to isomorphism) which are realizable as $G_F(p)$ for some field $F$ containing a root of unity of order $p$; Cf.\  \cite{Efrat03}.
The simplest example for which the problem is currently open appears to be the pro-$2$ group
\[
G=\langle x_1,x_2,x_3\ |\ x_1^2[x_2,x_3]=1\rangle.
\]
Cf.\ \cite{JacobWare89}*{Remark 5.5}.

By a result of Labute \cite{Labute67a}*{Th.\ 4}, for a Demu\v skin pro-$p$ group $G$ there is a unique continuous homomorphism $\theta\colon G\to 1+p\dbZ_p$ such that $\calG=(G,\theta)$ satisfies the surjectivity condition of Theorem \ref{thm:surjectivity criterion for Kummerian}.
We deduce:

\begin{thm}
\label{thm:demushkin}
For every torsion free Demu\v skin pro-$p$ group $G$ there is a unique continuous homomorphism $\theta\colon G\to1+p\dbZ_p$ such that the pair $(G,\theta)$ is Kummerian.
\end{thm}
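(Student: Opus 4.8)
The plan is to take for $\theta$ the homomorphism furnished by the cited result of Labute and to use Theorem~\ref{thm:surjectivity criterion for Kummerian} to pass between the surjectivity condition characterizing $\theta$ and the Kummerian property. By condition~(i) the group $G$ is finitely generated, so every cyclotomic pro-$p$ pair on $G$ is finitely generated. Suppose first that $p$ is odd, or more generally that the pair $\calG=(G,\theta)$ is torsion free. Then Theorem~\ref{thm:surjectivity criterion for Kummerian} says $\calG$ is Kummerian precisely because $\theta$ satisfies the surjectivity condition of that theorem; and if $(G,\theta')$ is any Kummerian cyclotomic pro-$p$ pair, it too is torsion free (automatic for $p$ odd), hence satisfies the surjectivity condition by the same theorem, hence $\theta'=\theta$ by the uniqueness in Labute's result. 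This settles the case $p$ odd completely, and the case $p=2$ with $-1\notin\Img(\theta)$; in particular a torsion free image $\theta'$ that were Kummerian would have to equal $\theta$, which rules such $\theta'$ out whenever $\theta$ itself does not have torsion free image.

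The remaining case is $p=2$ with $-1\in\Img(\theta)$, which genuinely occurs --- e.g.\ for the ``$q=2$'' Demu\v{s}kin groups such as $G_{\dbQ_2}(2)$, or the group $\langle x_1,x_2,x_3\mid x_1^2[x_2,x_3]=1\rangle$ whose realizability as some $G_F(2)$ is open. Here $\calG$ is not torsion free, so I would descend to the open subgroup $G_0=\theta\inv(1+4\dbZ_2)$, of index $\le 2$ in $G$. A finite-index subgroup of a Demu\v{s}kin pro-$p$ group is again Demu\v{s}kin (of rank computed from the Euler characteristic), so $G_0$ is a finitely generated Demu\v{s}kin pro-$2$ group, and $\theta_0:=\theta|_{G_0}$ has image in the torsion free group $1+4\dbZ_2$, making $\calG_0:=(G_0,\theta_0)$ a torsion free pair. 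One checks that $\calG_0$ is Kummerian --- the natural route is to verify $\theta_0$ satisfies the surjectivity condition for $G_0$, equivalently that it is the distinguished homomorphism of $G_0$ in Labute's sense (note that the parameter ``$q$'' of $G_0$ is now $\ge 4$, so its distinguished homomorphism has torsion free image and one is back in the first paragraph) --- so that $M_0:=\Ker(\theta_0)/K(\calG_0)=\Ker(\theta)/K(\calG_0)$ is a finitely generated free abelian pro-$2$ group. Choosing $g_1\in G$ with $\theta(g_1)=-1$, conjugation by $g_1$ induces an involution $\sigma$ of $M_0$ (an involution because $g_1^2\in\Ker(\theta)$ acts trivially mod $K(\calG_0)$), and comparing the generating sets of $K(\calG)$ and $K(\calG_0)$ shows $\Ker(\theta)/K(\calG)\isom M_0/(1+\sigma)M_0$.

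Regarded as a module over $\dbZ_2[\langle\sigma\rangle]\isom\dbZ_2[\dbZ/2]$, the lattice $M_0$ decomposes, by the classification of $\dbZ_2[\dbZ/2]$-lattices, into summands isomorphic to the trivial lattice $\dbZ_2$, the sign lattice, and the regular representation $\dbZ_2[\dbZ/2]$; since $1+\sigma$ has free cokernel on the latter two but is multiplication by $2$ on the trivial one, $M_0/(1+\sigma)M_0$ --- equivalently $\calG=(G,\theta)$ --- is Kummerian if and only if $M_0$ has no trivial $\sigma$-summand. Establishing this last point is the main obstacle, and it is where torsion freeness of $G$ itself (not merely of $G_0$) together with the Demu\v{s}kin hypothesis must be used: I would try first to deduce it from Poincar\'e duality for the PD$^2$-group $G_0$, and fall back on reading $\theta$, $\Ker(\theta)$ and the generators of $K(\calG)$ off Labute's explicit presentations of the ``$q=2$'' Demu\v{s}kin groups, where one sees directly that $M_0$ is a permutation module --- $\sigma$ interchanging the two Schreier lifts of each generator --- hence has no trivial summand. (For $\langle x_1,x_2,x_3\mid x_1^2[x_2,x_3]=1\rangle$ one finds $G_0\isom\langle b,c,b',c'\mid[c,b][b',c']=1\rangle$ with $\sigma$ exchanging $b\leftrightarrow b'$ and $c\leftrightarrow c'$, so $M_0\isom\dbZ_2[\dbZ/2]^2$.) Uniqueness in this case follows in the same way: the torsion free image candidates are excluded by the first paragraph, and the explicit presentation pins down the homomorphism with $-1$ in its image.
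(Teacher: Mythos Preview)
Your first paragraph is exactly the paper's proof: Labute's theorem supplies a unique $\theta$ for which the surjectivity condition of Theorem~\ref{thm:surjectivity criterion for Kummerian} holds, and that theorem converts this into ``$(G,\theta)$ is Kummerian'' (both directions, giving existence and uniqueness) once the pair is torsion free. The paper's argument is literally the sentence ``We deduce:'' following the paragraph that recalls Labute's result.

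Everything from your second paragraph onward is additional work the paper does not do. You read ``torsion free Demu\v skin pro-$p$ group $G$'' as a hypothesis on the \emph{group} $G$; since every Demu\v skin group of rank $\geq 2$ has $\cd=2$ and is torsion free, this leaves the $p=2$, $q$-invariant $2$ groups (e.g.\ $G_{\dbQ_2}(2)$ or $\langle x_1,x_2,x_3\mid x_1^2[x_2,x_3]=1\rangle$) in scope, and for these Labute's $\theta$ has $-1$ in its image, so Theorem~\ref{thm:surjectivity criterion for Kummerian} does not apply directly. The paper's one-line deduction, however, only goes through when the \emph{pair} $(G,\theta)$ is torsion free, which strongly suggests that this is what the hypothesis is meant to exclude---i.e.\ the statement is intended for Demu\v skin groups with $q\neq 2$, and under that reading your first paragraph is already the full proof.

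If one insists on your reading, then the paper's proof has precisely the gap you try to fill, but your treatment of the $q=2$ case is only a plan: you reduce to showing that the $\dbZ_2[\dbZ/2]$-lattice $M_0$ has no trivial summand and say you would either extract this from Poincar\'e duality or from Labute's explicit presentations, without carrying either out in general. The identity $\Ker(\theta)/K(\calG)\cong M_0/(1+\sigma)M_0$ and the claim that $\theta|_{G_0}$ is the distinguished character of $G_0$ are asserted but not verified. So under your interpretation the proposal is an outline rather than a proof; under the paper's evident interpretation the extra work is unnecessary.
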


We obtain the following additional characterization of Kummerian pairs.

\begin{thm}
\label{thm:K intersection}
Let $\calG=(G,\theta)$ be a finitely generated torsion free cyclotomic pro-$p$ pair.
Then $\calG$ is Kummerian if and only if
\[
K(\calG)=\bigcap_{c} c\inv(\{0\}),
\]
where $c$ ranges over all continuous 1-cocycles $G\to\dbZ_p(1)_\theta$.
\end{thm}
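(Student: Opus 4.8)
The plan is to prove both implications by comparing $K(\calG)$ with the closed subgroup $C(\calG) := \bigcap_c c^{-1}(\{0\})$, where $c$ ranges over continuous $1$-cocycles $G\to\dbZ_p(1)_\theta$. One inclusion is free: Lemma \ref{lem:GK cocycleprop} gives $c(K(\calG))=\{0\}$ for every such $c$, hence $K(\calG)\subseteq C(\calG)$ always, independently of the Kummerian hypothesis. So the content is the reverse inclusion $C(\calG)\subseteq K(\calG)$, and its failure in the non-Kummerian case.

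For the forward direction, assume $\calG$ is Kummerian. By Corollary \ref{cor:a to c} (equivalently Proposition \ref{Labute}), every map $\bar\alpha\colon\bar X\to\dbZ_p$ on a minimal generating set $\bar X$ of $G$ extends to a continuous $1$-cocycle $c\colon G\to\dbZ_p(1)_\theta$. I want to show that if $g\in G$ lies in $c^{-1}(\{0\})$ for all such $c$, then the image $\bar g$ of $g$ in $\bar G := G/K(\calG)$ is trivial. Work inside $\bar\calG := \calG/K(\calG)$, which by Proposition \ref{prop: calG/K}(a) decomposes as $A\rtimes(\calG/\Ker\theta)$ with $A=\Ker(\theta)/K(\calG)$ a free abelian pro-$p$ group (this is exactly where Kummerianness enters). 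Every $1$-cocycle on $\bar\calG$ pulls back to one on $\calG$ (since $c(K(\calG))=0$), and conversely — so $C(\calG)/K(\calG)$ equals the analogous intersection $C(\bar\calG)$ for $\bar\calG$. Thus it suffices to show $C(\bar\calG)=\{1\}$. On $A\rtimes(\calG/\Ker\theta)$ one can write down enough cocycles explicitly: the projection $\bar G\to\bar G/A \cong\Img(\theta)$ followed by any $\dbZ_p$-valued cocycle on $\dbZ_p$ or $\{1\}$ separates elements outside $A$ (using that $\Img\theta$ is $\dbZ_p$ or trivial, and cocycles on $\dbZ_p$ with nontrivial $\theta$-action are injective up to a unit via Lemma \ref{properties of c}(d)); and on $A$ itself, since $A$ is a free abelian pro-$p$ group, any continuous homomorphism $A\to\dbZ_p$ extends to a $1$-cocycle on $\bar G$ — the $G$-action on $H^1(A,-)$ is trivial by the computation in the proof of Theorem \ref{thm:surjectivity criterion for Kummerian}, and the five-term sequence there shows $H^1(\bar G,\dbZ_p(1)_\theta)\to H^1(A,\dbZ_p)$ is surjective. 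Since $A$ is free abelian, $\bigcap_{f\colon A\to\dbZ_p}\Ker f=\{1\}$, so any $1\neq a\in A$ is detected. Combining, $C(\bar\calG)=\{1\}$, i.e. $C(\calG)=K(\calG)$.

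For the converse, suppose $\calG$ is not Kummerian; I must exhibit an element of $C(\calG)$ outside $K(\calG)$, equivalently show $C(\bar\calG)\neq\{1\}$ for $\bar\calG=\calG/K(\calG)$. Now $A=\Ker(\bar\theta)$ is a finitely generated (Lemma \ref{lem:finitely generated extension}) abelian pro-$p$ group that is \emph{not} free, hence has nontrivial torsion subgroup $A_{\mathrm{tor}}\neq\{1\}$. Pick $1\neq a\in A_{\mathrm{tor}}$. For any continuous $1$-cocycle $c\colon\bar G\to\dbZ_p(1)_{\bar\theta}$, its restriction to $A$ is a continuous homomorphism $A\to\dbZ_p$ (as $\bar\theta$ is trivial on $A=\Ker\bar\theta$), so it kills $A_{\mathrm{tor}}$; in particular $c(a)=0$. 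Thus $a\in c^{-1}(\{0\})$ for every $c$, so a lift of $a$ to $G$ lies in $C(\calG)\setminus K(\calG)$ (it is nontrivial modulo $K(\calG)$). Hence $C(\calG)\neq K(\calG)$, completing the contrapositive.

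The main obstacle I anticipate is the cocycle-extension step in the forward direction: one needs that every continuous homomorphism $A\to\dbZ_p$ genuinely extends to a $1$-cocycle on all of $G$ (not just on $\bar G$, but those coincide after modding out $K(\calG)$), and that together with cocycles detecting the quotient $\Img(\theta)$ this suffices to separate every nontrivial element of $\bar G$ — this is really a repackaging of the surjectivity argument in Theorem \ref{thm:surjectivity criterion for Kummerian} combined with freeness of $A$, so it should go through cleanly, but the bookkeeping of how cocycles on $\calG$, on $\calG/K(\calG)$, and on the factor $A$ correspond is where care is needed. Everything else is formal once one has the decomposition of Proposition \ref{prop: calG/K}(a) and Lemma \ref{lem:GK cocycleprop}.
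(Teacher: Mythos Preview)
Your proposal is correct. The converse direction (non-Kummerian implies $K(\calG)\subsetneq C(\calG)$) is essentially identical to the paper's: both pick a torsion element in $\Ker(\theta)/K(\calG)$ and observe that every cocycle, being a homomorphism on $\Ker(\theta)$ with values in torsion-free $\dbZ_p$, must kill it.

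For the forward direction the paper takes a different, more explicit route. It invokes Corollary~\ref{cor:a to c} to obtain, for a carefully chosen minimal generating set $x_0,x_1,\ldots,x_d$ (with $x_0$ generating $G/\Ker(\theta)$ and $x_1,\ldots,x_d$ lifting a $\dbZ_p$-basis of $\Ker(\theta)/K(\calG)$), the ``dual'' cocycles $c_i$ with $c_i(x_j)=\delta_{ij}$. Writing $g\in C(\calG)$ as $x_0^{\lambda_0}\cdots x_d^{\lambda_d}t$ with $t\in K(\calG)$, a direct computation using Lemma~\ref{properties of c}(d) shows each $\lambda_i=0$. Your argument instead passes to $\bar\calG=\calG/K(\calG)\cong A\rtimes(\calG/\Ker\theta)$ and uses the five-term sequence (with $H^2$ of the cyclic quotient vanishing) to extend homomorphisms $A\to\dbZ_p$ to cocycles on $\bar G$, together with a pulled-back cocycle detecting the $\Img(\theta)$-component. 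Your route is more structural and cleanly isolates why freeness of $A$ is exactly what is needed; the paper's is a hands-on coordinate computation. Note, incidentally, that your opening citation of Corollary~\ref{cor:a to c} in the forward direction is not actually used in the argument you then give --- you may drop it, or alternatively use it in place of the five-term-sequence step (that is essentially what the paper does).
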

\begin{proof}
When $\theta=1$ we have $K(\calG)=[G,G]$ and the continuous 1-cocycles $c\colon G\to\dbZ_p(1)_\theta$ are exactly the continuous homomorphisms $G\to\dbZ_p$.
The equivalence therefore follows in this case from the structure of finitely generated torsion free pro-$p$ groups.
We may therefore assume that $\theta\neq1$.

Suppose that $K(\calG)=\bigcap_{c} c\inv(\{0\})$ with $c$ as above.
To prove that $\calG$ is Kummerian, it suffices to show that $\Ker(\theta)/K(\calG)$ is a torsion free pro-$p$ group (Theorem \ref{thm:freeness equiv conditions}).
To this end take $g\in\Ker(\theta)$ such that $g^n \in K(\calG)$ for some positive integer $n$.
For every 1-cocycle $c\colon G\to\dbZ_p(1)_\theta$ Lemma \ref{properties of c}(d) gives $0=c(g^n)=nc(g)$, whence $c(g)=0$.
By the assumption, $g\in K(\calG)$, as desired.

Conversely, suppose that $\calG$ is Kummerian.
By Corollary \ref{cor:a to c}, condition (c) of Proposition \ref{Labute} holds.
By  Lemma \ref{lem:GK cocycleprop}, $K(\calG)\subseteq\bigcap_cc\inv(\{0\})$.

For the converse inclusion, choose $x_0\in G$ whose coset generates $G/\Ker(\theta)\isom\dbZ_p$.
Also choose  $x_1\nek x_d\in G$ whose cosets modulo $K(\calG)$ form a minimal set of generators of  the free abelian pro-$p$ group $\Ker(\theta)/K(\calG)$.
Then $G=\langle x_0, x_1\nek x_d\rangle K(\calG)$.
Moreover, $x_0,\ldots,x_d$ form a minimal set of generators of $G$, as $K(\calG)\subseteq\Frat(G)$.

Given $g\in G$, we may write
\[
g=x_0^{\lambda_0}x_1^{\lambda_1}\cdots x_d^{\lambda_d}\cdot t
\]
for some $\lambda_0,\lambda_1\ldots,\lambda_d\in\dbZ_p$ and $t\in K(\calG)$.
Now assume further that $g\in\bigcap_cc\inv(\{0\})$.
Fix $0\leq i\leq d$ and set $h=x_0^{\lambda_0}\cdots x_{i-1}^{\lambda_{i-1}}$ and $h'=x_{i+1}^{\lambda_{i+1}}\cdots x_d^{\lambda_d}$.
Condition (c) of Proposition \ref{Labute} yields a continuous 1-cocycle $c_i\colon G\to\dbZ_p(1)_\theta$  such that  $c_i(x_i)=1$ and $c_i(x_j)=0$ for $j\neq i$.
By Lemma \ref{cor:c vanishing on C},
$c_i(h)=c_i(h')=0$, and by Lemma \ref{lem:GK cocycleprop}, $c_i(t)=0$.
Using Lemma \ref{properties of c} we compute:
\[
\begin{split}
0=c_i(g)&=c_i(hx_i^{\lam_i}h't)
=c_i(h)+\theta(h)\Bigl(c_i(x_i^{\lam_i})+\theta(x_i^{\lam_i})\bigl(c_i(h')+\theta(h')c_i(t)\bigr)\Bigr) \\
&= \theta(h)c_i(x_i^{\lam_i})
= \begin{cases}
\theta(h)\cdot\lam_i,& \hbox{ if }\theta(x_i)=1,\\
\theta(h)\cdot\dfrac{\theta(x_i)^{\lam_i}-1}{\theta(x_i)-1},
&\hbox{ if } \theta(x_i)\neq1.
\end{cases}
 \end{split}
 \]
As $\Img(\theta)$ is torsion free, this implies that $\lam_i=0$.
Since $i$ was arbitrary, $g=t\in K(\calS)$.
\end{proof}


\section{Groups which are not maximal pro-$p$ Galois groups}
\label{sec:not Galois}

In this section we provide examples of pro-$p$ groups $G$ which cannot be completed to a Kummerian torsion free cyclotomic pro-$p$ pair $(G,\theta)$.
Recall that when $p\neq2$, every cyclotomic pro-$p$ pair is torsion free.
Hence,  by Theorem \ref{thm:Kummer example}(f), in each of these examples, $G$ is not realizable as the maximal pro-$p$ Galois group of a field containing a root of unity of order $p$.
Similarly, when $p=2$, the group $G$ is not realizable as a maximal pro-$2$ Galois group of a field containing a root of unity of order $4$.

\begin{thm}
\label{thm:generalized AS}
Let $S$ be the free pro-$p$ group on the basis  $X=\{x_1,x_2,\ldots,x_d\}$ of $d$ elements, and $S_1$ its closed subgroup generated by $x_2,\ldots,x_d$.
Let $R$ be a closed normal subgroup of $S$ contained in $\Frat(S)$.
Suppose that $R$ contains a relation $r$ of one of the following types:
\begin{enumerate}
\item[(i)]
$r=x_1^\lambda s$, where $0\neq\lambda\in p\dbZ_p$ and $s\in S_1$;
\item[(ii)]
$r=x_1^\lambda s t$,
with $\lambda\in p\dbZ_p\setminus p^k\dbZ_p$ for some $k\geq2$, $s\in S_1\cap[S,S]$, and $t\in S^{(k+1,p)}\cap[S,S]$.
\end{enumerate}
Then $G=S/R$ cannot be completed into a Kummerian torsion free cyclotomic pro-$p$ pair $\calG=(G,\theta)$.
\end{thm}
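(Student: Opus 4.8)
The plan is to argue by contradiction. Suppose $G=S/R$ completes to a Kummerian torsion free cyclotomic pro-$p$ pair $\calG=(G,\theta)$; I will construct a continuous $1$-cocycle that factors through $G$ — hence vanishes on $R$ — yet is nonzero on the relation $r$, a contradiction. Write $\pi\colon S\to G$ for the quotient and $\vartheta=\theta\circ\pi\colon S\to1+p\dbZ_p$, so $R=\Ker\pi\subseteq\Ker\vartheta$ and $\Img\vartheta=\Img\theta$ is torsion free. Since $G$ is finitely generated (a quotient of the finitely generated free pro-$p$ group $S$), torsion free, and Kummerian, Corollary \ref{cor:a to c} shows condition (c) of Proposition \ref{Labute} holds for $\calG$. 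Because $R\subseteq\Frat(S)$, the images $\bar x_1\nek\bar x_d$ of $x_1\nek x_d$ form a minimal system of generators of $G$; hence for any $a_1\nek a_d\in\dbZ_p$ there is a continuous $1$-cocycle $\bar c\colon G\to\dbZ_p(1)_\theta$ with $\bar c(\bar x_i)=a_i$, and $c:=\bar c\circ\pi\colon S\to\dbZ_p(1)_\vartheta$ is then a continuous $1$-cocycle with $c(x_i)=a_i$ and $c|_R=0$; in particular $c(r)=0$.

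I will use this with $a_1=1$ and $a_2=\cdots=a_d=0$. By Corollary \ref{cor:c vanishing on C}, $c\inv(\{0\})$ is a closed subgroup of $S$ containing $x_2\nek x_d$, hence containing $S_1$, so $c$ vanishes on $S_1$. In case (i), the cocycle identity (\ref{cocycle condition}) gives $c(r)=c(x_1^\lambda s)=c(x_1^\lambda)+\vartheta(x_1)^\lambda c(s)=c(x_1^\lambda)$, as $s\in S_1$; and $c(x_1^\lambda)\neq0$, since if $\vartheta(x_1)=1$ then Lemma \ref{properties of c}(d) gives $c(x_1^\lambda)=\lambda c(x_1)=\lambda\neq0$, while if $\vartheta(x_1)\neq1$ it gives $c(x_1^\lambda)=(\vartheta(x_1)^\lambda-1)/(\vartheta(x_1)-1)$, whose vanishing would force $\vartheta(x_1)^\lambda=1$ — impossible because, writing $\lambda=p^vu$ with $u\in\dbZ_p^\times$, this yields $\vartheta(x_1)^{p^v}=1$, so that $\vartheta(x_1)$ is a nontrivial torsion element of the torsion free group $\Img\vartheta$. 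This contradicts $c(r)=0$.

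For case (ii) I first observe that $\vartheta(x_1)=1$: since $s,t\in[S,S]$ we get $\vartheta(s)=\vartheta(t)=1$, so $1=\vartheta(r)=\vartheta(x_1)^\lambda$, and the torsion argument above (with $\lambda\neq0$) forces $\vartheta(x_1)=1$. Then $\vartheta(x_1^\lambda)=1$ and $c(x_1^\lambda)=\lambda$ by Lemma \ref{properties of c}(d), while $t\in S^{(k+1,p)}$ gives $c(t)\in p^k\dbZ_p$ by Lemma \ref{lower central series}(b). Hence, by (\ref{cocycle condition}) and $c(s)=0$, $\vartheta(s)=1$,
\[
c(r)=c(x_1^\lambda st)=c(x_1^\lambda)+\vartheta(x_1^\lambda)\bigl(c(s)+\vartheta(s)c(t)\bigr)=\lambda+c(t).
\]
Since $\lambda\in p\dbZ_p\setminus p^k\dbZ_p$ we have $v_p(\lambda)\le k-1<k\le v_p(c(t))$, so $c(r)=\lambda+c(t)\neq0$, again contradicting $c(r)=0$. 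In both cases the assumption is untenable, so no such $\calG$ exists.

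I expect the main obstacle to be the cocycle construction in the first step: the fact that every value vector on a minimal generating set of $G$ is realized by a $1$-cocycle (equivalently, by a $1$-cocycle of $S$ killing $R$) is exactly the content of Corollary \ref{cor:a to c}/Proposition \ref{Labute} for Kummerian $\calG$, and is what fails without the Kummerian hypothesis. The remainder is routine $p$-adic bookkeeping; the one point needing care — the valuation of $c(x_1^\lambda)$ when $p=2$ — is circumvented in case (ii) by the preliminary deduction $\vartheta(x_1)=1$, after which $c(x_1^\lambda)$ is simply $\lambda$.
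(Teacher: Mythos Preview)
Your proof is correct and follows essentially the same approach as the paper: both argue that the specific cocycle with $c(x_1)=1$, $c(x_i)=0$ for $i\ge2$ (whose existence is guaranteed by the Kummerian hypothesis via Proposition~\ref{Labute}/Corollary~\ref{cor:a to c}) cannot vanish on $r$, using Lemma~\ref{properties of c}(d) in case~(i) and Lemma~\ref{lower central series}(b) in case~(ii). The only cosmetic difference is that you lift the cocycle to $S$ and compute there, whereas the paper works directly in $G$ with $\bar r,\bar s,\bar t$; the two are equivalent.
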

\begin{proof}
Let $\theta\colon G\to 1+p\dbZ_p$ be  a continuous homomorphism with $\Img(\theta)$  torsion free.
Consider the map $\alp\colon X\to \dbZ_p$ given by $\alp(x_1)=1$, and $\alp(x_i)=0$, $i=2,\ldots, d$.
Suppose that $c\colon G\to\dbZ_p(1)_{\theta}$ is a continuous 1-cocycle extending $\alp$.
We denote the image of $u\in S$ in $G$ by $\bar u$.
Thus $\bar r=1$, so $c(\bar r)=0$.

Consider case (i).
Corollary \ref{cor:c vanishing on C} implies that $c(\bar s)=0$.
Now (\ref{cocycle condition}) and Lemma \ref{properties of c}(d) imply that
\[
\label{eq:generalized AS}
0=c(\bar r)=
\begin{cases}
\lambda,& \hbox{ if }\theta(\bar x_1)=1,\\
{\displaystyle \frac{\theta(\bar x_1)^\lambda-1}{\theta(\bar x_1)-1}},&\hbox{ if } \theta(\bar x_1)\neq1.
\end{cases}
\]
This contradicts the assumptions on $\lambda$ and $\Img(\theta)$.

Next we consider case (ii).
Since $\theta$ is trivial on $[G,G]$, we have $\theta(\bar s)=\theta(\bar t)=1$, whence $\theta(\bar x_1)=1$.
By Corollary \ref{cor:c vanishing on C}, $c(\bar s)=0$, and by Lemma \ref{lower central series}(b), $c(\bar t)\in p^k\dbZ_p$.
Using Lemma \ref{properties of c} we compute
\[
0=c(\bar r)=c(\bar x_1^\lambda)+c(\bar s\bar t)=\lambda+c(\bar s)+c(\bar t)=\lambda+c(\bar t),
\]
contrary to the assumptions on $\lambda$.
\end{proof}

A special case of Theorem \ref{thm:generalized AS} (in case (ii)) was earlier proved using other techniques by Rogelstad \cite{Rogelstad15}*{Th.\ 5.2.1} (who states in \cite{Rogelstad15}*{p.\ iii}  that it is a joint research with J.\ Min\'a\v c and N.D.\ T\^an).

\begin{rem}
\rm
When $p=2$, one cannot remove in Theorem \ref{thm:generalized AS} the condition that $\calG$ is torsion free.
For example, $\dbZ/2$ can be completed to the pro-$2$ cyclotomic pair $\calG_\dbR$, which is Kummerian, by Example \ref{thm:Kummer example}(f) (or by Example \ref{exam:Kummerian with trivial theta}(3)).
Another example is the pro-$2$ group
 \[
\langle x_1,x_2,x_3\mid x_1^2x_2^4[x_2,x_3]=1\rangle,
 \]
which is the underlying group of $\calG_{\dbQ_2}$ \cite{NeukirchSchmidtWingberg}*{Ch.\ VII, \S5, p.\ 417}.
\end{rem}

\begin{exam}
\label{exam:2relations}
\rm
Let $S$ be the free pro-$p$ group on the basis $X=\{x_1,x_2,x_3\}$ of 3 elements.
Let $0\neq \lambda_1,\lambda_2\in p\dbZ_p$ with $\lambda_1\neq\lambda_2$.
Let $R$ be a closed normal subgroup of $S$ contained in $\Frat(S)$ and containing the two relations
\[
r_1=x_1^{\lambda_1}[x_1,x_3]\qquad \text{and}\qquad r_2=x_2^{\lambda_2}[x_2,x_3].
\]
We show that $G=S/R$ cannot be completed into a Kummerian torsion free cyclotomic pro-$p$ pair $\calG=(G,\theta)$.

Indeed, let $\theta\colon G\to1+p\dbZ_p$ be a continuous homomorphism and suppose that $(G,\theta)$ is torsion free.
Set $\theta_i=\theta(\bar x_i)$ for $i=1,2,3$.
Since $\theta$ is trivial on commutators, $\theta_1^{\lambda_1}=\theta_2^{\lambda_2}=1$.
As $\Img(\theta)$ is torsion free,  $\theta_1=\theta_2=1$.
Now assume that $c\colon G\to\dbZ_p(1)_{\theta}$ is a continuous 1-cocycle such that $c(\bar x_1)=c(\bar x_2)=1$.
we may lift it to a 1-cocycle $\hat c\colon S\to\dbZ_p(1)_{\theta}$.
By Lemma \ref{properties of c}(d)(e),
$0=\hat c(r_i)=\lambda_i+\theta_3\inv-1$, $i=1,2$, contrary to $\lambda_1\neq \lambda_2$.
Therefore  $\calG$ is not Kummerian.
\end{exam}

For a pro-$p$ group $G$, let $H^n(G)=H^n(G,\dbZ/p)$ denote the $n$-th cohomology group of $G$,
with $\dbZ/p$ considered as a trivial $G$-module.
Then $H^*(G)=\bigoplus_{n\geq0}H^n(G)$ is a graded  $\dbF_p$-algebra with respect to the cup product.

We denote the (graded) tensor algebra of an abelian group $B$ by
\[{\rm Tens}_*(B)=\bigoplus_{n\geq0}{\rm Tens}_n(B).\]
Given a graded algebra $A_*=\bigoplus_{n\ge0}A_n$ we write $(A_*)_{\rm dec}$ for its {\sl decomposable part}, i.e., its subalgebra generated by $A_1$.
The graded algebra $A_*$ is called  {\sl quadratic} if the canonical morphism ${\rm Tens}_*(A_1)\to A_*$ is surjective and its kernel is generated by homogenous elements of degree $2$.
Notably, the Milnor $K$-ring  $K^M_*(F)$ of a field $F$ is quadratic.
Using the celebrated Rost--Voevodsky Theorem one obtains the following fundamental consequence (see, e.g.,  \cite{Quadrelli14}*{\S2} or \cite{CheboluEfratMinac12}*{Remark 8.2}):

\begin{thm}
\label{thm:BK}
Let $F$ be a field containing a root of unity of order $p$.
Then $H^*(G_F(p))$ is a quadratic $\dbF_p$-algebra.
\end{thm}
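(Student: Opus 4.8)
The plan is to derive this from two ingredients: the Rost--Voevodsky norm residue isomorphism theorem, and the quadraticity of the Milnor $K$-ring recalled just above. First I would check that $K^M_*(F)/p$ is a quadratic $\dbF_p$-algebra. Since $K^M_*(F)={\rm Tens}_*(F^\times)/I$, where $I$ is the two-sided ideal generated by the Steinberg elements $a\tensor(1-a)$ with $a\in F^\times\setminus\{1\}$ --- all of which lie in degree $2$ --- the ring $K^M_*(F)$ is quadratic over $\dbZ$; and the identification ${\rm Tens}_*(F^\times)\tensor_\dbZ\dbF_p\isom{\rm Tens}_*\bigl(F^\times/(F^\times)^p\bigr)$ of graded algebras shows that $K^M_*(F)/p\isom{\rm Tens}_*\bigl(F^\times/(F^\times)^p\bigr)/\bar I$ with $\bar I$ generated in degree $2$.

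Next I would transport this to Galois cohomology. The norm residue theorem provides, for each $n\ge0$, an isomorphism $K^M_n(F)/p\isom H^n(G_F,\mu_p^{\tensor n})$, and these are multiplicative, hence assemble into an isomorphism of graded $\dbF_p$-algebras from $K^M_*(F)/p$ onto $\bigoplus_{n\ge0}H^n(G_F,\mu_p^{\tensor n})$. Since $F$ contains a root of unity of order $p$, the $G_F$-module $\mu_p$ is the trivial module $\dbZ/p$, so $\mu_p^{\tensor n}\isom\dbZ/p$ compatibly with cup products; thus the target is $H^*(G_F,\dbZ/p)$ with its cup-product algebra structure, and by the previous paragraph it is quadratic.

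Finally I would pass from $G_F$ to $G_F(p)$ by showing that the inflation map $\Inf\colon H^*(G_F(p))\to H^*(G_F,\dbZ/p)$ is an isomorphism of graded algebras. Set $L=F(p)$ and $G_L=\Ker(G_F\to G_F(p))$. By the standard property that $L$ admits no nontrivial $p$-extension, and because $\mu_p\subseteq L$, every element of $L^\times$ is a $p$-th power in $L$; hence $K^M_1(L)/p=L^\times/(L^\times)^p=0$, and since $K^M_*(L)/p$ is generated in degree $1$ we get $K^M_n(L)/p=0$ for all $n\ge1$. Applying the second paragraph to $L$ gives $H^n(G_L,\dbZ/p)=0$ for $n\ge1$, so in the Hochschild--Serre spectral sequence $E_2^{i,j}=H^i\bigl(G_F(p),H^j(G_L,\dbZ/p)\bigr)\Rightarrow H^{i+j}(G_F,\dbZ/p)$ only the row $j=0$ survives; it degenerates, and its edge homomorphism, which is $\Inf$, is an isomorphism. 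Therefore $H^*(G_F(p))\isom H^*(G_F,\dbZ/p)$ is quadratic, as claimed. This last step is also available directly from the references cited above, e.g.\ \cite{Quadrelli14}*{\S2} or \cite{CheboluEfratMinac12}.

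The genuinely deep input is the Rost--Voevodsky theorem, which I would invoke as a black box; beyond it, the points requiring care are the multiplicativity of the norm residue maps --- needed so that the second paragraph yields an algebra isomorphism and not merely a degree-wise one --- and the vanishing $H^{\ge1}(G_{F(p)},\dbZ/p)=0$ behind the third paragraph, which again rests on norm residue applied over the field $F(p)$. I do not expect either to present a serious obstacle; the real weight of the theorem lies entirely in the motivic-cohomology machinery being cited.
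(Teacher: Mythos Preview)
Your argument is correct and is precisely the standard derivation the paper has in mind: the paper does not actually prove this theorem, but states it as a consequence of the Rost--Voevodsky theorem with pointers to \cite{Quadrelli14}*{\S2} and \cite{CheboluEfratMinac12}*{Remark 8.2}, where exactly the three-step outline you give (quadraticity of $K^M_*(F)/p$, the norm residue isomorphism, and the inflation isomorphism $H^*(G_F(p))\isom H^*(G_F,\dbZ/p)$) is carried out. So your proposal matches the paper's intended approach.
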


This fundamental fact allows us to show that the converse of Theorem \ref{thm:Kummer example}(f) does not hold in general, i.e., not all Kummerian cyclotomic pairs are realizable as $\calG_F$ for some field $F$ as above.

\begin{exam}
\label{exam:commutators1}
\rm
Let $S$ be the free pro-$p$ group on two generators, and let $G=S\times S$.
Then $G^{\rm ab}=\dbZ_p^4$, so by Example \ref{exam:Kummerian with trivial theta}(1), the cyclotomic pro-$p$ pair $\calG=(G,1)$ is Kummerian.

On the other hand, take a continuous epimorphism $\varphi\colon S\to \dbZ_p$, and let
\[
K=\{ (s,s')\in G=S\times S \mid \varphi(s)=\varphi(s') \}.
\]
It was shown in \cite{Quadrelli14}*{Th.\ 5.6} that $H^*(K)$ is not quadratic.
Consequently, $\calG$ is not realizable as $\calG_F$ for any field $F$ containing a root of unity of order $p$.
\end{exam}

The following result was proved in \cite{CheboluEfratMinac12} using the quadraticness
of the ring $H^*(G_F(p))$ (Theorem \ref{thm:BK}):

\begin{thm}
\label{CEM}
Let $F$ be a field containing a root of unity of order $p$ and let $G=G_F(p)$.
Then the inflation map $\Inf\colon H^*(G/G^{(3,p)})_{\rm dec}\to H^*(G)$ is an isomorphism.
\end{thm}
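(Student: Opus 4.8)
The plan is to deduce the theorem from the quadraticness of $H^*(G)$ (Theorem~\ref{thm:BK}), reducing it to a statement about second cohomology. First, since $G^{(3,p)}\subseteq G^{(2,p)}=\Frat(G)$, inflation identifies the degree-$1$ parts: writing $V=H^1(G/\Frat(G))$, the maps induced by $G\to G/G^{(3,p)}\to G/\Frat(G)$ give isomorphisms $V\xrightarrow{\ \sim\ }H^1(G/G^{(3,p)})\xrightarrow{\ \sim\ }H^1(G)$, and since $\Frat(G/G^{(3,p)})=\Frat(G)/G^{(3,p)}$, every element of $H^1(G/G^{(3,p)})$ is inflated from $G/\Frat(G)$. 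As $\Inf\colon H^*(G/G^{(3,p)})\to H^*(G)$ is a homomorphism of graded $\dbF_p$-algebras, its restriction to the decomposable part has image the subalgebra of $H^*(G)$ generated by $H^1(G)$, which is all of $H^*(G)$ by quadraticness. Hence $\Inf\colon H^*(G/G^{(3,p)})_{\rm dec}\to H^*(G)$ is surjective, and it remains to prove that it is injective.

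For the injectivity I would work with the tensor algebra $T_*={\rm Tens}_*(V)$. The canonical homomorphisms of graded algebras $\pi_1\colon T_*\to H^*(G/G^{(3,p)})_{\rm dec}$ and $\pi_2\colon T_*\to H^*(G)$ are both surjective ($\pi_2$ precisely because $H^*(G)$ is generated in degree~$1$), with kernels $I$ and $J$; since $\Inf\circ\pi_1=\pi_2$ we have $I\subseteq J$, and the theorem is equivalent to $I=J$. Because $H^*(G)$ is quadratic, $J$ is generated as a two-sided ideal by $J\cap T_2$; as $I$ is an ideal, $I=J$ will follow once $I\cap T_2=J\cap T_2$. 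Now $I\cap T_2=\ker(V\otimes V\to H^2(G/G^{(3,p)}))$ and $J\cap T_2=\ker(V\otimes V\to H^2(G))$, and the second map factors as the first followed by $\Inf\colon H^2(G/G^{(3,p)})\to H^2(G)$; so $I\cap T_2\subseteq J\cap T_2$, with equality exactly when $\Inf$ is injective on $H^2(G/G^{(3,p)})_{\rm dec}$ (the image of the cup product $V\otimes V\to H^2(G/G^{(3,p)})$). Thus everything reduces to this degree-$2$ injectivity.

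To establish it, observe that since every degree-$1$ class of $G/G^{(3,p)}$ is inflated from $G/\Frat(G)$ (first paragraph) and cup products of inflated classes are inflated, $H^2(G/G^{(3,p)})_{\rm dec}$ lies in the image of $\Inf\colon H^2(G/\Frat(G))\to H^2(G/G^{(3,p)})$; hence it is enough to show that the kernels of $\Inf\colon H^2(G/\Frat(G))\to H^2(G)$ and of $\Inf\colon H^2(G/\Frat(G))\to H^2(G/G^{(3,p)})$ coincide, as the first map factors through the second. I would read off both kernels from the five-term exact sequences of the extensions
\[
1\to\Frat(G)\to G\to G/\Frat(G)\to1,\qquad 1\to\Frat(G)/G^{(3,p)}\to G/G^{(3,p)}\to G/\Frat(G)\to1.
\]
Using $G^{(3,p)}=\Frat(G)^p[G,\Frat(G)]$ one gets $H^1(\Frat(G))^{G/\Frat(G)}=\Hom(\Frat(G)/G^{(3,p)},\dbZ/p)=H^1(\Frat(G)/G^{(3,p)})$, with trivial $G/\Frat(G)$-action; since $H^1(G/\Frat(G))\to H^1(G)$ is an isomorphism, in each sequence the transgression is injective with image the kernel of the corresponding inflation on $H^2(G/\Frat(G))$, and by naturality of transgression (applied to the morphism of extensions $G\to G/G^{(3,p)}$) the two transgressions coincide. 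Hence the two kernels are equal, and $\Inf\colon H^2(G/G^{(3,p)})\to H^2(G)$ is injective on the image of $H^2(G/\Frat(G))$, in particular on $H^2(G/G^{(3,p)})_{\rm dec}$.

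I expect this last step to be the main obstacle: it is the only point where the structure of the lower $p$-central series is really used (through $G^{(3,p)}=\Frat(G)^p[G,\Frat(G)]$ and the computation of the $G$-invariants of $H^1(\Frat(G))$), and it is purely group-cohomological — quadraticness is invoked only to promote the degree-$2$ conclusion to all degrees. When $p=2$ the Bockstein classes, being of the form $\chi\cup\chi$, are themselves decomposable, but the argument above is insensitive to this.
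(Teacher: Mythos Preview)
The paper does not actually prove Theorem~\ref{CEM}; it merely quotes it from \cite{CheboluEfratMinac12}, noting that the proof there proceeds ``using the quadraticness of the ring $H^*(G_F(p))$''. Your argument is a correct reconstruction of a proof along these lines: surjectivity follows immediately from quadraticness, and your reduction of injectivity to degree~$2$ via the tensor-algebra presentation, followed by the comparison of the two five-term sequences, is sound. The key computation $H^1(\Frat(G),\dbZ/p)^G=\Hom(\Frat(G)/G^{(3,p)},\dbZ/p)=H^1(\Frat(G)/G^{(3,p)},\dbZ/p)^{G/\Frat(G)}$ is exactly the identity $G^{(3,p)}=\Frat(G)^p[G,\Frat(G)]$ together with centrality of $\Frat(G)/G^{(3,p)}$ in $G/G^{(3,p)}$, and naturality of transgression then gives that the two inflation kernels in $H^2(G/\Frat(G))$ agree. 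Since every decomposable class in $H^2(G/G^{(3,p)})$ is inflated from $G/\Frat(G)$, this yields the required degree-$2$ injectivity.
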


In \cite{EfratMinac16}, an even stronger version of this result is shown, in which $G^{(3,p)}$ is replaced by the third term of the {\sl  $p$-Zassenhaus filtration} of $G$.

\begin{exam}
\label{exam:commutators2}
\rm
Consider the pro-$p$ group
\[
G=\langle x_1,x_2,x_3\ |\ [[x_1,x_2],x_3]=1\rangle.
\]
The cyclotomic pro-$p$ pair $(G,1)$ is Kummerian, by Example \ref{exam:Kummerian with trivial theta}(1).

On the other hand, Theorem \ref{CEM} implies that $G$ is not realizable as the maximal pro-$p$ Galois group of any field containing a root of unity of order $p$.
Indeed, let $S$ be the free pro-$p$ group on 3 generators.
Then $S/S^{(3,p)}\isom G/G^{(3,p)}$, but $H^2(S)=0\neq H^2(G)$, since $G$ is not free pro-$p$ \cite{NeukirchSchmidtWingberg}*{Prop.\ 3.5.9}.
Theorem \ref{CEM} therefore implies that at least one of the groups $S$ and $G$ is not a Galois group as above.
But $S$ is well known to be realizable as a Galois group of this form, so $G$ is necessarily not (Cf.\ \cite{CheboluEfratMinac12}*{\S9} also for other examples of this type).
\end{exam}


\section{Connections with the triple Massey product criterion}
\label{section on Massey products}
The goal of this section is to give an example of a pro-$p$ group $G$ which can be ruled out from being a maximal pro-$p$ Galois group of a field containing a root of unity of order $p$ (resp., 4) when $p>2$ (resp., $p=2$) using the Kummerian property, but not using other known cohomological properties of such Galois groups: the Artin--Schreier/Becker restriction on finite subgroups,   quadraticness (Theorem \ref{thm:BK}),  and the 3-fold Massey product property for such groups (see below).
Thus  the Kummerian property appears to be a genuine new restriction on the structure of maximal pro-$p$ Galois groups.

Specifically, our example is $G=S/R$, where $S$ is the free pro-$p$ group on basis $x_1,\ldots, x_d$, with $d\geq3$ odd, and $R$ is its closed normal subgroup generated by
\begin{equation}
\label{special r}
 r  = x_1^{p^f}[x_2,x_3][x_4,x_5]\cdots[x_{d-1},x_d]
\end{equation}
for $f\geq1$.
When $p=2$ we further assume that $f\geq2$ (later on we will also require that $f\geq2$ when $p=3$).
It is a consequence of Theorem \ref{thm:generalized AS} that $G$ cannot be completed into a Kummerian torsion free cyclotomic pro-$p$ pair.
Hence it is not realizable as $G_F(p)$ for $F$ as above.
For $p\neq2$ and $d=3$, this was earlier shown in \cite{KochloukovaZalesskii05} using other methods.

We will use the connections between presentations of pro-$p$ groups using generators and relations on one hand and cup products and Bockstein elements on the other hand, as described e.g. in \cite{Labute67a}, \cite{NeukirchSchmidtWingberg}*{Ch.\ III, \S9}.
Let $S$ be a free pro-$p$ group, let $R$ be a closed normal subgroup of $S$, and let $G=S/R$.
There is a non-degenerate bilinear map
\[
(\cdot,\cdot)_R\colon \quad R/R^p[R,S]\times H^1(R)^S\to\dbF_p, \quad  (\bar g,\varphi)=-\varphi(g).
\]
In particular, $(\cdot,\cdot)_S$ gives a perfect duality between $S/\Frat(S)$ and $H^1(S)$.
When in addition $R$ is contained in $\Frat(S)$, the inflation $\Inf\colon H^1(G)\to H^1(S)$ is an isomorphism.
 As $H^2(S)=0$, the 5-term sequence implies that the transgression map
$\trg\colon H^1(R)^S\to H^2(G)$ is an isomorphism.
We obtain a non-degenerate bilinear map
\[
(\cdot,\cdot)'_R\colon \quad R/R^p[R,S]\times H^2(G)\to\dbF_p, \quad  (\bar g,\alp)'=(\trg\inv\alp)(g).
\]

Going back to our example, let $\chi_1,\ldots,\chi_d$ be the $\dbF_p$-linear basis of $H^1(G)=H^1(S)$
which is dual to the images $\bar x_1,\ldots,\bar x_d$ of $x_1,\ldots,x_d$ in $S/\Frat(S)$, with respect to $(\cdot,\cdot)_S$.
Let $\Bock_{G,p}$ be the {\sl Bockstein map}, i.e., the connecting homomorphism corresponding to the short exact sequence of $G$-modules
\[
0\to\dbZ/p\to\dbZ/p^2\to\dbZ/p\to0.
\]

\begin{prop}
\label{prop:cd2}
\begin{enumerate}
\item[(a)]
If $2\leq i\leq d$ or  $f\geq2$, then $(\bar r,\Bock_{G,p}(\chi_i))'_R=0$.
\item[(b)]
For $1\leq i\leq j\leq d$ one has $\chi_i\cup\chi_j\neq0$ if and only if $i$ is even and $j=i+1$.
\item[(c)]
For $\psi\in H^1(G)$ one has $\psi\cup H^1(G)=\{0\}$ if and only if $\psi=a\chi_1$ for some $a\in \dbF_p$.
\item[(d)]
$H^2(G)$ is one-dimensional.
\item[(e)]
$\cd(G)=2$.
\item[(f)]
The graded $\dbF_p$-algebra $H^*(G)$ is quadratic.
\item[(g)]
$G$ is torsion free.
\end{enumerate}
\end{prop}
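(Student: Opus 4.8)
The plan is to extract all seven statements from the perfect duality $(\cdot,\cdot)'_R\colon R/R^p[R,S]\times H^2(G)\to\dbF_p$ recorded above, together with the classical dictionary (see \cite{Labute67a}, \cite{NeukirchSchmidtWingberg}*{Ch.\ III, \S9}) that reads off the cup products and Bockstein classes of the $\chi_i$ from the expansion of the relator $r$ modulo $S^{(3,p)}$. Statement (d) comes almost for free: since $R$ is the closed normal subgroup of $S$ generated by the single nontrivial element $r$, a Frattini argument shows that $R/R^p[R,S]$ is cyclic of order $p$, spanned by the image $\bar r$; via $\trg$ this gives $\dim_{\dbF_p}H^2(G)=1$, so $(\cdot,\cdot)'_R$ is a perfect pairing of one-dimensional spaces. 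In particular a class $\alpha\in H^2(G)$ is trivial if and only if $(\bar r,\alpha)'_R=0$, and everything reduces to computing such pairings.

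For (a) and (b) I would compute the image of $r$ in $S^{(2,p)}/S^{(3,p)}$. Since $x_1^{p^2}=(x_1^p)^p\in(S^{(2,p)})^p\subseteq S^{(3,p)}$, the factor $x_1^{p^f}$ already lies in $S^{(3,p)}$ whenever $f\geq2$, while for $f=1$ (only relevant when $p$ is odd) it survives as the basis element ``$x_1^p$''. Hence
\[
r\equiv x_1^{\eps p}\cdot[x_2,x_3][x_4,x_5]\cdots[x_{d-1},x_d]\pmod{S^{(3,p)}},
\]
with $\eps=1$ if $f=1$ and $\eps=0$ otherwise (and the power factor absent when $p=2$). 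The dictionary identifies $(\bar r,\Bock_{G,p}(\chi_i))'_R$ with a fixed nonzero scalar times the coefficient of the $p$-th power of $x_i$ in this expansion, namely $\eps$ for $i=1$ and $0$ for $i\geq2$ --- this is exactly (a). Likewise, for $i<j$ the scalar $(\bar r,\chi_i\cup\chi_j)'_R$ is, up to a fixed nonzero scalar, the coefficient of $[x_i,x_j]$, which equals $1$ precisely for the pairs $(2,3),(4,5),\dots,(d-1,d)$, i.e.\ for $i$ even and $j=i+1$; the diagonal entries $\chi_i\cup\chi_i$ vanish automatically for $p$ odd, and for $p=2$ they equal $\Bock_{G,2}(\chi_i)$, which is $0$ by (a) since $f\geq2$. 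Perfectness of $(\cdot,\cdot)'_R$ turns these coefficient computations into the stated (non)vanishing of classes, giving (b).

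Parts (c), (e) and (g) are then quick. Viewing the cup product as an alternating bilinear form $\mu\colon H^1(G)\times H^1(G)\to H^2(G)\isom\dbF_p$, (b) and $\dim H^2(G)=1$ show that the Gram matrix of $\mu$ in the basis $\chi_1\nek\chi_d$ is the standard symplectic form on the span of $\chi_2\nek\chi_d$ and zero on $\chi_1$; hence $\mathrm{rad}(\mu)=\dbF_p\chi_1$, which is (c). For (e): $\cd(G)\geq2$ because $H^2(G)\neq0$, while $\cd(G)\leq2$ because the initial form of $r$ displayed above is a nonzero homogeneous degree-$2$ element of the free (restricted) Lie algebra on $x_1\nek x_d$ that is not a $p$-th power of a degree-$1$ element; thus $\{r\}$ is a strongly free sequence and $G$ is a mild one-relator pro-$p$ group in Labute's sense, whence $\cd(G)=2$. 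I expect this bound to be the main obstacle, in that it genuinely rests on external input (mildness, or the one-relator pro-$p$ dimension criterion) rather than on our pairing. Finally (g) follows at once: a pro-$p$ group of finite cohomological dimension contains no element of order $p$, since $\cd(\dbZ/p)=\infty$ and cohomological dimension does not increase on closed subgroups; so $G$ is torsion free.

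For (f) I would use that, by (d) and (e), $\dim H^0(G)=\dim H^2(G)=1$, $\dim H^1(G)=d$, and $H^n(G)=0$ for $n\geq3$. Surjectivity of ${\rm Tens}_*(H^1(G))\to H^*(G)$ is immediate, as $H^2(G)=\dbF_p(\chi_2\cup\chi_3)$ by (b) and higher degrees are zero. For the relations, since $H^n(G)=0$ for $n\geq3$ it suffices to check that the degree-$2$ part $\Ker\mu$ of the kernel generates in degree $3$, i.e.\ that $(\Ker\mu)\tensor H^1(G)+H^1(G)\tensor(\Ker\mu)=H^1(G)^{\tensor3}$. Dually, any functional on $H^1(G)^{\tensor3}$ killing both summands must --- because $\Ker\mu$ is a hyperplane --- have the form $(u,v,w)\mapsto\mu(u,v)\ell_1(w)=\ell_2(u)\mu(v,w)$ for linear functionals $\ell_1,\ell_2$; comparing the two expressions and using that $\mu$ is alternating forces $\ell_1(w)u+\ell_2(u)w\in\mathrm{rad}(\mu)=\dbF_p\chi_1$ for all $u,w$, and evaluating on the basis vectors (first on the $\chi_i$ with $i\geq2$, then on $\chi_1$) yields $\ell_1=\ell_2=0$, so the functional is zero. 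Hence the two submodules span $H^1(G)^{\tensor3}$, the kernel is generated in degree $2$, and $H^*(G)$ is quadratic. The only other delicate point is pinning down the normalizations in (a)--(b), in particular the identity $\chi_i\cup\chi_i=\Bock_{G,2}(\chi_i)$ in characteristic $2$ and the precise sign conventions in the dictionary.
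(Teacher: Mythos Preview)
Your treatment of (a)--(d) and (g) is essentially the paper's argument: the pairing $(\cdot,\cdot)'_R$ reduces the computations of Bockstein and cup-product values to reading off the coefficients of $r$ modulo $S^{(3,p)}$ via \cite{NeukirchSchmidtWingberg}*{Prop.\ 3.9.13, 3.9.14}, and (c), (d), (g) follow as you say. The paper also uses the identity $\chi_i\cup\chi_i=\Bock_{G,2}(\chi_i)$ for $p=2$ exactly as you do.

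The one place where your proposal diverges from the paper and is genuinely weaker is (e). The paper does \emph{not} invoke mildness in the $p$-restricted Lie algebra; it applies \cite{Labute67b}*{Th.\ 4} to the decomposition $r=u^{p^f}v$ with $u=x_1$, $v=[x_2,x_3]\cdots[x_{d-1},x_d]$ in the \emph{ordinary} lower central filtration, where $\omega(u)=1$, $\omega(v)=2$, and checks the numerical inequality $\tfrac{1}{f}(f-1+\omega(v)/\omega(u))=(f+1)/f<p$ (using $f\geq2$ when $p=2$). Your criterion ``nonzero degree-$2$ initial form, not a $p$-th power of a degree-$1$ element, hence strongly free'' is not a standard statement, and it is precisely the $f=1$ case that is delicate: there the initial form in $S^{(2,p)}/S^{(3,p)}$ is $x_1^{[p]}+[x_2,x_3]+\cdots$, which \emph{does} contain a $p$-th-power term, and it is not clear from what you wrote why this should force $\cd(G)\leq2$. (Recall that $\langle x_1\mid x_1^p\rangle=\dbZ/p$ has nonzero degree-$2$ initial form $x_1^{[p]}$ and infinite cohomological dimension, so some extra hypothesis is certainly needed.) You correctly flag (e) as the external input; the point is that Labute's explicit inequality, not a vague appeal to mildness, is what closes the gap.

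For (f), interestingly, you give a more detailed argument than the paper, which simply records that (f) follows from (d) and (e). Your duality computation showing $(\Ker\mu)\tensor H^1(G)+H^1(G)\tensor(\Ker\mu)=H^1(G)^{\tensor3}$ is correct and makes explicit why the degree-$2$ relations suffice; this is a nice complement to the paper's terse claim.
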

\begin{proof}
Let $\bar r$ be the image of $r$ in $R/R^p[S,R]$.

\medskip

(a) \quad
By \cite{NeukirchSchmidtWingberg}*{Prop. 3.9.14}, $(\bar r,\Bock_{G,p}(\chi_i))'_R$ is $0$ for $2\leq i\leq d$, and is $p^{f-1}$ for $i=1$.
Further, $p^{f-1}=0\in\dbF_p$ for $f\geq2$.

\medskip

(b) \quad
By \cite{NeukirchSchmidtWingberg}*{Prop.\ 3.9.13}, one has
\[
(\bar r,\chi_2\cup\chi_3)'_R=(\bar r,\chi_4\cup\chi_5)'_R=\cdots=(\bar r,\chi_{d-1}\cup\chi_d)'_R=1,
\]
and $(\bar r,\chi_i\cup\chi_j)'_R=0$ for any other $i<j$.

Next suppose that $i=j$.
If $p>2$ then $\chi_i\cup\chi_i=0$, by the anti-symmetry of the cup product.
When $p=2$ we have $\chi_i\cup\chi_i=\Bock_{G,p}(\chi_i)$ \cite{EfratMinac11}*{Lemma 2.4} and by assumption, $f\geq2$.
Thus,  by (a), $(\bar r,\chi_i\cup\chi_i)'_R=(\bar r,\Bock_{G,p}(\chi_i))'_R=0$ in this case as well.

\medskip

(c) \quad
This follows from (b).

\medskip

(d) \quad
This follows from the fact that $G$ is a one-relator pro-$p$ group \cite{NeukirchSchmidtWingberg}*{Cor.\ 3.9.5}.

\medskip

(e) \quad
We use a method of Labute from \cite{Labute67b}.
Let $S^{(i)}$, $i=1,2,\ldots,$ be the (pro-$p$) lower central series of $S$, defined inductively by $S^{(1)}=S$ and $S^{(i+1)}=[S,S^{(i)}]$.
For $g\in S$ we define $\omega(g)=\sup\{i\mid g\in S^{(i)}\}$.
Set
\[
u= x_1, \ v=[x_2,x_3]\cdots[x_{d-1},x_d].
\]
Then $r=u^{p^f}v$, and one has $\omega(u)=1$ and $\omega(v)=2$.
By assumption, $f\ge 2$ when $p=2$, so
\[
\frac1f\left( f-1+\frac{\omega(v)}{\omega(u)}\right)=\frac{f+1}f<p.
\]
Therefore \cite{Labute67b}*{Th.\ 4} implies that $\cd(G)\leq2$.
By (b), $H^2(G)\neq0$.

\medskip

(f)\quad
This follows from  (d) and (e).

\medskip

(g) \quad
This follows from (e).
\end{proof}

\begin{rems}
\label{rem:mild}
\rm
(1) \quad
By Proposition \ref{prop:cd2}(g), $G$ cannot be ruled out from being a Galois group $G_F(p)$ as above by means of the Artin--Schreier/Becker restriction, namely that the nontrivial finite subgroups in such a group can only be of order $2$.
By Proposition \ref{prop:cd2}(f) it cannot be ruled out from being of the form $G_F(p)$ by means of the Voevodsky--Rost restriction, as in Theorem \ref{thm:BK}.

\medskip
(2) \quad
Since $\chi_1\cup H^1(G)=0$ (Proposition \ref{prop:cd2}(c)), $G$ is not a pro-$p$ Demu\v skin group.

\medskip

(3) \quad
By contrast, when $p=2$ and $f=1$, we have  by \cite{NeukirchSchmidtWingberg}*{Prop. 3.9.14}, $(\bar r,\Bock(\chi_1))'_R=1$, whence $\chi_1\cup\chi_1=\Bock(\chi_1)\neq0$.
Therefore $G$ is in this case a pro-$2$ Demu\v skin group.

\medskip

(4) \quad
When $p\neq2$, the fact that $\cd(G)=2$  also follows from Schmidt's \cite{Schimdt10}*{Th.\ 6.2}.
\end{rems}

We now explain the triple Massey product restriction on maximal pro-$p$ Galois groups.
Let $F$ be a field containing a root of unity of order $p$.
In addition to the ``internal" ring structure of $H^*(G_F(p))$ as a graded ring with the cup product, which is fully determined by the Voevodsky--Rost theorem (see \S\ref{sec:not Galois}), it carries an ``external" structure which can be used to rule out more groups from being maximal pro-$p$ Galois groups.
Specifically, there are known constrains on the {\it triple Massey product} in such groups.
Recall that for a pro-$p$ group $G$ and for $n\geq2$, the {\it $n$-fold Massey product} on $H^1(G)$ is a multi-valued map
\[
H^1(G)\times \cdots\times H^1(G)\longrightarrow H^2(G).
\]
For more details on this operation in the general homological context see \cite{Dwyer75}, \cite{Kraines66}.
See e.g.\  \cite{Efrat14}, \cite{EfratMatzri16}, \cite{MinacTan16b}, \cite{MinacTan16}, \cite{Morishita04}, \cite{Sharifi07}, \cite{Vogel04},  \cite{Wickelgren12a}, or \cite{Wickelgren12b} for  Massey products in the profinite and Galois-theoretic context.

As observed by Dwyer \cite{Dwyer75} for discrete groups (see also \cite{Efrat14}  for the profinite context)
Massey products $H^1(G)^n\to H^2(G)$ for a pro-$p$ group $G$ can be interpreted in terms of unipotent upper-triangular representations of $G$ as follows.
Let $I_{n+1}$ denote the $(n+1)\times(n+1)$ identity matrix
and let and $E_{i,j}$ be the $(n+1)\times(n+1)$ matrix with $1$ at entry $(i,j)$ and $0$ elsewhere.
Let $\dbU_{n+1}(\dbF_p)$ be the group of all unipotent upper-triangular $(n+1)\times(n+1)$-matrices over $\dbF_p$.
Its center $Z(\dbU_{n+1}(\dbF_p))$ consists of all matrices $I_{n+1}+aE_{1,n+1}$ with $a\in\dbF_p$.
Let \[\bar \dbU_{n+1}(\dbF_p)=\dbU_{n+1}(\dbF_p)/Z(\dbU_{n+1}(\dbF_p)).\]

\begin{lem}
\label{lem:massey representations}
Let $G$ be a pro-$p$ group and let $\varphi_1,\ldots,\varphi_n\in H^1(G)$, $n\geq2$.

\begin{enumerate}
\item[(a)]
The  Massey product $\langle\varphi_1,\ldots,\varphi_n\rangle$ is non-empty if and only if
there exists a continuous homomorphism $\gamma \colon G\to \bar\dbU_{n+1}(\dbF_p)$ such that
$\gamma_{i,i+1}=\varphi_i$ for $i=1,\ldots,n$.
\item[(b)]
The  Massey product $\langle\varphi_1,\ldots,\varphi_n\rangle$ contains $0$ if and only if
there exists a continuous homomorphism $\gamma \colon G\to \dbU_{n+1}(\dbF_p)$ such that
$\gamma_{i,i+1}=\varphi_i$ for $i=1,\ldots,n$.
\end{enumerate}
\end{lem}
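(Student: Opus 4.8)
The plan is to reduce both parts to the standard cochain-level description of Massey products via \emph{defining systems} (Dwyer \cite{Dwyer75}; for the continuous/profinite version see \cite{Efrat14}), exploiting the fact that all the cochains and homomorphisms in sight take values in finite sets and hence factor through finite quotients of $G$, so the combinatorial arguments of the discrete case apply verbatim. Recall that $H^1(G)$ is the group of continuous homomorphisms $G\to\dbF_p$, and that for $n\geq2$ a defining system for $\langle\varphi_1,\ldots,\varphi_n\rangle$ is a family of continuous $1$-cochains $a_{ij}\colon G\to\dbF_p$, indexed by the pairs $1\leq i<j\leq n+1$ with $(i,j)\neq(1,n+1)$, such that each $a_{i,i+1}$ equals $\varphi_i$ (the unique representing cocycle, as $B^1(G,\dbF_p)=0$) and $da_{ij}=-\sum_{i<k<j}a_{ik}\cup a_{kj}$, where $(b\cup c)(g,h)=b(g)c(h)$ for $1$-cochains with trivial coefficients. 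Its value is the class of the $2$-cocycle $\sum_{1<k<n+1}a_{1k}\cup a_{k,n+1}$, and $\langle\varphi_1,\ldots,\varphi_n\rangle$ is by definition the set of values of all defining systems. Thus $\langle\varphi_1,\ldots,\varphi_n\rangle$ is non-empty iff a defining system exists, and contains $0$ iff some defining system has value $0$.

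First I would set up the dictionary between matrices and cochains. For a function $\gamma\colon G\to\dbU_{n+1}(\dbF_p)$, write $\gamma=I_{n+1}+\sum_{i<j}\gamma_{ij}E_{i,j}$, so the entry functions are the $\gamma_{ij}\colon G\to\dbF_p$. Computing the $(i,j)$-entry of $\gamma(g)\gamma(h)$ shows that $\gamma$ is a continuous homomorphism iff each $\gamma_{ij}$ is continuous and $\gamma_{ij}(gh)=\gamma_{ij}(g)+\gamma_{ij}(h)+\sum_{i<k<j}\gamma_{ik}(g)\gamma_{kj}(h)$ for all $g,h$. For $j=i+1$ the sum is empty, so every superdiagonal entry $\gamma_{i,i+1}$ is automatically a homomorphism, and in general the displayed identity is exactly $d\gamma_{ij}=-\sum_{i<k<j}\gamma_{ik}\cup\gamma_{kj}$. (With a different sign convention for cup products of $1$-cochains one replaces $\gamma_{ij}$ by $\epsilon_{ij}\gamma_{ij}$ for suitable fixed $\epsilon_{ij}\in\{\pm1\}$; this is the only bookkeeping in the argument.) Hence a continuous homomorphism $\gamma\colon G\to\dbU_{n+1}(\dbF_p)$ with $\gamma_{i,i+1}=\varphi_i$ for all $i$ is precisely a defining system $(\gamma_{ij})_{(i,j)\neq(1,n+1)}$ for $\langle\varphi_1,\ldots,\varphi_n\rangle$, together with one further continuous $1$-cochain $\gamma_{1,n+1}$ satisfying $d\gamma_{1,n+1}=-\sum_{1<k<n+1}\gamma_{1k}\cup\gamma_{k,n+1}$.

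For part (a), since $Z(\dbU_{n+1}(\dbF_p))=\{I_{n+1}+aE_{1,n+1}\mid a\in\dbF_p\}$, the projection $\dbU_{n+1}(\dbF_p)\to\bar\dbU_{n+1}(\dbF_p)$ simply deletes the $(1,n+1)$-entry; and for $(i,j)\neq(1,n+1)$ the product formula above never involves a $(1,n+1)$-entry of either factor, so the entry functions of a homomorphism $G\to\bar\dbU_{n+1}(\dbF_p)$ obey the same relations $d\gamma_{ij}=-\sum_{i<k<j}\gamma_{ik}\cup\gamma_{kj}$ as before. By the dictionary, a continuous homomorphism $\gamma\colon G\to\bar\dbU_{n+1}(\dbF_p)$ with $\gamma_{i,i+1}=\varphi_i$ is exactly a defining system for $\langle\varphi_1,\ldots,\varphi_n\rangle$, with no constraint imposed on a $(1,n+1)$-slot, so such a $\gamma$ exists iff a defining system exists, i.e. iff the Massey product is non-empty.

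For part (b), a continuous homomorphism $\gamma\colon G\to\dbU_{n+1}(\dbF_p)$ with $\gamma_{i,i+1}=\varphi_i$ is, by the dictionary, a defining system $(\gamma_{ij})_{(i,j)\neq(1,n+1)}$ together with a $1$-cochain $\gamma_{1,n+1}$ with $d\gamma_{1,n+1}=-\sum_{1<k<n+1}\gamma_{1k}\cup\gamma_{k,n+1}$; the existence of such a $\gamma_{1,n+1}$ is precisely the statement that this defining system's value $2$-cocycle $\sum_{1<k<n+1}\gamma_{1k}\cup\gamma_{k,n+1}$ is a coboundary, i.e. that its value is $0$. Hence such a $\gamma$ exists iff some defining system has value $0$, i.e. iff $0\in\langle\varphi_1,\ldots,\varphi_n\rangle$. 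The one place where actual care is needed, as flagged above, is checking that the matrix-multiplication identities match the defining-system relations under the chosen cup-product and bar conventions (and, in part (b), that the $(1,n+1)$-relation matches the formula for the Massey value); once those conventions are pinned down, both equivalences are immediate, and continuity is never an obstacle since each entry function takes finitely many values and factors through a finite quotient of $G$.
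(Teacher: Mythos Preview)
Your argument is correct and is exactly the standard Dwyer defining-system argument. Note that the paper does not actually supply a proof of this lemma: it states the result and attributes it to Dwyer \cite{Dwyer75} (with \cite{Efrat14} for the profinite setting), so your write-up is simply a spelled-out version of the reference the paper invokes.
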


Here $\gamma_{i,i+1}\colon G\to \dbF_p$ is the projection of $\gam$ on the $(i,i+1)$-entry.
Note that it is  a group homomorphism.
We call a Massey product $\langle\varphi_1,\ldots,\varphi_n\rangle$ {\sl essential} if it is non-empty, but does not contain $0$.

Next we focus on triple Massey products  $\langle\varphi_1,\varphi_2,\varphi_3\rangle$, with $\varphi_1,\varphi_2,\varphi_3\in H^1(G)$.
If this product is non-empty, then it is a coset of $\varphi_1\cup H^1(G)+\varphi_3\cup H^1(G)$ in $H^2(G)$ \cite{MinacTan16}*{Remark 2.2}.
We deduce:

\begin{lem}
\label{essential 3MP}
Suppose that $\dim_{\dbF_p}H^2(G)=1$.
If $\langle\varphi_1,\varphi_2,\varphi_3\rangle$ is essential, then it contains a single element, and $\varphi_1\cup H^1(G)=\varphi_3\cup H^1(G)=\{0\}$.
\end{lem}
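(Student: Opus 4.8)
The plan is to leverage the structural fact, recalled just before the statement from \cite{MinacTan16}*{Remark 2.2}, that a non-empty triple Massey product $\langle\varphi_1,\varphi_2,\varphi_3\rangle$ is a coset in $H^2(G)$ of the subspace
\[
W=\varphi_1\cup H^1(G)+\varphi_3\cup H^1(G).
\]
Since $\langle\varphi_1,\varphi_2,\varphi_3\rangle$ is assumed essential, it is in particular non-empty, so this coset description applies. Note that $\varphi_1\cup H^1(G)$ and $\varphi_3\cup H^1(G)$ are genuine $\dbF_p$-subspaces of $H^2(G)$, by the bilinearity of the cup product, so $W$ is an $\dbF_p$-subspace as well.

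Next I would invoke the hypothesis $\dim_{\dbF_p}H^2(G)=1$: an $\dbF_p$-subspace $W$ of a one-dimensional space is either $\{0\}$ or all of $H^2(G)$. If $W=H^2(G)$, then the coset $\langle\varphi_1,\varphi_2,\varphi_3\rangle$ equals $H^2(G)$ and in particular contains $0$, contradicting the assumption that the product is essential. Hence $W=\{0\}$, which forces both summands to vanish, i.e. $\varphi_1\cup H^1(G)=\{0\}$ and $\varphi_3\cup H^1(G)=\{0\}$.

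Finally, with $W=\{0\}$ the Massey product $\langle\varphi_1,\varphi_2,\varphi_3\rangle$ is a coset of the trivial subspace of $H^2(G)$, hence a single element, which is exactly the remaining assertion. There is no real obstacle here: once the coset description of the triple Massey product is available, the argument is a one-line classification of subspaces of a line; the only point requiring a moment's care is precisely the observation that the cup-product images are subspaces, so that their sum $W$ is one too.
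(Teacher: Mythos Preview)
Your argument is correct and is exactly the deduction the paper has in mind: the lemma is stated with ``We deduce'' immediately after recalling that a non-empty $\langle\varphi_1,\varphi_2,\varphi_3\rangle$ is a coset of $\varphi_1\cup H^1(G)+\varphi_3\cup H^1(G)$, and your classification of subspaces of a one-dimensional $H^2(G)$ is precisely that deduction spelled out.
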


In the Galois pro-$p$ context one has the following result of Matzri \cite{Matzri14}; see also \cite{EfratMatzri16} and \cite{MinacTan16b}.

\begin{thm}\label{thm:massey vanishes}
Let $F$ be a field containing a root of unity of order $p$.
Let $\varphi_1,\varphi_2,\varphi_3\in H^1(G_F(p))$.
Then $\langle\varphi_1,\varphi_2,\varphi_3\rangle$ is not essential.
\end{thm}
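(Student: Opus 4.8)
The plan is to carry out the strategy of Matzri \cite{Matzri14} (see also \cite{EfratMatzri16} and \cite{MinacTan16b}). First I would dispose of the formal reductions: one may assume the triple product is non-empty and must show it contains $0$, and by Kummer theory $H^1(G_F(p))\cong F^\times/(F^\times)^p$, so $\varphi_i=\chi_{a_i}$ for suitable $a_i\in F^\times$; if some $a_i\in(F^\times)^p$ then $\varphi_i=0$ and the product contains $0$ trivially, so assume otherwise. Non-emptiness, via Lemma \ref{lem:massey representations}(a), produces a continuous homomorphism $\gamma\colon G_F(p)\to\bar\dbU_4(\dbF_p)$ with superdiagonal $(\varphi_1,\varphi_2,\varphi_3)$; comparing the $(1,3)$- and $(2,4)$-entries in $\gamma(gh)=\gamma(g)\gamma(h)$ forces $\varphi_1\cup\varphi_2=\varphi_2\cup\varphi_3=0$ in $H^2(G_F(p))$. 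By Lemma \ref{lem:massey representations}(b), the goal becomes to lift $\gamma$ to a continuous homomorphism $\tilde\gamma\colon G_F(p)\to\dbU_4(\dbF_p)$.

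Next I would set up the obstruction. The kernel of $\dbU_4(\dbF_p)\to\bar\dbU_4(\dbF_p)$ is the central subgroup generated by $I_4+E_{1,4}$, a copy of $\dbF_p$, so the obstruction to lifting $\gamma$ is a class $o(\gamma)\in H^2(G_F(p))$, and as $\gamma$ runs over all homomorphisms to $\bar\dbU_4(\dbF_p)$ with the given superdiagonal — equivalently, over all defining systems of the Massey product — the classes $o(\gamma)$ sweep out precisely the set $\langle\varphi_1,\varphi_2,\varphi_3\rangle$, which (being non-empty) is a coset of $\varphi_1\cup H^1(G_F(p))+\varphi_3\cup H^1(G_F(p))$ by \cite{MinacTan16}*{Rem.\ 2.2}. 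Hence it is enough to exhibit one $\gamma$ that admits a lift $\tilde\gamma$.

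The construction of such a $\gamma$ is the crux, and is where the hypothesis $\mu_p\subseteq F$ really enters. Since $\varphi_1\cup\varphi_2=0$, inflating to the Brauer group shows the associated degree-$p$ cyclic algebra over $F$ is split, so classical cyclic-algebra theory gives $a_2=N_{F(\sqrt[p]{a_1})/F}(x)$ for some $x\in F(\sqrt[p]{a_1})^\times$; likewise $a_2=N_{F(\sqrt[p]{a_3})/F}(y)$. I would use these norm presentations to write down explicit $1$-cochains $\beta,\beta'$ on $G_F(p)$ whose coboundaries are the cocycles $\varphi_1\cup\varphi_2$ and $\varphi_2\cup\varphi_3$ respectively — equivalently, explicit $\dbU_3(\dbF_p)$-representations attached to the cyclic extensions $F(\sqrt[p]{a_1})/F$ and $F(\sqrt[p]{a_3})/F$ — and then amalgamate them into a candidate map $G_F(p)\to\dbU_4(\dbF_p)$. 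The one remaining datum is the $(1,4)$-entry, governed by the $2$-cocycle $\varphi_1\cup\beta'+\beta\cup\varphi_3$; the key point is that, because $x$ and $y$ both have norm the \emph{same} element $a_2$, the projection formula and norm--restriction identities in Galois cohomology force this cocycle to be a coboundary, so the lift $\tilde\gamma$ exists.

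I expect this last step to be the main obstacle: converting the bare vanishing $\varphi_i\cup\varphi_{i+1}=0$ into workable norm data and then verifying that the assembled $(1,4)$-cocycle is trivial is delicate arithmetic inside $F(\sqrt[p]{a_1})$ and $F(\sqrt[p]{a_3})$, and does not follow formally. In Matzri's treatment it is preceded by a reduction to fields over which these norm groups are more transparent, and for $p=2$ one must separately account for the identity $\chi_a\cup\chi_a=\Bock(\chi_a)$; either the uniform argument of \cite{EfratMatzri16} or, for $p=2$, the argument of \cite{MinacTan16b} carries the computation through. Granting it, the lift $\tilde\gamma$ exists, so $0\in\langle\varphi_1,\varphi_2,\varphi_3\rangle$ and the triple product is not essential.
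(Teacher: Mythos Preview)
The paper does not prove this theorem at all: it is quoted as a result from the literature, attributed to Matzri \cite{Matzri14} with the alternative treatments \cite{EfratMatzri16} and \cite{MinacTan16b}, and then used as a black box in \S\ref{section on Massey products}. So there is no ``paper's own proof'' to compare against; your proposal is in effect a sketch of the argument in the cited references.

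As such a sketch it is accurate in outline: the reduction to $\varphi_1\cup\varphi_2=\varphi_2\cup\varphi_3=0$, the translation via Lemma~\ref{lem:massey representations} into lifting a $\bar\dbU_4(\dbF_p)$-representation to $\dbU_4(\dbF_p)$, the identification of the Massey product set with the coset of obstruction classes, and the construction of an explicit defining system from norm presentations $a_2=N_{F(\sqrt[p]{a_1})/F}(x)=N_{F(\sqrt[p]{a_3})/F}(y)$ are all correct ingredients of the Matzri/Efrat--Matzri argument. You are also right that the genuine work lies in showing the assembled $(1,4)$-cocycle is a coboundary, and that this is handled differently in \cite{EfratMatzri16} (uniformly via the projection formula and a splitting-variety argument) and in \cite{MinacTan16b} (which works for arbitrary fields, not only those containing $\mu_p$). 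For the purposes of this paper, though, a citation is all that is expected.
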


We now turn again to the group $G$ with the defining relation $r$ of (\ref{special r}), and assume that $p^f>3$.

\begin{prop}
\label{prop:massey}
Let $\varphi_1,\varphi_2,\varphi_3\in H^1(G)$.
Then the triple Massey product $\langle\varphi_1,\varphi_2,\varphi_3\rangle$ is not essential.
\end{prop}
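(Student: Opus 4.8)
The goal is to show that any triple Massey product $\langle\varphi_1,\varphi_2,\varphi_3\rangle$ in $H^1(G)$, with $G=S/R$ and $R$ generated by the relation $r$ of \eqref{special r}, is not essential. By Proposition \ref{prop:cd2}(d), $H^2(G)$ is one-dimensional, so by Lemma \ref{essential 3MP} it suffices to show: whenever the product is non-empty and $\varphi_1\cup H^1(G)=\varphi_3\cup H^1(G)=\{0\}$, it actually contains $0$. By Proposition \ref{prop:cd2}(c) the condition $\varphi_i\cup H^1(G)=\{0\}$ forces $\varphi_1=a\chi_1$ and $\varphi_3=b\chi_1$ for scalars $a,b\in\dbF_p$. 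So I would reduce immediately to the case $\varphi_1=a\chi_1$, $\varphi_3=b\chi_1$, with $\varphi_2\in H^1(G)$ arbitrary, and non-emptiness assumed.

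\textbf{Translating to representations.} Using Lemma \ref{lem:massey representations}, non-emptiness gives a continuous homomorphism $\gamma\colon G\to\bar\dbU_4(\dbF_p)$ with $\gamma_{12}=a\chi_1$, $\gamma_{23}=\varphi_2$, $\gamma_{34}=b\chi_1$; I must lift it to $\gamma'\colon G\to\dbU_4(\dbF_p)$ with the same off-diagonal-adjacent entries. Equivalently, pulling back along $S\to G$, I have a homomorphism $\hat\gamma\colon S\to\bar\dbU_4(\dbF_p)$ killing $R$, and since $S$ is free I can lift it to $\hat\gamma'\colon S\to\dbU_4(\dbF_p)$ realizing the same adjacent entries; the only obstruction to descending $\hat\gamma'$ to $G$ is the single number $\hat\gamma'(r)_{14}\in\dbF_p$, the top-right corner of the image of the relator. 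The plan is to show this corner entry is $0$ for an appropriate choice of lift — more precisely, to show it lies in the image of the freedom we have in choosing the lift, which is a homomorphism $S\to Z(\dbU_4(\dbF_p))=\dbF_p$, i.e.\ a scalar multiple of any $\chi_k$. Since $r=x_1^{p^f}[x_2,x_3][x_4,x_5]\cdots[x_{d-1},x_d]$, I would compute $\hat\gamma'(r)_{14}$ using the standard cocycle formulas: the contribution of a power $x_1^{p^f}$ to the $(1,4)$-entry involves binomial coefficients $\binom{p^f}{2},\binom{p^f}{3}$ times products of the entries $\gamma_{12}(x_1),\gamma_{23}(x_1),\gamma_{34}(x_1)$, and since $\gamma_{12}=a\chi_1$, $\gamma_{34}=b\chi_1$ are supported on $x_1$ while these binomial coefficients vanish mod $p$ once $p^f>3$ (here the hypothesis $p^f>3$ enters decisively, handling $p=2,f\ge 2$ and $p=3,f\ge 2$ and all $p\ge 5$), this term drops out; the commutator blocks $[x_{2i},x_{2i+1}]$ contribute to the $(1,4)$-corner only through terms already controlled by the cup-product vanishing $\varphi_1\cup H^1=\varphi_3\cup H^1=0$, i.e.\ through $a\chi_1\cup(\,\cdot\,)$ and $b\chi_1\cup(\,\cdot\,)$, which are zero in $H^2(G)$.

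\textbf{The main obstacle.} The delicate point is the bookkeeping of the $(1,4)$-corner of $\hat\gamma'(r)$ as a sum of three kinds of terms — the ``pure power'' term from $x_1^{p^f}$, the ``mixed'' terms coupling the power block with the commutator blocks, and the ``pure commutator'' terms — and checking that each either vanishes identically by the binomial/degree hypothesis $p^f>3$ or is absorbed by the residual central twist or by the relations $\chi_1\cup H^1(G)=0$. I would organize this by recalling (e.g.\ from \cite{Dwyer75} or \cite{Efrat14}) the explicit formula expressing $\gamma'(g)_{1,n+1}$ in terms of the ``Massey value'' $\langle\gamma_{12},\ldots\rangle$ evaluated against $g$, so that $\hat\gamma'(r)_{14}$ becomes literally the value on $\bar r\in R/R^p[S,R]$ of a representative of $\langle a\chi_1,\varphi_2,b\chi_1\rangle$ under the pairing $(\cdot,\cdot)'_R$ of Section \ref{section on Massey products}; then the claim is exactly that some representative pairs to zero with $\bar r$, which by non-degeneracy of $(\cdot,\cdot)'_R$ and one-dimensionality of $H^2(G)$ amounts to the representative being $0$ in $H^2(G)$. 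Concretely I expect $\langle a\chi_1,\varphi_2,b\chi_1\rangle$ to be computed as $ab$ times a term built from $\chi_1\cup(\text{something})$ plus a Bockstein-type contribution $ab\,\binom{p^f}{2}\,(\text{scalar})\,\Bock_{G,p}(\chi_1)$-flavored piece, all of which vanish by Proposition \ref{prop:cd2}(a),(c) together with $p^f>3$. The hard part is purely the careful extraction of these three contributions and confirming no stray $(1,4)$-term survives; everything after that is formal via Lemmas \ref{lem:massey representations}, \ref{essential 3MP} and Proposition \ref{prop:cd2}.
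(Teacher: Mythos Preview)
Your reduction via Lemma~\ref{essential 3MP} and Proposition~\ref{prop:cd2}(c) to the case $\varphi_1=a\chi_1$, $\varphi_3=b\chi_1$ is exactly what the paper does, and your ultimate computation would succeed. However, the paper takes a much shorter path after the reduction, and two aspects of your plan are unnecessary detours.

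First, you start from the non-emptiness datum $\gamma\colon G\to\bar\dbU_4(\dbF_p)$ and try to lift \emph{it}. The paper does not use non-emptiness at all: by Lemma~\ref{lem:massey representations}(b) one only needs \emph{some} homomorphism $G\to\dbU_4(\dbF_p)$ with the prescribed superdiagonal, not a lift of the given $\gamma$. So the paper simply writes down the obvious $\hat\gamma\colon S\to\dbU_4(\dbF_p)$ sending each $x_k$ to the matrix with $\varphi_1(x_k),\varphi_2(x_k),\varphi_3(x_k)$ on the superdiagonal and zeros elsewhere, and checks that it kills $r$. Your remark that the ``freedom in choosing the lift is a homomorphism $S\to Z(\dbU_4(\dbF_p))=\dbF_p$'' is in fact a red herring: since $r\in\Frat(S)$, any such central twist evaluates to $0$ on $r$, so this freedom contributes nothing and you would need $\hat\gamma'(r)_{14}=0$ outright anyway.

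Second, your bookkeeping via binomial coefficients, Bockstein terms, and the pairing $(\cdot,\cdot)'_R$ is replaced in the paper by two one-line observations. For the commutator blocks: since $\chi_1(x_j)=0$ for $j\geq 2$, the matrices $\hat\gamma(x_2),\ldots,\hat\gamma(x_d)$ lie in the abelian subgroup $\{I+cE_{2,3}:c\in\dbF_p\}$ of $\dbU_4(\dbF_p)$, so $\hat\gamma([x_{2i},x_{2i+1}])=I$ immediately. (In your more general setup the same works: the matrices with vanishing $(1,2)$ and $(3,4)$ entries form an abelian subgroup, so the commutators die for \emph{any} lift.) For the power: writing $\hat\gamma(x_1)=I+N$ with $N$ strictly upper-triangular, one has $(I+N)^{p^f}=I+N^{p^f}$ in characteristic $p$, and $N^{p^f}=0$ since $N^4=0$ and $p^f>3$. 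Thus $\hat\gamma(r)=I$ and $\hat\gamma$ descends to $G$.
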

\begin{proof}
As $\dim_{\dbF_p}H^2(G)=1$ (Proposition \ref{prop:cd2}(d)), and in view of Lemma \ref{essential 3MP}, we may assume that $\langle\varphi_1,\varphi_2,\varphi_3\rangle$ contains exactly one element and
$\varphi_1\cup H^1(G)=\varphi_3\cup H^1(G)=0$.
By Proposition \ref{prop:cd2}(c), $\varphi_1=a\chi_1,$ $\varphi_3=b\chi_1$ for some $a,b\in\dbF_p$.

Identifying $\varphi_1,\varphi_2,\varphi_3$ also as elements of $H^1(S)$, we define a continuous homomorphism $\hat\gamma\colon S\to\dbU_4(\dbF_p)$ by
\[
\hat\gamma(x) =
\begin{bmatrix}
1 & \varphi_1(x) & 0 & 0 \\ 0 & 1 & \varphi_2(x) & 0  \\
 0&0&1&\varphi_3(x) \\ 0&0&0&1 \end{bmatrix}.
 \]
Thus  $\hat\gamma(x_i) = I_4+\varphi_2(x_i)E_{2,3}$ for $2\leq i\leq d$.
Note that the map $\lambda\colon \dbF_p\to\dbU_4(\dbF_p)$,  $a\mapsto I_4+aE_{2,3}$, is a group homomorphism,
and there is a commutative square
\[
\xymatrix{
\langle x_2,\ldots,x_d\rangle\ar[r]^{\hat\gamma}\ar@{^{(}->}[d] &\dbU_4(\dbF_p)\\
G\ar[r]^{\varphi_2}&\dbF_p\ar[u]_{\lambda}.
}
\]
Since $\dbF_p$ is abelian, $\hat\gamma([x_i,x_{i+1}])= I_4$ for every $2\leq i<d$.
Also, since the characteristic is $p$, we have $\hat\gam(x_1^{p^f})=I_4+(\hat\gam(x_1)-I_4)^{p^f}$.
Since $p^f>3$, this gives $\hat\gam(x_1^{p^f})=I_4$, and therefore $\hat\gamma(r)=I_4$.
Thus $\hat\gamma$ induces a continuous homomorphism $\gamma\colon G\to\dbU_4(\dbF_p)$.
By Lemma \ref{lem:massey representations}(b), $\langle\varphi_1,\varphi_2,\varphi_3\rangle=\{0\}$.
\end{proof}

\begin{rem}
\rm
When $p=3$, the assumption in Proposition \ref{prop:massey} that $f\geq2$ cannot be omitted.
Indeed,  $\langle\chi_1,\chi_1,\chi_1\rangle=\{-\Bock_{G,3}(\chi_1)\}$ \cite{Vogel04}*{Prop.\ 1.2.15}.
When $f=1$ \cite{NeukirchSchmidtWingberg}*{Prop. 3.9.14} shows that $(\bar r,\Bock_{G,3}(\chi_1))'_R=1\in\dbF_3$.
This implies that $\Bock_{G,3}(\chi_1)\neq0$, so $\langle\chi_1,\chi_1,\chi_1\rangle$ is essential.
\end{rem}

\begin{bibdiv}
\begin{biblist}

\bib{BarySorJardenNeftin}{article}{
   author={Bary-Soroker, Lior},
   author={Jarden, Moshe},
   author={Neftin, Danny},
   title={The Sylow subgroups of the absolute Galois group ${\rm Gal}(\Bbb{Q})$},
   journal={Adv. Math.},
   volume={284},
   date={2015},
   pages={186--212},
}

\bib{Becker74}{article}{
   author={Becker, Eberhard},
   title={Euklidische K\"orper und euklidische H\"ullen von K\"orpern},
   journal={J. Reine Angew. Math.},
   volume={268/269},
   date={1974},
   pages={41--52},
}

\bib{Bogomolov95}{article}{
author={Bogomolov, F.},
title={One the structure of Galois groups of the fields of rational functions},
journal={Proc.\ Symp.\ Pure Math.},
volume={58},
date={1995},
pages={83\ndash 88},
}

\bib{CheboluEfratMinac12}{article}{
   author={Chebolu, Sunil K.},
   author={Efrat, Ido},
   author={Min{\'a}{\v{c}}, J{\'a}n},
   title={Quotients of absolute Galois groups which determine the entire Galois cohomology},
   journal={Math. Ann.},
   volume={352},
   date={2012},
   pages={205--221},
}

\bib{CheboluMinacQuadrelli15}{article}{
  author={Chebolu, S. K.},
  author={Min{\'a}{\v{c}}, J.},
  author={Quadrelli, C.},
  title={Detecting fast solvability of equations via small powerful Galois  groups},
  journal={Trans. Amer. Math. Soc.},
  volume={367},
  date={2015},
  pages={8439--8464},
}

\bib{Demushkin61}{article}{
   author={Demu{\v{s}}kin, S. P.},
   title={The group of a maximal $p$-extension of a local field},
   journal={Izv. Akad. Nauk SSSR Ser. Mat.},
   volume={25},
   date={1961},
   pages={329--346},
}

\bib{Dwyer75}{article}{
   author={Dwyer, William G.},
   title={Homology, Massey products and maps between groups},
   journal={J. Pure Appl. Algebra},
   volume={6},
   date={1975},
   pages={177--190},
}

\bib{Efrat95}{article}{
   author={Efrat, Ido},
   title={Orderings, valuations, and free products of Galois groups},
   journal={Sem.\ Structures Alg\'ebriques Ordonn\'ees, Univ. Paris VII},
   volume={54},
   date={1995},
}

\bib{Efrat98}{article}{
   author={Efrat, Ido},
   title={Small maximal pro-$p$ Galois groups},
   journal={Manuscripta Math.},
   volume={95},
   date={1998},
   pages={237\ndash249},
}

\bib{Efrat03}{article}{
author={Efrat, Ido},
title={Demu\v skin fields with valuations},
journal={Math.\ Z.},
volume={243},
pages={333\ndash353},
date={2003},
}

\bib{Efrat14}{article}{
   author={Efrat, Ido},
   title={The Zassenhaus filtration, Massey products, and representations of profinite groups},
   journal={Adv. Math.},
   volume={263},
   date={2014},
   pages={389--411},
}

\bib{EfratMatzri16}{article}{
   author={Efrat, Ido},
   author={Matzri, Eliyahu},
   title={Triple Massey products and absolute Galois groups},
   journal={J. Eur. Math. Soc. (JEMS)},
   volume={19},
   date={2017},
   pages={3629--3640},
}

\bib{EfratMinac11}{article}{
   author={Efrat, Ido},
   author={Min{\'a}{\v{c}}, J{\'a}n},
   title={On the descending central sequence of absolute Galois groups},
   journal={Amer. J. Math.},
   volume={133},
   date={2011},
   pages={1503--1532},
    label={EMi11}
}

\bib{EfratMinac16}{article}{
   author={Efrat, Ido},
   author={Min\'a\v c, J\'an},
   title={Galois groups and cohomological functors},
   journal={Trans. Amer. Math. Soc.},
   volume={369},
   date={2017},
   pages={2697--2720},
    label={EMi17}
}

\bib{JacobWare89}{article}{
   author={Jacob, Bill},
   author={Ware, Roger},
   title={A recursive description of the maximal pro-$2$ Galois group via Witt rings},
   journal={Math. Z.},
   volume={200},
   date={1989},
   pages={379--396},
}

\bib{KochloukovaZalesskii05}{article}{
   author={Kochloukova, Dessislava H.},
   author={Zalesskii, Pavel},
   title={Free-by-Demushkin pro-$p$ groups},
   journal={Math. Z.},
   volume={249},
   date={2005},
   pages={731--739},
  }

\bib{Kraines66}{article}{
author={Kraines, David},
title={Massey higher products},
journal={Trans.\  Amer.\  Math.\ Soc.},
volume={124},
date={1966},
pages={431\ndash449},
}

\bib{Labute67a}{article}{
   author={Labute, John P.},
   title={Classification of Demushkin groups},
   journal={Canad. J. Math.},
   volume={19},
   date={1967},
   pages={106--132},
}

\bib{Labute67b}{article}{
   author={Labute, John P.},
   title={Alg\`ebres de Lie et pro-$p$-groupes d\'efinis par une seule  relation},
   journal={Invent. Math.},
   volume={4},
   date={1967},
   pages={142--158},
}

\bib{Lang02}{book}{
author={Lang, Serge},
title={Algebra, Revised Third Edition},
publisher={Springer Verlag},
date={2002},
}

\bib{Matzri14}{article}{
author={Matzri, Eliyahu},
title={Triple Massey products in Galois cohomology},
eprint={arXiv:1411.4146},
date={2014},
}

\bib{MinacRogelstadTan18}{article}{
   author={Min{\'a}{\v{c}}, J{\'a}n},
   author={Rogelstad, Michael},
   author={T{\^a}n, Nguy{\^e}n Duy},
   title={Relations in the maximal pro-$p$ quotients of absolute Galois groups},
   eprint={arXiv:1808.01705},
   date={2018},
}

\bib{MinacTan16b}{article}{
   author={Min{\'a}{\v{c}}, J{\'a}n},
   author={T{\^a}n, Nguy{\^e}n Duy},
   title={Triple Massey products vanish over all fields},
   journal={J. London Math. Soc.},
   volume={94},
   date={2016},
   number={2},
   pages={909--932}
}

\bib{MinacTan16}{article}{
   author={Min{\'a}{\v{c}}, J{\'a}n},
   author={T{\^a}n, Nguy{\^e}n Duy},
   title={Triple Massey products and Galois theory},
   journal={J. Eur. Math. Soc.},
   volume={19},
   date={2017},
   pages={255--284},
}

\bib{Morishita04}{article}{
author={Morishita, M.},
title={Milnor invariants and Massey products for prime numbers},
journal={Compos.\ Math.},
volume={140},
date={2004},
pages={69\ndash83},
}

\bib{NeukirchSchmidtWingberg}{book}{
   author={Neukirch, J{\"u}rgen},
   author={Schmidt, Alexander},
   author={Wingberg, Kay},
   title={Cohomology of number fields},
   edition={2},
   publisher={Springer-Verlag},
   place={Berlin},
   date={2008},
   pages={xvi+825},
}

\bib{Positselski05}{article}{
author={Positselski, Leonid},
title={Koszul property and Bogomolov's conjecture},
journal={Int.\ Math.\ Res.\ Notices},
volume={31},
date={2005},
pages={1901\ndash1936},
}

\bib{Quadrelli14}{article}{
   author={Quadrelli, Claudio},
   title={Bloch-Kato pro-$p$ groups and locally powerful groups},
   journal={Forum Math.},
   volume={26},
   date={2014},
   pages={793\ndash814},
}

\bib{QuadrelliWeigel}{article}{
   author={Quadrelli, Claudio},
   author={Weigel, Thomas},
   title={Profinite groups with a cyclotomic $p$-orientation},
   eprint={arXiv:1811.02250},
   date={2018},
}

\bib{Rogelstad15}{thesis}{
author={Rogelstad, Michael},
title={Combinatorial techniques in the Galois theory of $p$-extensions},
type={Ph.D.\ Thesis},
place={University of Western Ontario},
date={2015},
}

\bib{Serre63}{article}{
   author={Serre, Jean-Pierre},
   title={Structure de certains pro-$p$-groupes (d'apr\`es Demu\v skin)},
   conference={
      title={S\'eminaire Bourbaki (1962/63), Exp.\ 252},
   },
   label={Ser63},
}

\bib{Schimdt10}{article}{
   author={Schmidt, Alexander},
   title={\"Uber pro-$p$-fundamentalgruppen markierter arithmetischer Kurven},
   journal={J. Reine Angew. Math.},
   volume={640},
   date={2010},
   pages={203\ndash235},
}

\bib{Sharifi07}{article}{
   author={Sharifi, Romyar T.},
   title={Massey products and ideal class groups},
   journal={J. Reine Angew. Math.},
   volume={603},
   date={2007},
   pages={1--33},
}

\bib{Vogel04}{thesis}{
author={Vogel, Denis},
title={Massey products in the Galois cohomology of number fields},
type={Ph.D.\ thesis},
place={Universit\"at Heidelberg},
date={2004},
}

\bib{Ware92}{article}{
   author={Ware, Roger},
   title={Galois groups of maximal $p$-extensions},
   journal={Trans. Amer. Math. Soc.},
   volume={333},
   date={1992},
   pages={721--728},
   issn={0002-9947},
}

\bib{Wickelgren12a}{article}{
   author={Wickelgren, Kirsten},
   title={On 3-nilpotent obstructions to $\pi_1$ sections for $\mathbb{P}^1_\mathbb{Q}-\{0,1,\infty\}$},
   conference={
      title={The arithmetic of fundamental groups---PIA 2010},
   },
   book={
      series={Contrib. Math. Comput. Sci.},
      volume={2},
      publisher={Springer, Heidelberg},
   },
   date={2012},
   pages={281--328},
}
		
\bib{Wickelgren12b}{article}{
   author={Wickelgren, Kirsten},
   title={$n$-nilpotent obstructions to $\pi_1$ sections of $\mathbb{P}^1-\{0,1,\infty\}$ and Massey products},
   conference={
      title={Galois-Teichm\"uller theory and arithmetic geometry},
   },
   book={
      series={Adv. Stud. Pure Math.},
      volume={63},
      publisher={Math. Soc. Japan, Tokyo},
   },
   date={2012},
   pages={579--600},
}

\end{biblist}
\end{bibdiv}
\end{document}